\newtheorem{theorem}{Theorem}[section]
\newtheorem{cor}[theorem]{Corollary}
\newtheorem{lem}[theorem]{Lemma}
\newtheorem{prop}[theorem]{Proposition}
\theoremstyle{definition}
\newtheorem{example}[theorem]{Example}
\newtheorem{defi}[theorem]{Definition}
\newtheorem{rem}[theorem]{Remark}
\newcommand{\Alg}{\mathsf{dgAlg}}
\newcommand{\Mod}{\mathsf{Mod-\!}}
\newcommand{\D}{\mathsf{D}}
\newcommand\Perf{\mathsf{Perf}}
\numberwithin{equation}{section}
\DeclareMathOperator{\NRk}{NRk}
\DeclareMathOperator{\cone}{cone}
\DeclareMathOperator{\BQ}{\mathbb Q}
\DeclareMathOperator{\BZ}{\mathbb Z}
\DeclareMathOperator{\BR}{\mathbb R}
\DeclareMathOperator{\proj}{\mathsf{Proj}}
\DeclareMathOperator{\fp}{fp}
\DeclareMathOperator{\Hom}{Hom}
\DeclareMathOperator{\Map}{Map}
\DeclareMathOperator{\RHom}{RHom}
\DeclareMathOperator{\End}{End}
\DeclareMathOperator{\REnd}{REnd}
\DeclareMathOperator{\Ker}{Ker}
\DeclareMathOperator{\Loc}{Loc}
\DeclareMathOperator{\Spec}{Spec}
\newcommand{\noproof}{\begin{flushright} \ensuremath{\square}
\end{flushright}}
\def\ground{\mathbf{k}}
\def\C{\mathcal{C}}
\def\Z{\mathbb{Z}}
\def\R{\mathbb{R}}
\def\id{\operatorname{id}}
\begin{document}
\begin{abstract}We introduce the notion of a rank function on a triangulated category $\C$ which generalizes the Sylvester
rank function in the case when $\C=\Perf(A)$ is the perfect derived category of a ring $A$. We show that rank functions are closely related
to functors  into simple triangulated categories and classify
Verdier quotients into simple triangulated categories in terms of particular rank functions called \emph{localizing}. 
If $\C=\Perf(A)$ as above, localizing rank functions also classify 
finite homological epimorphisms from $A$ into differential graded skew-fields or, more generally, differential graded Artinian rings. To establish these results, we develop the theory of derived localization of differential graded algebras at thick subcategories of their perfect derived categories. This is a far-reaching generalization of Cohn's matrix localization of rings and has independent interest.
	\end{abstract}
\title[Derived rank functions]{Rank functions on triangulated categories}
\author{J. Chuang  \and A.~Lazarev}
\thanks{This work was partially supported by EPSRC grants EP/T029455/1 and  	
EP/T030771/1}
\address{Department of Mathematics\\City, University of London\\London
EC1V 0HB\\UK}
\email{J.Chuang@city.ac.uk}
\address{University of Lancaster\\ Department of
Mathematics and Statistics\\Lancaster, LA1 4YF, UK.}
\email{a.lazarev@lancaster.ac.uk} \keywords{Rank function, triangulated category, derived localization, differential graded algebra} 
\subjclass{18G80, 18N55, 16E45}
\maketitle
\tableofcontents
\section{Introduction}
The dimension of a vector space $V$ over a (possibly skew-)field $K$ is a basic characteristic of $V$ and it is an elementary fact that
it is an invariant, i.e. does not depend on the choice of a basis in $V$. However, further generalizations with $K$ replaced with a noncommutative ring $A$,
and $V$ replaced with an $A$-module, are not straightforward. Indeed, there are examples of rings such that their free modules do not have
a well-defined dimension, or rank. One possibility to obviate this difficulty is to start with a homomorphism $f:A\to K$ where $K$ is a skew-field (which allows one to associate to an $A$-module a $K$-module by tensoring up) and define the rank of an $A$-module as the rank of the corresponding $K$-module. Different homomorphisms $f$ give rise to possibly different ranks. This suggests that ranks are closely related to homomorphisms into skew-fields. This was made precise
by Cohn and Schofield \cite{Cohn95, Sch86} by showing that maps $f$ as above are in one-to-one correspondence with certain functions, called Sylvester rank functions,
on finitely presented $A$-modules, defined intrinsically. 

A crucial part of the Cohn-Schofield theory is the method of \emph{matrix localization}. Given a ring $A$ and a matrix $M$ with entries in $A$, there exists another ring $A[M^{-1}]$ supplied with a map $A\to A[M^{-1}]$ such that the matrix $M$ becomes invertible over $A[M^{-1}]$; moreover $A[M^{-1}]$ is universal in the sense that any other ring having this property factorizes uniquely through $A[M^{-1}]$. If $M$ is a $1\times 1$ matrix i.e. an element $s\in A$, then $A[M^{-1}]$ reduces to $A[s^{-1}]$, the usual localization of $A$ at $s$. Furthermore, if $A$ is a commutative ring, then general matrix localization reduces to inverting a single element, namely the determinant of $M$; however in the general noncommutative case no such reduction is possible.

Our original motivation was to rework the Cohn-Schofield theory in a way intrinsic to the derived category of $A$. To this end, we formulate
the notion of a rank function on an arbitrary triangulated category $\C$. Compared to the Sylvester rank function, our definition is simpler and, arguably, more natural. In the case $\C=\Perf(A)$ for a ring $A$, it subsumes that of the Sylvester rank function but \emph{does not reduce to it}. The `exotic' rank functions on $\Perf(A)$ (i.e. those that are not equivalent to Sylvester rank functions) are related to maps from $A$ into \emph{graded} skew-fields or \emph{graded} simple Artinian rings in the \emph{homotopy category of differential graded (dg) rings}.  Recall that  maps in the homotopy category of dg rings are computed by replacing the source with a cofibrant dg ring. This needs to be done even if the source is an ordinary ring. In other words, such maps are invisible on the classical level.

Our notion of a rank function on $\C$ appears close, albeit certainly not equivalent, to the notion of \emph{cohomological functions} on $\C$ in the sense of Krause \cite{Krause16}. The precise relationship between the two notions is unclear at the moment and we hope to return to this question in future.

Apart from the purely algebraic motivation described above, Sylvester rank functions are of great relevance to geometric group theory and, in particular, to various versions of the Atiyah conjecture, cf. \cite{Jaikin19} for a survey of recent results in this direction. A natural question, that we also leave for future investigation, is whether our notion of a rank functions can be usefully exploited in that context.

One unexpected source of rank functions on triangulated categories turns out to be Bridgeland's `stability conditions' \cite{Bridgeland07}. Namely, it turns out that there is a continuous map from the space of stability conditions to that of rank functions
on the same triangulated category. Here we limit ourselves with merely recording this observation but it undoubtedly deserves further study.

Next, we need to develop an analogue of matrix localization in a homotopy invariant way. Even when one is interested in inverting 
only one element in a ring, i.e. a $1\times 1$ matrix, care is needed since this operation is not exact in the noncommutative context.
The notion of derived localization (in this restricted sense) was developed in the previous work of the authors \cite{BCL}. In the present paper we build on this work to construct derived localization of a dg ring $A$ with respect to an arbitrary thick subcategory of $\Perf(A)$. This extends the notion of matrix localization since the latter is the nonderived version of the localization with respect to the thick subcategory generated by a collection of free complexes of length 2.  In contrast with inverting  a single element, even for commutative rings general derived localization may have nontrivial derived terms. However, for hereditary algebras, derived localization reduces to the non-derived notion.

A rank function $\rho$ on a triangulated category $\C$ has a kernel $\Ker(\rho)$, the thick subcategory of $\C$ consisting of objects of rank zero. We describe those rank  functions for which the Verdier quotient $\C/\Ker(\rho)$ is simple i.e. equivalent to the perfect derived category of a graded skew-field. These are the so-called \emph{localizing} rank functions. It turns out that localizing rank functions classify homotopy classes of homological epimorphisms from dg rings into dg fields and dg simple Artinian rings. We call such homological epimorphisms \emph{derived fraction fields}. Strikingly, ordinary rings
(even finite dimensional algebras over fields) have nontrivial derived fraction fields.

There are, of course, exact functors from a given triangulated category $\C$ to a simple one that are not Verdier quotients; these induce (non-localizing) rank functions on $\C$. It is an open question to which extent such functors
 are captured by rank functions.  

The paper is organized as follows. In Section \ref{section:rankdef} the notion of a rank function on a triangulated category is introduced, in two equivalent ways: as a function on objects or on morphisms satisfying appropriate conditions, as well as its refinement, a $d$-periodic rank function where $d=1,2,\ldots,\infty$ (an ordinary rank function is then 1-periodic). We show how rank functions can be constructed using functors into simple triangulated categories and (very briefly) stability conditions.

In Section \ref{section:Sylvester} we prove that $\infty$-periodic rank functions on the perfect derived category of ordinary rings
are nothing other than Sylvester rank function whereas Section \ref{section:further} establishes various further properties of rank functions reminiscent of those for Sylvester rank functions.

Section \ref{section:localization} develops the theory of derived localization for dg algebras at thick subcategories of their perfect derived categories and compares it with the non-derived notion. This section has an independent interest and can be read independently of the rest of the paper. Finally Section \ref{section:localizing} introduces the notion of a localizing rank function and shows that they describe Verdier quotients into simple triangulated categories and
homotopy classes of homological epimorphisms from dg algebras into dg fields and dg simple Artinian rings.
\subsection{Acknowledgement} We are grateful to Marc Stefan and the anonymous referee for pointing out various inaccuracies and a host of useful comments.
\subsection{Notation and conventions}Throughout this paper we work with homologically graded chain complexes over a fixed unital commutative ring $\ground$. Unadorned tensor products and Homs will be assumed to be taken over $\ground$. For a chain complex  $A$ 
 we denote by $\Sigma A$ its suspension given by 
$(\Sigma A)_i = A_{i-1}$. The signs $\simeq$ and $\cong$ will stand for quasi-isomorphisms and isomorphisms of chain complexes respectively.

We will normally use the abbreviation `dg' for `differential graded'. By `dg algebra' we will mean `dg associative unital algebra' over $\ground$. The category of dg algebras $\Alg$ has the structure of a closed model category (with weak equivalences being multiplicative quasi-isomorphisms) and so it makes sense to talk about homotopy classes of maps between dg algebras. A given dg algebra $B$ together with a dg algebra map $A\to B$ (not necessarily central) will be referred to as an $A$-algebra. The category $A\downarrow\Alg$ of $A$-algebras is likewise a closed model category. The category of right dg $A$-modules over a dg algebra $A$ will be denoted by $\Mod A$ and we will refer to its objects as $A$-modules. The category $\Mod A$ is a closed model category whose homotopy category is denoted by $\D(A)$, the \emph{derived category} of $A$. We chose to focus (for notational convenience) on right modules but will occasionally use left modules as well, making sure that no confusion would arise. 

For $A$-modules $M,N$ we denote by $\Hom_A(M,N)$ the chain complex of $A$-linear homomorphisms $M\to N$.  We write $\RHom_A(M,N)$ for the corresponding derived functor obtained by
replacing $M$ with a cofibrant $A$-module quasi-isomorphic to $M$. If $M=N$ we will write $\End_A(M)$ and $\REnd_A(M)$ for $\Hom_A(M,N)$ and $\RHom_A(M,N)$ respectively.

For a (right) $A$-module $M$ and a left $A$-module $N$ we write $M\otimes_AN$ and $M\otimes_A^L N$ for their tensor product and derived tensor product respectively. For two dg algebras $B$ and $C$ supplied with a dg algebra maps $A\to B$ and $A\to C$ their \emph{free product} or \emph{pushout} will be denoted by $B*_AC$, and its derived version -- by $B*^L_AC$.

A description of the closed model categories of algebras and modules convenient for our purposes is given in \cite{BCL}, except for the minor difference that \emph{left} modules are treated in that paper whereas \emph{right} modules are emphasized here. 

We will freely use the language of triangulated categories and their localizations, cf. \cite{Krause10loc} for an overview. If $\C$ is a triangulated category with translation functor $\Sigma$ and $\Sigma^d\cong\id$ we say that $\C$ has period $d$; if no such $d$ exists $\C$ is said to have infinite period. Given a triangulated category $\C$, its full triangulated subcategory $S$ is \emph{thick} if it is closed with respect to taking retracts. In this situation one can form the \emph{Verdier quotient} $\C/S$, a triangulated category supplied with a (triangulated) functor $j:\C\to \C/S$ whose kernel is $S$ and universal with respect to this property. A triangulated subcategory $S$ of $\C$ is called \emph{localizing} if it contains all small coproducts; in that case the Verdier quotient often admits a right adjoint $i:\C/S\to\C$, and then the endofunctor $L:=i\circ j:\C\to C$ is called the (Bousfield) localization functor with respect to $S$. It is necessarily idempotent: $L^2\cong L$ and for any $X\in \C$ the natural map $X\to L(X)$ is called the \emph{localization} of $X$ (with respect to $S$).  For a collection of objects $S$ of $\C$ we will denote by $\Loc(S)$ the smallest localizing subcategory of $\C$ containing $S$; we say that $S$ generates $\Loc(S)$.

We say that an object $X$ is a (classical) generator of the triangulated category $\C$ if $\C$ is the smallest thick subcategory of $\C$ containing $X$.

For a triangulated category $C$ a \emph{perfect} (or compact) object $X$ is characterized by the property 
\[\Hom_{\C}(X,\oplus_{i\in I} X_i)\cong \oplus_{i\in I}\Hom_{\C}(X,X_i)\] for any collection of $X_i,i\in I$ of $A$-modules indexed by a set $I$. The full subcategory of $\D(A)$ consisting of perfect $A$-modules will be denoted by $\Perf(A)$; note that $A$ is a generator of $\Perf(A)$. 

A \emph{dg category} is understood to be a category enriched over dg $\ground$-modules. Thus, for two objects $X_1$ and $X_2$ in a dg category $C$ we have a dg space of homomorphisms ${\Hom}(X_1,X_2)$ and composition is a dg map. The homotopy category $H_0(C)$ of the dg category $C$ has the same objects as $C$ and for two objects $X_1, X_2$ in $C$ we have $\Hom_{H_0(C)}(X_1,X_2):=H_0[{\Hom}_C(X_1,X_2)]$.

A dg functor $F:C\to C^\prime$ between two dg categories is \emph{quasi-essentially surjective} if $H_0(F):H_0(C)\to H_0(C^\prime)$ is essentially surjective and \emph{quasi-fully faithful} if $F$ induces quasi-isomorphisms on the $\Hom$-spaces; if both conditions are satisfied then $F$ is called a \emph{quasi-equivalence}. 

\section{Rank functions on triangulated categories}\label{section:rankdef}
Let $\C$ be a triangulated category. 
\begin{defi}\label{def:objectrank1per} A {\em rank function} on $\C$ is an assignment to each object $X$ of $\C$ of a nonnegative real number $\rho(X)$, such that the following conditions hold.
	\begin{enumerate}
		\item[] {\bf Translation invariance:} for any object $X$, we have
		\begin{equation}\tag{O1}\label{O1}\rho(\Sigma X)= \rho(X);\end{equation}
		\item[] {\bf Additivity:} for any objects $X$ and $Y$, we have
		\begin{equation}\tag{O2}\label{O2}\rho(X\oplus Y)=\rho(X)+\rho(Y);\end{equation}
		\item[] {\bf Triangle inequality:} for any exact triangle $X \to Y \to Z \rightsquigarrow$,
		we have 
		\begin{equation}\tag{O3}\label{O3}\rho(Y)\leq \rho(X) +\rho(Z)\end{equation}
		\end{enumerate}	
\end{defi}

\begin{rem} Conditions (\ref{O1}) and (\ref{O3}) may be replaced by the statement that for any exact triangle $X\to Y \to Z \rightsquigarrow$, the triple  $(\rho(X), \rho(Y), \rho(Z))$ is triangular, i.e. it is is composed of the side lengths of a (possibly degenerate) planar triangle. Indeed, one only needs to show that the latter condition together with  (\ref{O2}) implies (\ref{O1}).  Consider the exact triangle  $X \xrightarrow{0} X \to X\oplus \Sigma X \rightsquigarrow$. Then the triangularity condition together with (\ref{O2}) implies that $$\rho(\Sigma X\oplus X)=\rho(\Sigma X)+\rho(X)\leq 2\rho(X)$$ so that $\rho (\Sigma X)\leq \rho(X)$. Similarly the  exact triangle $\Sigma X\oplus X\to \Sigma X\xrightarrow{0}\Sigma X\rightsquigarrow$ implies that  $\rho (X)\leq \rho(\Sigma X)$ and it follows that $\rho(X)=\rho(\Sigma X)$.
\end{rem}	 

Rank functions on triangulated categories may alternatively be defined as functions on morphisms, as follows.

\begin{defi}\label{def:morphismrank1per}
	A  {rank function} on $\C$ is an assignment to each morphism $f$ in $\C$ of a  nonnegative real number $\rho(f)$, such that the following conditions hold.	
	\begin{itemize}
		\item[]{\bf Translation invariance:} for any morphism $f$, we have
		\begin{equation}\tag{M1}\label{M1}\rho(\Sigma f)= \rho(f);\end{equation}
		\item[] {\bf Additivity:} for any morphisms $f$ and $g$, we have
		\begin{equation}\tag{M2}\label{M2}\rho(f\oplus g)=\rho(f)+\rho(g);\end{equation}
		\item[] {\bf Rank-nullity condition:} for any exact triangle $X \xrightarrow{f} Y \xrightarrow{g} Z \rightsquigarrow,$
		we have 
		\begin{equation}\tag{M3}\label{M3}\rho(f)+\rho(g)=\rho(\id_Y);\end{equation}
	\end{itemize} 
\end{defi}	
The translation between the two definitions is given by the following formulae:
\begin{equation}\phantomsection\label{eq:morphismtoobject}
\rho(X)=\rho(\id_X)
\end{equation}
\begin{equation}\label{eq:objecttomorphism}
\rho(f:X\to Y) = \frac{\rho(X)+\rho(Y) - \rho(\cone(f))}{2}.
\end{equation}

\begin{prop}\label{prop:equivalence} Definitions \ref{def:morphismrank1per} and \ref{def:objectrank1per} are equivalent.
	\end{prop}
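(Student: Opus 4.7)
The plan is to verify that formulas (\ref{eq:morphismtoobject}) and (\ref{eq:objecttomorphism}) define mutually inverse bijections between the two types of rank functions.

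For the direction from an object rank function $\rho$ satisfying (O1)--(O3) to a morphism rank function via (\ref{eq:objecttomorphism}), I would first check nonnegativity of $\rho(f)$ by applying (O3) to the rotated triangle $Y\to\cone(f)\to\Sigma X$, which yields $\rho(\cone(f))\leq\rho(X)+\rho(Y)$. Axiom (M1) follows from $\cone(\Sigma f)\cong\Sigma\cone(f)$ combined with (O1); (M2) from $\cone(f\oplus g)\cong\cone(f)\oplus\cone(g)$ combined with (O2); and (M3) from a direct telescoping calculation using $\cone(g)\cong\Sigma X$. Conversely, starting from a morphism rank function $\rho$ and setting $\rho(X):=\rho(\id_X)$, axioms (O1) and (O2) are immediate from (M1), (M2) and the identifications $\id_{\Sigma X}=\Sigma\id_X$, $\id_{X\oplus Y}=\id_X\oplus\id_Y$. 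For (O3), (M3) gives $\rho(Y)=\rho(f)+\rho(g)$, and applying (M3) to the backward rotation $\Sigma^{-1}Z\to X\xrightarrow{f}Y$ together with nonnegativity bounds $\rho(f)\leq\rho(X)$ and, similarly, $\rho(g)\leq\rho(Z)$.

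Verifying the constructions are mutually inverse is routine in one direction, since $\rho_{mor}(\id_X)=\frac{2\rho(X)-\rho(0)}{2}=\rho(X)$, where $\rho(0)=0$ follows from (O2) applied to $0\cong 0\oplus 0$. For the other direction, applying (M3) to three consecutive rotations of an exact triangle $X\xrightarrow{f}Y\xrightarrow{g}Z\xrightarrow{h}\Sigma X$, and using (M1) to rewrite $\rho(-\Sigma f)=\rho(-f)$, I obtain the identity $\rho(f)+\rho(-f)=\rho(X)+\rho(Y)-\rho(\cone(f))$. Hence the doubly-transformed morphism rank equals $\frac{\rho(f)+\rho(-f)}{2}$, so agreement with $\rho(f)$ reduces to the identity $\rho(-f)=\rho(f)$.

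This identity is the main obstacle. I plan to establish it via the auxiliary lemma that $\rho$ is invariant under post-composition with an isomorphism. Given any iso $\beta:Y\to Y'$ and any $f:X\to Y$, the triangle $X\xrightarrow{\beta f}Y'\xrightarrow{g\beta^{-1}}Z\xrightarrow{h}\Sigma X$ is exact, being the image of the original triangle under the iso of triangles $(\id_X,\beta,\id_Z,\id_{\Sigma X})$. Axiom (M3) applied to its first rotation $Y'\xrightarrow{g\beta^{-1}}Z\xrightarrow{h}\Sigma X$, compared with (M3) applied to the first rotation $Y\xrightarrow{g}Z\xrightarrow{h}\Sigma X$ of the original, gives $\rho(g\beta^{-1})=\rho(g)$. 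The object-rank iso invariance $\rho(Y)=\rho(Y')$ follows from (M3) applied to $Y\xrightarrow{\beta}Y'\to 0$ together with the observation that any zero morphism has rank zero (itself derived from (M3) applied to $X\xrightarrow{\id}X\to 0$ and additivity). Axiom (M3) on the original and modified triangles at their unrotated positions then yields $\rho(\beta f)=\rho(f)$. Since $-\id_Y$ is an isomorphism in any additive category, $\rho(-f)=\rho((-\id_Y)\circ f)=\rho(f)$, completing the proof.
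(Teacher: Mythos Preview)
Your proof is correct. The two main directions (object-to-morphism and morphism-to-object) follow essentially the paper's argument; in particular, your bound $\rho(f)\le\rho(X)$, $\rho(g)\le\rho(Z)$ is a repackaging of the paper's cleaner identity $\rho(X)+\rho(Z)-\rho(Y)=2\rho(h)$, obtained by applying (\ref{M3}) to three consecutive rotations.

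Where you diverge from the paper is in explicitly verifying that the two constructions are mutually inverse. The paper leaves this to the reader, and indeed the direction object $\to$ morphism $\to$ object is trivial. But as you correctly observe, the direction morphism $\to$ object $\to$ morphism reduces to the identity $\rho(-f)=\rho(f)$, which is not entirely obvious from (\ref{M1})--(\ref{M3}) alone. Your derivation of this via invariance under composition with isomorphisms is sound: the key trick of comparing (\ref{M3}) on the rotation $Y\to Z\to\Sigma X$ with (\ref{M3}) on $Y'\to Z\to\Sigma X$, where the third map $h$ is unchanged, neatly isolates $\rho(g\beta^{-1})=\rho(g)$, and the rest follows. (Later in the paper this fact appears as Corollary~\ref{cor:scalar}, but there it is derived using the additional axioms (\ref{M4}) and (\ref{M5}) available in the periodic setting; your argument shows it already follows from (\ref{M1})--(\ref{M3}).)

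So your proof is more thorough than the paper's, filling in a step the authors omitted, at the cost of some extra length.
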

	\begin{proof} Given a nonnegative function $\rho$ on morphisms in $\C$ satisfying the three conditions of Definition~\ref{def:morphismrank1per}, the rule (\ref{eq:morphismtoobject}) defines a function on objects which is clearly nonnegative, and translation invariance (\ref{O1}) and additivity (\ref{O2}) follow immediately from the corresponding properties (\ref{M1}) and (\ref{M2}). Finally, take an exact triangle $X\xrightarrow{f} Y \xrightarrow{g} Z \rightsquigarrow$ in $\C$, with rotated exact triangles
$Y\xrightarrow{g} Z \xrightarrow{h} \Sigma X \rightsquigarrow$ and
$\Sigma^{-1}Z \xrightarrow{\Sigma^{-1}h} X \xrightarrow{f} Y \rightsquigarrow$. We have:
\begin{eqnarray*}
\rho(X)+\rho(Z)-\rho(Y) & = & \rho(\id_X)+\rho(\id_Z)-\rho(\id_Y) \\
& = & \left(\rho(\Sigma^{-1}h)+\rho(f)\right)
+ \left(\rho(g)+\rho(h)\right)
-\left(\rho(f)+\rho(g)\right) \\
& = & 2\rho(h) \\
& \geq & 0, \\ 
\end{eqnarray*}
by (\ref{M1}) and (\ref{M3}), which establishes (\ref{O3}).

Conversely, given a nonnegative function $\rho$ on objects of $\C$ satisfying the three conditions of Definition~\ref{def:objectrank1per}, define a function on morphisms by formula (\ref{eq:objecttomorphism}). For $f:X\to Y$, we have an exact triangle $Y\to\cone(f) \to \Sigma X\rightsquigarrow$, so (\ref{O1}) and (\ref{O3}) imply that $\rho(f)\geq 0$. Properties (\ref{M1}) and (\ref{M2}) are consequences of (\ref{O1}) and (\ref{O2}), respectively, and, finally, given an exact triangle
	 $X\xrightarrow{f} Y \xrightarrow{g} Z \rightsquigarrow$,
	 we confirm property \ref{M3}:
	 \begin{eqnarray*}
\rho(f)+\rho(g) & = &
\frac{\rho(X)+\rho(Y) - \rho(Z)}{2}
+
\frac{\rho(Y)+\rho(Z) - \rho(\Sigma X)}{2} \\
& = & \rho(Y) \\
& = & \rho(\id_Y), \\ 
\end{eqnarray*}
using \ref{O1}.
\end{proof}	
In view of Proposition \ref{prop:equivalence}, we regard a rank function on $\C$ as a function on both objects and morphisms, related by equations (\ref{eq:morphismtoobject}) and (\ref{eq:objecttomorphism}).

	We call a  rank function $\rho$ 
	\begin{itemize}
		\item {\em object-faithful}, if for all nonzero objects $X$ we have $\rho(\id_X)\neq 0$. 
		\item {\em morphism-faithful}, if for all nonzero maps $f$ we have $\rho(f)\neq 0$; 
		\item {\em integral}, if $\rho(f)\in\Z$ for all $f$.
		\item {\em prime} if $\rho$ is integral and $\C$ admits a generator $X$ such that $\rho(\id_X)=1$.			
	\end{itemize}

\begin{rem}\label{rem:clarification}
	 In the case $\C$ is a \emph{tensor} triangulated category (i.e it has a symmetric monoidal structure compatible with its triangulation, cf. \cite[Appendix A]{Hov97}), it makes sense to require additionally that a rank function $\rho$ is \emph{multiplicative} in the sense that $\rho(\id_X \otimes \id_Y) = \rho(\id_X)\rho(\id_Y)$ for any two objects $X,Y\in \C$. Multiplicative rank functions are likely to be of relevance to \emph{tensor triangular geometry}, \cite{Balmer05} but will not be considered in this paper.

	\end{rem}
\subsection{Periodic rank functions}\label{periodic} We will consider a certain refinement of the notion of a rank function defined above.

Let $R$ be an ordered commutative ring and fix $d\in\{1,2,\ldots\}\cup\{\infty\}$.
Put
$$R(d):=
\begin{cases}
R[q,q^{-1}], & \text{if } d=\infty \\
	R[q]/(q^d-1), & \text{if } d<\infty \\ 
\end{cases}\quad \text{and}\quad
R_{\geq 0}(d):=
\begin{cases}
	R_{\geq 0}[q,q^{-1}], & \text{if } d=\infty \\
	R_{\geq 0}[q]/(q^d-1), & \text{if } d<\infty \\ 
\end{cases}
$$
For $\phi,\psi\in R(d)$, we write $\phi\geq \psi$ to mean $\phi-\psi \in R_{\geq 0}(d)$. Given two integers $q, q^{\prime}$ where $d$ is divisible by $d^{\prime}$ or $d=\infty$, there is an obvious reduction map $\pi_{d, d^\prime}:R(d)\to R(d^{\prime})$. We will mostly be interested in the case $R=\R$ or $R=\Z$.

\begin{defi}\label{def:rankperiodic}
	A  {\em $d$-periodic rank function} on a triangulated category $\C$ is an assignment to each morphism $f$ in $\C$ of 
	$\rho(f)\in\BR_{\geq 0}(d)$, such that axioms (\ref{M2}) and (\ref{M3}) hold, axiom (\ref{M1}) gets modified as follows:
		
{\bf Translation invariance:} for any morphism $f$, we have
	\begin{itemize}\item[]	\begin{equation}\tag{Mp1}\label{Mp1}\rho(\Sigma f)= q\rho(f);\end{equation}\end{itemize}
and, in addition, the following axioms hold:
\begin{itemize}		
		\item[] {\bf Triangular inequality:} for all morphisms $f$, $g$ and $h$ in $\C$ such that $g$ and $h$ share the same domain and $f$ and $h$ share the same codomain, we have \begin{equation}\tag{M4}\label{M4}\rho\left(\begin{matrix} f & h \\ 0 & g\end{matrix}\right)\geq\rho(f)+\rho(g);\end{equation}
		\item[] {\bf Ideal condition:} for any morphisms $f$ and $g$ for which the composition $gf$ is defined, we have
		\begin{equation}\tag{M5}\label{M5}\rho(gf)\leq\rho(f) \quad \text{and} \quad \rho(gf)\leq\rho(g).\end{equation}
	\end{itemize} 
 \end{defi}
\begin{rem} Note that given two integers $d$ and $d^{\prime}$ where $d^{\prime}$ divides $d$ or $d=\infty$, a $d$-periodic rank function on $\C$ determines a $d^{\prime}$-periodic rank function on $\C$ via  the reduction map $\BR(d)\to \BR(d^\prime)$. In particular, any $d$-periodic rank function gives rise to a $1$-periodic rank function. Under this reduction, axiom (\ref{Mp1}) becomes (\ref{M1}) and we will see that often (e.g. when $d=1$) the axioms (\ref{M4}) and (\ref{M5}) are consequence of axioms (\ref{M1}), (\ref{M2}) and (\ref{M3}). In particular, the terms `rank function' and `1-periodic rank function' are synonymous. Furthermore,  often
a periodic rank function can be defined as a function on objects (e.g. we saw that it holds for $d=1$).
\end{rem}
A $d$-periodic rank function taking values in $\BZ_{\geq 0}(d)$ will be called \emph{integral}. The notions of a prime, object-faithful and morphism-faithful rank function obviously make sense in the $d$-periodic case.
\begin{prop}\label{prop:objectrank} Let $\rho$ be a $d$-periodic rank function on $\C$. For all objects $X$ in $\C$, put
	$$\rho(X):=\rho(\id_X) \in \BR_{\geq 0}(d).$$	
	Then $\rho$ satisfies the additivity axiom (\ref{O2}) of the rank function whereas axioms (\ref{O1}), (\ref{O3}) get refined as follows:
	\begin{enumerate}
		\item[] {\bf Translation invariance:} for any object $X$ in $\C$, we have
		\begin{equation}\tag{Op1}\label{Op1}\rho(\Sigma X)= q\rho(X);\end{equation}
		\item[] {\bf Triangle inequality:} for any exact triangle $X \to Y \to Z \rightsquigarrow$ in $\C$
		we have 
		\begin{equation}\tag{Op3}\label{Op3}\rho(X)-\rho(Y)+\rho(Z)=(q+1)\phi,\end{equation}
		for some $\phi\in\BR_{\geq 0}(d)$. More precisely, we can take 
		$\phi=\rho(f)$, where $f:\Sigma^{-1}Z\xrightarrow{f} X$ is the connecting morphism for the triangle.	\end{enumerate}	
	Moreover, if $\rho$ is integral, i.e. $\rho(f)\in\BZ_{\geq 0}(d)$ for all morphisms $f$ in $\C$, then  $\rho(X)\in\BZ_{\geq 0}(d)$  for any $X\in\C$.
\end{prop}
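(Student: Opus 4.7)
The plan is to derive each of the three displayed properties directly from the morphism-level axioms, essentially by specializing to identities and exploiting triangle rotations. The key observation is that (O2) and (Op1) are almost formal, while (Op3) follows by summing the rank-nullity relation (\ref{M3}) over the three rotations of the given triangle.

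First, for additivity (\ref{O2}), since $\id_{X\oplus Y} = \id_X \oplus \id_Y$, applying (\ref{M2}) to the pair $(\id_X,\id_Y)$ gives $\rho(X\oplus Y) = \rho(\id_X)+\rho(\id_Y) = \rho(X)+\rho(Y)$. For translation invariance (\ref{Op1}), the identity $\Sigma\id_X = \id_{\Sigma X}$ and (\ref{Mp1}) yield $\rho(\Sigma X) = \rho(\Sigma \id_X)=q\rho(\id_X) = q\rho(X)$. The integrality statement is immediate: if $\rho$ takes values in $\BZ_{\geq 0}(d)$ on morphisms, then in particular $\rho(X)=\rho(\id_X)\in\BZ_{\geq 0}(d)$.

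The crux is (\ref{Op3}). Starting from the exact triangle $X\xrightarrow{a} Y\xrightarrow{b} Z\xrightarrow{h}\Sigma X$, I consider the three rotations
\[
X\xrightarrow{a} Y\xrightarrow{b} Z\rightsquigarrow,\quad Y\xrightarrow{b} Z\xrightarrow{h}\Sigma X\rightsquigarrow,\quad \Sigma^{-1}Z\xrightarrow{f} X\xrightarrow{a} Y\rightsquigarrow,
\]
where $f=\Sigma^{-1}h$, so $h=\Sigma f$ and, by (\ref{Mp1}), $\rho(h)=q\rho(f)$. Applying (\ref{M3}) to each rotation in turn gives
\begin{align*}
\rho(a)+\rho(b) &= \rho(Y),\\
\rho(b)+\rho(h) &= \rho(Z),\\
\rho(f)+\rho(a) &= \rho(X).
\end{align*}
Taking the first equation with a minus sign and adding the other two eliminates $\rho(a)$ and $\rho(b)$, leaving
\[
\rho(X)-\rho(Y)+\rho(Z) = \rho(f)+\rho(h) = \rho(f)+q\rho(f) = (1+q)\rho(f),
\]
so we may take $\phi=\rho(f)\in\BR_{\geq 0}(d)$, which is the asserted form.

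I do not anticipate a real obstacle: axioms (\ref{M4}) and (\ref{M5}) are not needed for this proposition, and the only subtlety is the bookkeeping on the suspension factor $q$ when rotating the triangle once, which is handled cleanly by (\ref{Mp1}).
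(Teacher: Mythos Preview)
Your proof is correct and follows essentially the same approach as the paper: both derive (\ref{O2}) and (\ref{Op1}) immediately from (\ref{M2}) and (\ref{Mp1}), and obtain (\ref{Op3}) by applying (\ref{M3}) to the three rotations of the triangle and combining, using (\ref{Mp1}) to rewrite $\rho(h)=q\rho(f)$. Your observation that (\ref{M4}) and (\ref{M5}) are unnecessary is also noted in the paper immediately following the proof.
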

\begin{proof} This is a straightforward modification of the first part of proof of  Proposition \ref{prop:equivalence}.
In particular 	conditions (\ref{Op1}) and (\ref{O2}) follow immediately from (\ref{Mp1}) and (\ref{M2}). Furthermore, given an exact triangle $\Sigma^{-1}Z
	\xrightarrow{f} X \xrightarrow{g} Y \xrightarrow{h} Z$, we have, using (\ref{M3}) and (\ref{Mp1}),
	\begin{align*}
	\rho(X)-\rho(Y)+\rho(Z) & = (\rho(f) + \rho(g))-(\rho(g)+\rho(h))+(\rho(h)+\rho(\Sigma f)) \\
	& = (q+1)\rho(f),
	\end{align*}
	as desired. The claim about integrality is likewise clear.
\end{proof}
\begin{rem}
Note that in order to obtain axioms (\ref{Op1}), (\ref{O2}) and (\ref{Op3}), we only make use of conditions (\ref{Mp1}), (\ref{M2}) and (\ref{M3}) of Definition \ref{def:rankperiodic}.
\end{rem}
The following useful observation relates the rank of morphism with that of its source and target.
\begin{lem}\label{lem:boundmorphismrank}
Let $\rho$ be a $d$-period rank function on $\C$. For any morphism $f:X\to Y$, we have $\rho(f) \leq \rho(X)$ and $\rho(f) \leq \rho(Y)$.
\end{lem}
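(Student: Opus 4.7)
The plan is to derive both inequalities immediately from the ideal condition (\ref{M5}), using the trivial factorizations of $f$ through the identity morphisms of its source and target. Specifically, I would write $f = f \circ \id_X$ and $f = \id_Y \circ f$, and then invoke (\ref{M5}) on each composition.

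Concretely, for the first bound, applying (\ref{M5}) to the composition $f \circ \id_X$ (with $\id_X$ playing the role of the inner morphism) yields $\rho(f) = \rho(f \circ \id_X) \leq \rho(\id_X) = \rho(X)$, where the last equality is simply the definition of $\rho(X)$ from Proposition~\ref{prop:objectrank}. For the second bound, applying (\ref{M5}) to the composition $\id_Y \circ f$ (with $\id_Y$ playing the role of the outer morphism) gives $\rho(f) = \rho(\id_Y \circ f) \leq \rho(\id_Y) = \rho(Y)$.

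There is essentially no obstacle here: the lemma is a direct consequence of (\ref{M5}) combined with the definition $\rho(X) := \rho(\id_X)$, and the ordering on $\BR_{\geq 0}(d)$ is the one already fixed in Section~\ref{periodic}, so the inequalities make sense as stated. It is worth noting that this argument uses only axiom (\ref{M5}) and is therefore valid for any $d$-periodic rank function without further hypotheses.
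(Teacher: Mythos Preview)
Your proof is correct and matches the paper's own argument exactly: the paper simply says to consider the factorizations $f = f\circ\id_X$ and $f = \id_Y\circ f$ in the ideal condition (\ref{M5}).
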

\begin{proof}
	Consider the factorizations $f=f\circ\id_X$ and $f=\id_Y\circ f$ in the ideal condition (\ref{M5}).
	\end{proof}

To check condition \ref{Op3} when $d=\infty$, the following obvious lemma is useful.
\begin{lem}\label{lem:obvious}
	A real Laurent polynomial $f(q)=\sum_{n\in\BZ} a_nq^n$ can be written as $f(q)=(1+q)\phi$ where $\phi$ has nonnegative coefficients if and only if
 for any $n\in\BZ$ it holds that $\sum_{i=0}^{\infty}(-1)^{i}a_{n+i}\geq 0$.
	\noproof
\end{lem}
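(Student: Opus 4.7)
The plan is to prove both directions by formally inverting multiplication by $(1+q)$. Given $\phi(q)=\sum_n b_n q^n$, the identity $(1+q)\phi=f$ is equivalent to the recursion $a_n=b_{n-1}+b_n$ for every $n\in\BZ$. Since $\phi$ is required to be a Laurent polynomial, I fix an index $N$ beyond the support of $\phi$ (where $b_N=0$) and unwind the recursion downwards, obtaining the closed form
$$b_n \;=\; a_{n+1}-a_{n+2}+a_{n+3}-\cdots \;=\; \sum_{i=0}^{\infty}(-1)^i a_{n+1+i},$$
which is a finite sum because $f$ has finite support.

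For the forward direction, if $f=(1+q)\phi$ with $b_n\ge 0$, the closed form above gives, upon reindexing $n\mapsto n-1$, the identity $b_{n-1}=\sum_{i=0}^{\infty}(-1)^i a_{n+i}\ge 0$, which is the stated inequality.

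For the reverse direction, I would start with a Laurent polynomial $f$ satisfying the hypothesis, \emph{define} $b_n$ by the closed-form formula, and then verify three things: (i) $b_n\ge 0$, which is immediate from the hypothesis after reindexing; (ii) only finitely many $b_n$ are nonzero, so that $\phi(q):=\sum_n b_nq^n$ is indeed a Laurent polynomial; and (iii) $(1+q)\phi(q)=f(q)$. Step (iii) is a telescoping identity, since
$$b_{n-1}+b_n \;=\; \sum_{i=0}^{\infty}(-1)^i a_{n+i}+\sum_{i=0}^{\infty}(-1)^i a_{n+1+i}\;=\;a_n,$$
the cancellation being justified by finiteness of the sums.

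The main obstacle I anticipate is step (ii), the finite-support claim for $\phi$. For large positive $n$ it is automatic: if $a_m=0$ for $m>M$, then the defining alternating sum for $b_n$ vanishes term by term once $n\ge M$. For large negative $n$, however, the $b_n$ are genuine alternating sums of coefficients of $f$ and a priori need not vanish. The key observation is that once $n$ drops below the support of $f$, replacing $n$ by $n-1$ merely prepends a zero term and then shifts the index, so $\sum_{i\ge 0}(-1)^i a_{n-1+i}=-\sum_{i\ge 0}(-1)^i a_{n+i}$. The hypothesis demands both sums to be nonnegative, forcing each to equal zero for all sufficiently negative $n$, and hence $b_n=0$ there. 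This secures step (ii) and completes the argument.
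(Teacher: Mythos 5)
Your proof is correct. The paper states this lemma without proof (it is marked as obvious), and your explicit inversion of multiplication by $(1+q)$ — deriving the closed form $b_n=\sum_{i\ge 0}(-1)^i a_{n+1+i}$ from the recursion $a_n=b_{n-1}+b_n$, and then carefully verifying nonnegativity, finite support (including the sign-flip observation for $n$ below the support of $f$), and the telescoping identity $b_{n-1}+b_n=a_n$ — is a complete and valid verification of exactly the kind the paper implicitly relies on.
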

It is natural to ask whether a rank function is determined by its values on objects (so as to obtain a periodic analogue Proposition \ref{prop:equivalence}). The following result gives a partial answer to that.
\begin{prop}\label{prop:objectmorphism}
Assume that $d=\infty$ or $d$ is odd and there is given an assignment to any object $X$ of $\C$ of an element $\rho(X)\in\R(d)$ satisfying conditions (\ref{Op1}), (\ref{O2}) and (\ref{Op3}). Then, for any $f:X\to Y$ in $\C$ the formula
\begin{equation}\label{eq:objectmor} \rho(f):=\frac{\rho(Y)-\rho(\cone(f))+q\rho(X)}{q+1}\end{equation}	
determines a $d$-periodic rank function on $\C$.  If, in addition, $\rho(X)\in\BZ(d)$ and the polynomial $\phi$ figuring in triangle inequality (\ref{Op3}), belongs to $\BZ(d)$, then the obtained rank function is integral.
\end{prop}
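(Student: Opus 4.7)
The plan is to verify the axioms of Definition~\ref{def:rankperiodic} in turn, directly from \eqref{eq:objectmor}. The preliminary observation I need is that $q+1$ is a non-zero-divisor in $\BR(d)$: for $d=\infty$ this holds because $\BR[q,q^{\pm 1}]$ is a domain, while for odd $d$ the element $-1$ is not a root of $q^d-1$, so $q+1$ is coprime to $q^d-1$ in $\BR[q]$ and therefore a unit in $\BR(d)$. This guarantees that $\rho(f)$ is unambiguously defined by \eqref{eq:objectmor}, and makes available the cancellation rule $(q+1)\alpha=(q+1)\beta\Rightarrow\alpha=\beta$ throughout the argument. For non-negativity I apply (\ref{Op3}) to the rotated triangle $Y\to\cone(f)\to\Sigma X$: this yields some $\phi\in\BR_{\geq 0}(d)$ with $\rho(Y)-\rho(\cone(f))+q\rho(X)=(q+1)\phi$, and since the left-hand side equals $(q+1)\rho(f)$ by definition, cancellation gives $\rho(f)=\phi\geq 0$.

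Axioms (\ref{Mp1}), (\ref{M2}) and (\ref{M3}) reduce to short formal manipulations of the formula, using $\cone(\Sigma f)\cong\Sigma\cone(f)$ together with (\ref{Op1}); $\cone(f\oplus g)\cong\cone(f)\oplus\cone(g)$ together with (\ref{O2}); and the identifications $\cone(f)\cong Z$, $\cone(g)\cong\Sigma X$ in an exact triangle $X\xrightarrow{f}Y\xrightarrow{g}Z$, respectively. For (\ref{M4}), I factor the upper triangular morphism $T=\begin{pmatrix}f & h\\ 0 & g\end{pmatrix}$ as $T_2 T_1$, where $T_1=\begin{pmatrix}f & h\\ 0 & \id_C\end{pmatrix}$ and $T_2=\id_B\oplus g$; the unipotent automorphism $\begin{pmatrix}\id_B & h\\ 0 & \id_C\end{pmatrix}$ of $B\oplus C$ exhibits $T_1$ as the composition of an isomorphism with $f\oplus\id_C$, yielding $\cone(T_1)\cong\cone(f)$, while obviously $\cone(T_2)\cong\cone(g)$. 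The octahedral axiom applied to $T=T_2 T_1$ then furnishes an exact triangle $\cone(f)\to\cone(T)\to\cone(g)\rightsquigarrow$; applying (\ref{Op3}) to it and substituting into \eqref{eq:objectmor} gives $\rho(T)=\rho(f)+\rho(g)+\phi''$ with $\phi''\geq 0$, as required.

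The main obstacle is (\ref{M5}). My plan is to exploit the fact that the commutative square with horizontal arrows $g\colon Y\to Z$ and $c\colon\cone(f)\to\cone(gf)$ and vertical arrows $u\colon Y\to\cone(f)$ and $v\colon Z\to\cone(gf)$ (arising from the octahedral axiom for $X\xrightarrow{f}Y\xrightarrow{g}Z$) is homotopy Cartesian, which produces the Mayer--Vietoris distinguished triangle $Y\to Z\oplus\cone(f)\to\cone(gf)\rightsquigarrow$. Applying (\ref{Op3}) to this triangle supplies some $\phi\geq 0$ with $(q+1)\phi=\rho(Y)-\rho(Z)-\rho(\cone(f))+\rho(\cone(gf))$, and a direct calculation identifies the right-hand side with $(q+1)(\rho(f)-\rho(gf))$; cancellation therefore forces $\rho(f)-\rho(gf)=\phi\geq 0$, proving the first half of (\ref{M5}). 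The companion inequality $\rho(gf)\leq\rho(g)$ then follows by applying the half just proved to the factorization $w=d\circ v$ supplied by the octahedral, where $d\colon\cone(gf)\to\cone(g)$ is the third map in the triangle $\cone(f)\to\cone(gf)\to\cone(g)\rightsquigarrow$: this gives $\rho(w)\leq\rho(v)$, which upon using (\ref{M3}) to rewrite $\rho(w)=\rho(Z)-\rho(g)$ and $\rho(v)=\rho(Z)-\rho(gf)$ translates directly to $\rho(gf)\leq\rho(g)$. Finally, integrality of $\rho(f)$ is immediate from the identification $\rho(f)=\phi$ obtained in the non-negativity step.
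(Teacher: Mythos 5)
Your proof is correct and follows essentially the same strategy as the paper: verify each axiom directly from formula (\ref{eq:objectmor}), using that $q+1$ is a non-zero-divisor in $\BR(d)$, the Mayer--Vietoris triangle $Y\to\cone(f)\oplus Z\to\cone(gf)\rightsquigarrow$ (which is the paper's Lemma \ref{lemma:weakNeeman}) for the ideal condition, and the octahedral decomposition of the block upper-triangular morphism for the triangular inequality. The one genuine value-added part of your argument is that you verify \emph{both} inequalities in (\ref{M5}): the paper only establishes $\rho(gf)\leq\rho(f)$ from the weak-Neeman triangle and leaves the companion inequality $\rho(gf)\leq\rho(g)$ unaddressed, presumably regarding it as dual. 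Your derivation of the second inequality --- applying the already-proven half to the octahedral factorization $w=d\circ v$ with $d\colon\cone(gf)\to\cone(g)$, then converting $\rho(w)\leq\rho(v)$ into $\rho(gf)\leq\rho(g)$ via (\ref{M3}) --- is clean and fills this gap. One cosmetic difference: the paper states the octahedral triangle as $\cone(g)\to\cone(T)\to\cone(f)\rightsquigarrow$ whereas your factorization yields $\cone(f)\to\cone(T)\to\cone(g)\rightsquigarrow$; your ordering is the correct one coming from the commutative square on the first summand, but both give the same estimate via (\ref{Op3}), so this doesn't affect either proof. Your handling of non-negativity (identifying $\rho(f)$ with the $\phi$ supplied by (\ref{Op3}) applied to the rotated triangle) and of integrality is also correct.
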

\begin{proof}
We will only treat the case of real rank functions; the integral case is obtained completely analogously. Note that $q+1$ is invertible in $\BR(d)$ if $d$ is odd, and a
non-zero-divisor (like all non-zero elements) if $d=\infty$. By Proposition \ref{prop:objectrank}, the element $\rho(Y)-\rho(\cone(f))+q\rho(X)$ is divisible by $q+1$ and it follows that it is \emph{uniquely} divisible. 
	Thus, $\rho(f)\in \BR(d)$ for any $X\in \C$. The conditions (\ref{Mp1}), (\ref{M2}) and (\ref{M3}) of Definition \ref{def:rankperiodic} are proved by the same argument as the second part of Proposition \ref{prop:equivalence}. 
	
To check (\ref{M5}), suppose we are given 
morphisms $f:X\to Y$ and $g:Y\to Z$.
We have 
\begin{align*}
\rho(f)-\rho(gf)&=\frac{q\rho(X)+\rho(Y)-\rho(\cone(f))}{q+1}-\frac{q\rho(X)+\rho(Z)-\rho(\cone(gf  ))}{q+1}\\
&=\frac{\rho(Y)-\rho(\cone(f)\oplus Z)+\rho(\cone(gf)))}{q+1}\\
&\geq 0,
\end{align*}
where the inequality is deduced from Lemma~\ref{lemma:weakNeeman} below and (\ref{Op3}).

Finally, we check (\ref{M4}). Suppose we have morphisms $f\colon X\to Y$, $g\colon Z \to W$ and $h:Z\to Y$
in $\C$.  There is an exact triangle in $\C$ of the form
$$\cone(g)\to\cone\left(\begin{matrix} f & h \\ 0 & g\end{matrix}\right)\to\cone(f)\rightsquigarrow.$$
Hence, by (\ref{O2}) and (\ref{Op3}),
\begin{align*}
\rho\left(\begin{matrix} f & h \\ 0 & g\end{matrix}\right) & =
\frac{\rho(Y\oplus W)-\rho\left(\cone\left(\begin{matrix} f & h \\ 0 & g\end{matrix}\right)\right)
	+q\rho(X\oplus Z)}{q+1} \\
& \geq \frac{\rho(Y)+\rho(W)-\rho(\cone(f))-\rho(\cone(g))+q\rho(X)+q\rho(Z)}{q+1} \\
& = \rho(f) + \rho(g).
\end{align*}	 
\end{proof}

\begin{lem}\label{lemma:weakNeeman}
	For any pair of composable morphisms $f:X\to Y$ and $g:Y\to Z$ in a triangulated category, there exists an exact triangle of the form
	$$Y\to \cone(f)\oplus Z \to \cone(gf)\rightsquigarrow.$$
\end{lem}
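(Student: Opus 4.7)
The plan is to derive the stated triangle by applying the octahedral axiom to a judicious factorization of $gf$ and then identifying one of the resulting cones as $\cone(f)\oplus Z$.

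First, I would factor the composite $gf:X\to Z$ as
\[X\xrightarrow{\binom{f}{gf}} Y\oplus Z\xrightarrow{(0,1)} Z,\]
where $(0,1)$ denotes the projection onto $Z$. This projection fits into the split distinguished triangle $Y\to Y\oplus Z\xrightarrow{(0,1)} Z\rightsquigarrow$, which identifies its cone with $\Sigma Y$. The octahedral axiom applied to the above factorization then produces a distinguished triangle
\[\cone\binom{f}{gf}\to\cone(gf)\to\Sigma Y\rightsquigarrow,\]
which after rotation reads $Y\to\cone\binom{f}{gf}\to\cone(gf)\rightsquigarrow$.

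It remains to identify $\cone\binom{f}{gf}$ with $\cone(f)\oplus Z$. The unipotent endomorphism $\left(\begin{matrix} 1 & 0\\ g & 1\end{matrix}\right)$ is an automorphism of $Y\oplus Z$ carrying $\binom{f}{0}$ to $\binom{f}{gf}$, so the cones of these two morphisms are canonically isomorphic. On the other hand, $\binom{f}{0}:X\to Y\oplus Z$ is manifestly the direct sum of $f:X\to Y$ and the zero map $0\to Z$, so taking the direct sum of the distinguished triangles $X\xrightarrow{f} Y\to\cone(f)\rightsquigarrow$ and $0\to Z\xrightarrow{\id} Z\rightsquigarrow$ exhibits $\cone\binom{f}{0}\cong\cone(f)\oplus Z$. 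Substituting into the triangle of the previous paragraph yields the desired result.

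There is no real obstacle: the argument is essentially a single application of the octahedral axiom combined with standard direct-sum manipulations of distinguished triangles. The only point requiring minor care is ensuring that the identification $\cone(0,1)\cong\Sigma Y$ and the rotation produce the first morphism with source exactly $Y$ (and not some suspension thereof), which is immediate from the standard split triangle associated with a biproduct.
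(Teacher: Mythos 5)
Your proof is correct, and it takes a genuinely different route from the paper's. The paper constructs a $3\times 3$-type commutative diagram in which the square
\[
\xymatrix{Y\ar[r]\ar_g[d]&\cone(f)\ar[d]\\ Z\ar[r]&W}
\]
is a homotopy pushout, identifies $W\cong\cone(gf)$ by comparing the two rows of exact triangles, and then reads off the stated triangle as the Mayer--Vietoris triangle $Y\to\cone(f)\oplus Z\to W\rightsquigarrow$ attached to that pushout square. You instead factor $gf$ as $X\xrightarrow{\binom{f}{gf}}Y\oplus Z\xrightarrow{(0,1)}Z$, apply the octahedral axiom once to this factorization, and then identify $\cone\binom{f}{gf}$ with $\cone(f)\oplus Z$ via the unipotent automorphism $\bigl(\begin{smallmatrix}1&0\\ g&1\end{smallmatrix}\bigr)$ of $Y\oplus Z$. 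Both arguments ultimately rest on the octahedral axiom, but yours is more elementary and self-contained: it avoids invoking the homotopy-pushout formalism and its associated Mayer--Vietoris triangle, trading that machinery for an explicit change of basis in the middle term. The paper's approach is perhaps more conceptually transparent about \emph{why} the lemma holds (it is a shadow of a pushout square), whereas yours is a cleaner deduction from the axioms that would be easier to verify line by line in an arbitrary triangulated category.
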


\begin{proof}There exists the following commutative diagram where the right square is a homotopy pushout and the rows are exact triangles:
\[
\xymatrix{
X\ar@{-->}^{\cong}[d]\ar^f[r]&Y\ar^g[d]\ar[r]&\cone(f)\ar[d]\ar@{~>}[r]&\\
X'\ar[r]&Z\ar[r]&W\ar@{~>}[r]&
}
\] 
By a version of the octahedral axiom the dotted arrow exists making the whole diagram commutative and the bottom row into an exact triangle. It follows that $W\cong\cone(gf)$ and the desired result follows.	
%
	\end{proof}

\subsection{Stability conditions and rank functions}
Recall that a \emph{stability condition} on a triangulated category $\C$, cf. \cite{Bridgeland07} is a pair $(\mathcal{P},Z)$ where
$\mathcal{P}=\mathcal{P}(\phi),\phi\in\BR$ is a slicing on $\C$, a collection of subcategories of $\C$ with properties modelled on the Postnikov truncations in the category of chain complexes and a \emph{central charge}, that is a homomorphism $Z:K_0(\C)\to\mathbb{C}$ compatible with the slicing in a suitable way. Every nonzero object $E$ of $\mathcal C$ has a filtration $0=E_0\to\ldots\to E_n=E$ so that 
$A_i:=\cone(E_{i-1}\to E_{i})$  belongs to $P(\phi_i)$ and $\phi_1>\ldots>\phi_n$; one sets $\phi^-_{\mathcal P}(E)=\phi_1$ and $\phi^+_{\mathcal P}(E)=\phi_n$.  
The \emph{mass} of $E$ is defined as $m_\sigma(E)=\sum_{i=1}^n|Z(A_i)|$.

More generally, one can introduce a parameter and define
$$m_{\sigma, t}(E) = \sum |Z(A_i)| e^{\phi_i t}$$
 cf.\cite[Section 4.5]{Dimitrov14}.

\begin{prop}
Given a stability condition $\sigma=(\mathcal{P},Z)$ on a triangulated category $\C$,
the mass $m_{\sigma}$ defines an object faithful  rank function on $\C$.
\end{prop}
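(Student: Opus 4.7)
The plan is to verify nonnegativity, object-faithfulness, and the three axioms (\ref{O1}), (\ref{O2}), (\ref{O3}) of Definition~\ref{def:objectrank1per} for $m_\sigma$. Nonnegativity is immediate from $m_\sigma(E)=\sum_i|Z(A_i)|$. Object-faithfulness uses the fact that the central charge of a stability condition takes values in the open ray $\mathbb{R}_{>0}\cdot e^{i\pi\phi}$ on each slice $\mathcal{P}(\phi)$, so for nonzero $E$ each HN factor contributes a strictly positive summand and $m_\sigma(E)>0$.

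Translation invariance (\ref{O1}) follows from $\mathcal{P}(\phi+1)=\Sigma\mathcal{P}(\phi)$ and $Z(\Sigma A)=-Z(A)$: the HN factors of $\Sigma E$ are the suspensions $\Sigma A_i$ with unchanged $|Z|$-values. For additivity (\ref{O2}), the HN filtration of $X\oplus Y$ is obtained by merging those of $X$ and $Y$, collecting equal-phase pieces into direct sums; since $Z$ is $\mathbb{R}_{>0}$-linear on each slice, $|Z|$ is additive on same-phase direct sums, and $m_\sigma(X\oplus Y)=m_\sigma(X)+m_\sigma(Y)$ follows.

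The substantive part is the triangle inequality (\ref{O3}). Given an exact triangle $X\to Y\to Z$, I would concatenate the HN filtration of $X$ with the preimage under $Y\to Z$ of the HN filtration of $Z$ to obtain a filtration of $Y$ whose successive cones are the HN factors of $X$ followed by those of $Z$; the sum of the moduli of their central charges is precisely $m_\sigma(X)+m_\sigma(Z)$. It then suffices to establish the auxiliary minimality lemma: for any filtration of $Y$ by semistable subquotients $B_1,\ldots,B_k$, we have $m_\sigma(Y)\leq\sum_i|Z(B_i)|$.

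The main obstacle is this minimality, which I would prove by induction on the number of phase inversions in the filtration, i.e.\ pairs $i<j$ with $\phi(B_i)<\phi(B_j)$. With zero inversions the filtration is already Harder--Narasimhan (after fusing equal-phase pieces) and equality holds. Otherwise, choose an adjacent inverted pair $B_i,B_{i+1}$ with $\phi(B_i)<\phi(B_{i+1})$ and consider the subextension $B_i\to Y_{i+1}/Y_{i-1}\to B_{i+1}$. By Bridgeland's Hom-vanishing between slices, either the connecting morphism vanishes and the pair can simply be swapped, or the middle object is refined via its own HN filtration; in either case the $\mathbb{C}$-triangle inequality $|z+z'|\leq|z|+|z'|$ ensures the total mass does not increase while the inversion count strictly decreases. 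Iterating yields the bound and completes the proof.
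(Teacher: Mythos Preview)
Your treatment of nonnegativity, object-faithfulness, (\ref{O1}), and (\ref{O2}) is correct, and reducing (\ref{O3}) to the minimality statement $m_\sigma(Y)\le\sum_i|Z(B_i)|$ for an arbitrary semistable filtration is the right route (the paper simply cites \cite{Ikeda16} for this inequality). The gap is in your inductive step for minimality, in the case where the connecting morphism $B_{i+1}\to\Sigma B_i$ is nonzero and you replace $(B_i,B_{i+1})$ by the HN factors $C_1,\dots,C_r$ of the intermediate object $M$. The claim that the total mass does not increase is $\sum_j|Z(C_j)|\le|Z(B_i)|+|Z(B_{i+1})|$, i.e.\ the very minimality lemma applied to the two-term filtration of $M$; the complex triangle inequality you invoke only yields the weaker $|Z(M)|\le|Z(B_i)|+|Z(B_{i+1})|$, which coincides with what you need only when $M$ is itself semistable. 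Nor does the inversion count necessarily drop: if $r>2$ and some earlier $B_j$ satisfies $\phi(B_j)<\phi(B_i)$, then $B_j$ had two inversions against the pair but acquires $r$ against the new pieces.

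What is missing is a direct proof of the two-term base case. Nonvanishing of the connecting map forces $\phi(B_{i+1})-\phi(B_i)\le 1$, so by extension-closure every HN factor of $M$ has central charge in the closed sector between the rays through $Z(B_i)$ and $Z(B_{i+1})$; decomposing each $Z(C_j)$ along those two extreme rays and summing gives $\sum_j|Z(C_j)|\le|Z(B_i)|+|Z(B_{i+1})|$ by the ordinary triangle inequality in $\mathbb C$. With this in hand, minimality follows by straightforward induction on the length of the filtration rather than on inversions.
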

  
 \begin{proof}
Translation invariance (\ref{O1}), additivity (\ref{O2}) and object-faithfulness are obvious.
The triangle inequality for $ m_{\sigma, t}(E)$ is proved in 
\cite[Proposition 3.3]{Ikeda16} for all $t$ and (\ref{O3}) follows by taking $t=0$.
\end{proof}
\begin{rem}
The last result suggests that a rank function may serve as a replacement for a stability condition on $\C$ which exists even in the absence of a $t$-structure (e.g. when $\C$ is periodic).
\end{rem}
The set $\operatorname{Stab}(\C)$ of stability conditions on $\C$ is a topological space. The topology may be induced by the
generalized metric (allowed to assume infinite values):
$$d(\sigma_1,\sigma_2) = \sup_{0\neq E\in\C}\left\{|\phi^-_{\sigma_2}(E)-\phi^-_{\sigma_1}(E)|, |\phi^+_{\sigma_2}(E)-\phi^+_{\sigma_1}(E)|, 
\left|\log\frac{m_{\sigma_2}(E)}{m_{\sigma_1}(E))}\right|\right\},$$
(cf. \cite{Bridgeland07}, p. 341).
In a similar (albeit much more obvious) way, 
the set $\operatorname{FRank}(\C)$ of object faithful rank functions on $\C$ is topologized by the generalized metric
$$ d(\rho_1,\rho_2)=\sup_{0\neq E\in\mathcal C}\left|\log\frac{\rho_1(E)}{\rho_2(E)}\right|.$$

On the set $\operatorname{Rank}(\C)$ of \emph{all} rank functions on $\C$, it is more useful, following Schofield \cite[Chapter 7]{Sch86}, to consider
the topology of pointwise convergence of real-valued functions on objects of $\C$.

The following result is immediate:

\begin{prop}
The maps
\begin{align*}
\operatorname{Stab}(\C)&\rightarrow\operatorname{FRank}(\C)\hookrightarrow  \operatorname{Rank}(\C) \\
 \sigma &\mapsto m_\sigma\
 \end{align*}
are continuous.\qed
	\end{prop}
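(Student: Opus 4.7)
The plan is to unwind the three topologies in play and show that continuity of both maps is essentially built into the definitions of the respective (generalized) metrics. For the first map $\sigma\mapsto m_{\sigma}$, I would observe that one of the three terms in the supremum defining the metric on $\operatorname{Stab}(\C)$ is precisely $\bigl|\log(m_{\sigma_2}(E)/m_{\sigma_1}(E))\bigr|$, which is also the only quantity appearing in the metric on $\operatorname{FRank}(\C)$. Hence
\[
d\bigl(m_{\sigma_1},m_{\sigma_2}\bigr)\;=\;\sup_{0\neq E\in\C}\left|\log\frac{m_{\sigma_2}(E)}{m_{\sigma_1}(E)}\right|\;\leq\;d(\sigma_1,\sigma_2),
\]
so the assignment $\sigma\mapsto m_{\sigma}$ is $1$-Lipschitz and in particular continuous. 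Before invoking this estimate I would record the preceding proposition to note that $m_{\sigma}$ really does land in $\operatorname{FRank}(\C)$, i.e., is object-faithful, so the map is well defined.

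For the second map, the inclusion $\operatorname{FRank}(\C)\hookrightarrow\operatorname{Rank}(\C)$, the point is that a basic open set in $\operatorname{Rank}(\C)$ (pointwise convergence) is of the form $\{\rho:|\rho(E_i)-\rho_0(E_i)|<\varepsilon,\ i=1,\ldots,n\}$ for a finite collection of objects $E_i$ and some $\varepsilon>0$. If $d(\rho,\rho_0)<\delta$ in the metric of $\operatorname{FRank}(\C)$, then for each object $E$ one has $\bigl|\log(\rho(E)/\rho_0(E))\bigr|<\delta$, hence $e^{-\delta}\rho_0(E)\leq\rho(E)\leq e^{\delta}\rho_0(E)$. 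Choosing $\delta$ small enough depending on $\max_i \rho_0(E_i)$ and $\varepsilon$ makes each $|\rho(E_i)-\rho_0(E_i)|<\varepsilon$ simultaneously, so the preimage of the basic open set is open in $\operatorname{FRank}(\C)$. This shows continuity of the inclusion.

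The main (very mild) subtlety I would flag is that the metric on $\operatorname{FRank}(\C)$ uses logarithmic ratios and is only defined on object-faithful rank functions (so that one does not divide by zero), while the topology on $\operatorname{Rank}(\C)$ is linear in $\rho(E)$; the bookkeeping above is the reason we have to pass to a finite set of objects $E_i$ and use the local bound $e^{\delta}-1\approx\delta$ to convert between the two. Composing, the full map $\operatorname{Stab}(\C)\to\operatorname{Rank}(\C)$ is continuous as a composition of continuous maps, which is the desired conclusion.
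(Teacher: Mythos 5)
Your proof is correct and is exactly the definitional unwinding that the paper treats as immediate (the proposition is stated with $\qed$ and no written argument): the Lipschitz estimate for $\sigma\mapsto m_\sigma$ falls straight out of the fact that the mass term is one of the three quantities in the supremum defining the metric on $\operatorname{Stab}(\C)$, and the continuity of the inclusion into $\operatorname{Rank}(\C)$ follows from the elementary bound $|\rho(E)-\rho_0(E)|\leq (e^{\delta}-1)\rho_0(E)$ applied to finitely many objects at a time.
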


In \cite{Bapat20}, it is suggested that a compactification of the quotient $\operatorname{Stab}(\C)/\mathbb{C}$ of $\operatorname{Stab}(\C)$ by a natural action of $\mathbb{C}$ may be constructed as the closure of its image in $\operatorname{Rank}(\C)/\BR^{\times}$. 
\subsection{Simple triangulated categories} The most basic examples of rank functions come from \emph{simple} triangulated categories.
\begin{defi}
	We say that a triangulated category $\C$ is \emph{simple} if every exact triangle in $\C$ is split (by which we mean that it is isomorphic to a direct sum of triangles having an isomorphism as one of their arrows) and $\C$ is generated as a triangulated category by one non-zero indecomposable object. 
\end{defi} 
Simple triangulated categories are in 1-1 correspondence with \emph{graded skew-fields}, i.e. graded rings whose non-zero homogeneous elements are invertible.
\begin{prop}\label{prop:simplefield} A triangulated category $\C$ is simple if and only if
	$\C$ is equivalent to $\Perf(K)$ where $K$ is a graded skew-field (i.e. the category of finite-dimensional vector $K$-spaces).
\end{prop}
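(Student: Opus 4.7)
If $K$ is a graded skew-field, then every graded $K$-module is free, so every object of $\Perf(K)$ is a finite direct sum $\bigoplus_i \Sigma^{n_i} K$, every exact triangle splits, and $K$ is an indecomposable generator; this gives the easy direction. For the converse, assume $\C$ is simple with indecomposable generator $X$, and set
$$K := \End_\C^*(X) = \bigoplus_{n\in\BZ} \Hom_\C(X, \Sigma^n X),$$
regarded as a graded ring under composition. To show $K$ is a graded skew-field, take a non-zero homogeneous $f\colon X \to \Sigma^n X$ and consider the triangle $X \xrightarrow{f} \Sigma^n X \to \cone(f) \rightsquigarrow$. Simplicity forces this to decompose as a direct sum of basic triangles each having an isomorphism for one arrow; since $X$ and $\Sigma^n X$ are indecomposable and $f \neq 0$, the only possibility is that $\cone(f) = 0$ and $f$ is itself an isomorphism.

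Next, I would define $\Phi \colon \C \to \Perf(K)$ by $\Phi(Y) := \Hom_\C^*(X, Y)$, made into a right $K$-module by precomposition. This functor is additive, commutes with shifts, and (since every triangle in $\C$ splits and $\Phi$ is additive) sends distinguished triangles to distinguished split triangles in $\Perf(K)$, so is triangulated. Since $K$ is a graded skew-field, every graded $K$-module is free, and $\Perf(K)$ coincides with the additive subcategory $\mathcal{P}_0$ of finite sums $\bigoplus \Sigma^{n_i} K$. An analogous argument on the source side --- a Krull--Schmidt computation using that $\End_\C^0(X) = K_0$ is a skew-field --- shows the additive subcategory $\C_0 \subseteq \C$ spanned by finite sums $\bigoplus \Sigma^{n_i} X$ is closed under direct summands; it is therefore also closed under cones (cones of maps in $\C_0$ are summands of the target, because triangles split) and hence a thick subcategory containing $X$, which by minimality equals $\C$. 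The restriction $\Phi \colon \C = \C_0 \to \mathcal{P}_0 = \Perf(K)$ sends $\Sigma^n X$ to $\Sigma^n K$ (essential surjectivity) and identifies both $\Hom_\C(\Sigma^m X, \Sigma^n X)$ and $\Hom_{\Perf(K)}(\Sigma^m K, \Sigma^n K)$ with $K_{n-m}$ (full faithfulness), yielding the desired equivalence $\C \simeq \Perf(K)$.

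The main obstacle is the Krull--Schmidt step: verifying that every idempotent in $\End_\C^0\bigl(\bigoplus \Sigma^{n_i} X\bigr)$ splits the sum into further shifts of $X$. This reduces to the standard ungraded fact that idempotents in a matrix algebra over a skew-field correspond to free direct summands, but requires a little care to organize when the support of $K$ in $\BZ$ is a non-trivial subgroup, since then different shifts $\Sigma^{n_i} X$ and $\Sigma^{n_j} X$ may become isomorphic in $\C$ and the endomorphism ring is a product of matrix algebras over $K_0$ indexed by cosets of this subgroup.
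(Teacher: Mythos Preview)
Your proposal is correct and follows essentially the same approach as the paper: both argue that for a nonzero homogeneous $f\colon X\to\Sigma^n X$ the split triangle on $f$ forces $f$ to be an isomorphism, so $K=\End_\C^*(X)$ is a graded skew-field, and then identify $\C$ with $\Perf(K)$. The paper simply asserts the equivalence once $K$ is known to be a graded skew-field, whereas you spell out the functor $\Hom_\C^*(X,-)$ and the Krull--Schmidt argument showing every object of $\C$ is a finite sum of shifts of $X$; this extra detail is correct and fills in what the paper leaves implicit.
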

\begin{proof}
	Let $\C$ be a simple triangulated category with an indecomposable generator $X$ and let $f:X\to \Sigma^n X$ be a morphism. Then the exact triangle
	$X\stackrel{f}{\to}\Sigma^nX\to \text{cone}({f})\rightsquigarrow$ is contractible, that is one of its arrows contains an isomorphism as a direct summand. Since $X$ is indecomposable, this can only happen if $f$ is an isomorphism or zero. We conclude that the graded ring $\End^*(X)$ is a graded skew-field and $\C$ is equivalent to the category of graded vector spaces over it. Conversely,
	all monomorphisms and epimorphisms of graded modules over a graded field $K$ split and it follows that 	$\Perf(K)$ is a simple triangulated category.
\end{proof}	
\begin{rem}
	Graded skew-fields can easily be classified in terms of ordinary skew-fields. Indeed, let $K$ be a graded skew-field; then $K_0$, its zeroth component is an (ungraded) skew-field. Suppose that $K_0\neq K$ and let $d$ be the smallest positive integer such that $K$ has a nonzero, hence invertible, element of degree $d$; denote this element by $t$.  Then $K$ is isomorphic as a $K_0$-module to $K_0[t, t^{-1}]$ and if $t$ is central then this isomorphism is multiplicative. In general $K$ will be isomorphic to a skew Laurent polynomial ring $K_0[t, t^{-1}]^\sigma$ where $\sigma$ is an automorphism of $K_0$ and the multiplication is determined by the rule $ta =\sigma(a)t$ for $a\in K_0$.
	
	The graded skew-field of the form $K=K_0[t, t^{-1}]^\sigma$ with $|t|=d$ will be called $d$-periodic and $d$ is called the \emph{period} of $K$; furthermore we adopt the convention that an ungraded skew-field $K=K_0$ has infinite period. The corresponding simple triangulated category $\Perf(K)$ is $d$-periodic i.e. the functor $\Sigma^d$ on $\Perf(K)$ is naturally isomorphic to the identity.
\end{rem}
Periodic rank functions on simple triangulated categories are essentially unique. More precisely, we have the following result.
\begin{prop}\label{prop:primesimple}
	The space of $d'$-periodic rank functions on a $d$-periodic simple triangulated category is isomorphic to
	$\BR_{\geq 0}(d')$ if $d=\infty$ or $d'$ is a divisor of $d$; otherwise it is $\{0\}$.
	If the space is nonzero, there exists a unique (up to multiplication by a power of $q$) prime rank function.
\end{prop}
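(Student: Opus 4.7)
The plan is to reduce the classification to the algebra of the single value $\alpha:=\rho(\id_X)$ where $X$ is an indecomposable generator. By Proposition~\ref{prop:simplefield} we may take $\C=\Perf(K)$ for a graded skew-field $K$ of period $d$, with $X=K$; then every object of $\C$ is a finite direct sum of shifts $\Sigma^i X$. Additivity~(\ref{O2}) and periodic translation invariance~(\ref{Op1}) yield
\[
\rho\Bigl(\bigoplus_k\Sigma^{i_k}X\Bigr)=\Bigl(\sum_k q^{i_k}\Bigr)\alpha,
\]
so $\rho$ is determined on objects by $\alpha\in\BR_{\geq 0}(d')$, and by formula~(\ref{eq:objectmor}) it is in principle determined on morphisms too.

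First I would identify the constraint that the categorical periodicity $\Sigma^d X\cong X$ (for $d<\infty$) imposes on $\alpha$: combined with~(\ref{Op1}) it forces $(q^d-1)\alpha=0$ in $\BR(d')$. This constraint is vacuous in two situations: when $d=\infty$ (no such isomorphism); and when $d'$ divides $d$, since then $q^{d'}=1$ in $\BR(d')$ already implies $q^d-1=0$. In these two situations I would verify the converse, that every $\alpha\in\BR_{\geq 0}(d')$ extends to a bona fide $d'$-periodic rank function. The object-level axioms~(\ref{O2}), (\ref{Op1}) hold by construction; the triangle inequality~(\ref{Op3}) is trivially satisfied since every triangle in $\C$ splits (so the connecting morphism is zero and $\phi=0$ works). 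To pass from objects to morphisms I would invoke Proposition~\ref{prop:objectmorphism} when $q+1$ is a non-zero-divisor in $\BR(d')$ (e.g.\ $d'=\infty$ or $d'$ odd), and otherwise perform a direct construction exploiting the matrix presentation of morphisms in $\Perf(K)$; axioms~(\ref{M4}) and~(\ref{M5}) then reduce to the classical rank inequalities for matrices over the ungraded skew-field $K_0$, rescaled by $\alpha$.

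The hardest step is the remaining case $d<\infty$ with $d'\nmid d$, where the proposition demands $\alpha=0$. I would work in the cyclotomic decomposition $\BR(d')\cong\prod_{e\mid d'}\BR[q]/\Phi_e(q)$ to analyse the constraint: $q^d-1$ is invertible on the factors with $e\nmid d$ (forcing $\alpha$ to vanish there) and is zero on those with $e\mid\gcd(d,d')$. The main challenge is to combine this partial vanishing with the positivity requirements---both $\alpha$ and every shift $q^i\alpha$ must lie in $\BR_{\geq 0}(d')$---together with the full force of the morphism-level axioms~(\ref{M4}) and~(\ref{M5}), so as to exclude the residual contributions on the surviving cyclotomic factors. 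This is the step I expect to require the bulk of the technical work.

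For the final assertion, once the classification is in hand, primality (integrality together with $\rho(\id_Y)=1$ for some generator $Y$) forces $\alpha=q^i$ for some $i$, giving uniqueness up to multiplication by a power of $q$.
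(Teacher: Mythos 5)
Your reduction to $\Perf(K)$ and the identification of $(q^d-1)\alpha=0$ (for $d<\infty$, with $\alpha:=\rho(\id_X)$) as the operative constraint are both correct, and in fact more careful than the paper's own proof: the paper simply asserts that $\rho(X)=q^d\rho(X)$ with $\rho\neq 0$ forces $q^d=1$ in $\BR(d')$, but that inference would require $\rho(X)$ to be a non-zero-divisor, which is not guaranteed. Your ``direct construction'' also works exactly as you anticipate. Because $\Hom(\Sigma^aK,\Sigma^bK)=0$ unless $d\mid(a-b)$, any morphism $f$ in $\Perf(K)$ decomposes as a block sum $\bigoplus_{r=0}^{d-1}f_r$ of matrices over $K_0$, and one gets $\rho(f)=\alpha\sum_{r=0}^{d-1}q^r\operatorname{rank}_{K_0}(f_r)$; all of (M2)--(M5) then reduce to classical rank inequalities over $K_0$ together with $q^r\alpha\in\BR_{\geq 0}(d')$, and so hold for every $\alpha\in\BR_{\geq 0}(d')$ satisfying $(q^d-1)\alpha=0$.

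The step you flag as the hardest, however, cannot be carried out, because there is nothing left for positivity or (M4)/(M5) to exclude: the set of admissible $\alpha$'s is exactly the cone $\{\alpha\in\BR_{\geq 0}(d'):(q^d-1)\alpha=0\}$, and this is never $\{0\}$ when $d<\infty$. Indeed $\alpha:=c(1+q+\cdots+q^{d'-1})$ with $c>0$ is nonzero, lies in $\BR_{\geq 0}(d')$, and annihilates $q^d-1$ because already $(q-1)\alpha=c(q^{d'}-1)=0$. Hence for $d<\infty$ and $d'\nmid d$ the space of $d'$-periodic rank functions on a $d$-periodic simple triangulated category is nonzero, and (since no $q^i$ annihilates $q^d-1$ in this case) it contains no prime rank function; both the ``otherwise $\{0\}$'' clause and the final sentence of the proposition appear to fail. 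So the gap you located in the paper's argument is real, and it cannot be closed along the lines you sketch, because the statement itself is not correct in that range. Your argument for the $d=\infty$ and $d'\mid d$ cases is sound and matches the intended proof.
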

\begin{proof}
	Let $\C$ be a simple $d$-periodic triangulated category with a generator $X$ and $\rho$ be a non-zero $d^\prime$-periodic rank function on $\C$; assume that $d^\prime<\infty$. Then $\rho(X)\in \BR_{\geq 0}(d')$ so that $d^\prime$ is the smallest integer with $q^{d^\prime}=1$. If $d<\infty$ then
	$X\cong\Sigma^dX$ and $\rho(X)=\rho(\Sigma^dX)=q^d\rho(X)$ so that $q^d=1$ from which it follows that $d^\prime$ divides $d$.  Furthermore, any other $d^\prime$-periodic rank function can be obtained from $\rho$ by multiplying it with an arbitrary element of $\BR_{\geq 0}(d')$ and rank functions corresponding to different elements in $\BR_{\geq 0}(d')$ will be different. The argument with $d^\prime=\infty$ is similar. Finally, if a non-zero rank function on $\C$ exists, then the condition $\rho(X)=1$ specifies it uniquely.
\end{proof}
\begin{cor}\label{cor:simplerank}
A simple triangulated category admits a unique prime rank function.
\end{cor}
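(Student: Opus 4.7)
The plan is simply to specialize Proposition \ref{prop:primesimple} to the case $d' = 1$, which corresponds to an ordinary (1-periodic) rank function as appears in Definition \ref{def:objectrank1per}. Let $\C$ be simple; by Proposition \ref{prop:simplefield} it is equivalent to $\Perf(K)$ for some graded skew-field $K$ of some period $d\in\{1,2,\ldots\}\cup\{\infty\}$. Since the divisor $1$ satisfies the hypothesis of Proposition \ref{prop:primesimple} (either $d=\infty$ or $1\mid d$), the space of 1-periodic rank functions on $\C$ is $\R_{\geq 0}(1)=\R_{\geq 0}$, which is nonzero, and Proposition \ref{prop:primesimple} asserts the existence of a prime rank function.

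For uniqueness, the Proposition only gives uniqueness up to multiplication by a power of $q$. The key point is that in $\R(1)=\R[q]/(q-1)$ we have $q=1$, so multiplication by any power of $q$ is the identity operation and the qualifier becomes vacuous. Hence the prime rank function is literally unique.

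It may be worth spelling out concretely why primeness pins down the normalization. Writing $X_0$ for the indecomposable generator of $\C$, every object is isomorphic to a finite sum of suspensions of $X_0$, and any nonzero object is a generator. A 1-periodic rank function is determined by the single value $r=\rho(X_0)\in\R_{\geq 0}$, and the integrality condition forces $r\in\Z_{\geq 0}$. For a generator $X\cong\bigoplus_i\Sigma^{n_i}X_0$ with $m$ summands we have $\rho(X)=mr$, so the requirement that some generator satisfy $\rho(X)=1$ yields $mr=1$ with $m,r\in\Z_{>0}$, forcing $r=1$. The anticipated obstacle is essentially null: once one notices that $q$ collapses to $1$ in the 1-periodic setting, the corollary is immediate from Proposition \ref{prop:primesimple}.
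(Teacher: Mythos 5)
Your proof is correct and takes the same approach as the paper, which simply specializes Proposition \ref{prop:primesimple} to the case $q=1$. Your additional explanation — that the ``up to a power of $q$'' ambiguity collapses in $\R(1)$, and the explicit check that primeness forces $\rho(X_0)=1$ — is a useful elaboration of the same argument, not a different route.
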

\begin{proof}	
This follows from Proposition \ref{prop:primesimple} by specializing $q=1$.
\end{proof}
\begin{rem}
	Recall that by Proposition \ref{prop:simplefield} a simple triangulated category is equivalent to the category of finite-dimensional vector spaces over a (possibly graded) skew-field. Clearly the unique rank function of Corollary \ref{cor:simplerank} is just the dimension of a finite-dimensional vector space.
\end{rem}
\begin{prop}\label{prop:faithfulsimple}
If a triangulated category admits a morphism-faithful prime rank function then it is simple.
\end{prop}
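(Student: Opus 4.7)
Let $\rho$ be a morphism-faithful prime rank function on $\C$ and fix a generator $X$ with $\rho(X) = 1$. The plan is to identify $\C$ with $\Perf(K)$ for the graded endomorphism ring $K = \bigoplus_n \Hom_\C(X, \Sigma^n X)$, verify that $K$ is a graded skew-field, and then invoke Proposition \ref{prop:simplefield}. First, $X$ is indecomposable: if $X = X_1 \oplus X_2$ nontrivially, then morphism-faithfulness together with integrality force $\rho(\id_{X_i}) \geq 1$ for $i=1,2$, contradicting $\rho(X) = 1$ via additivity (\ref{O2}).

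The heart of the argument is to show that every nonzero morphism $\phi: X \to \Sigma^n X$ is an isomorphism. Lemma \ref{lem:boundmorphismrank} gives $\rho(\phi) \leq \rho(X) = 1$, while morphism-faithfulness and integrality give $\rho(\phi) \geq 1$, so $\rho(\phi) = 1$. In the exact triangle $X \xrightarrow{\phi} \Sigma^n X \xrightarrow{g} \cone(\phi) \rightsquigarrow$, rank-nullity (\ref{M3}) yields $\rho(g) = \rho(\id_{\Sigma^n X}) - \rho(\phi) = 0$, so $g = 0$ by morphism-faithfulness. A standard long-exact-sequence argument then shows $\phi$ is a split epimorphism, exhibiting $\Sigma^n X$ as a direct summand of the indecomposable nonzero $X$; hence $\phi$ is an isomorphism. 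Thus $K$ is a graded skew-field.

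Next, I would show every object of $\C$ is a finite direct sum of shifts of $X$. Since $X$ generates $\C$ as a thick subcategory, it suffices to check that cones of morphisms between finite sums of shifts of $X$, together with retracts of such sums, are again of the same form. This reduces to graded linear algebra over the graded skew-field $K$: any matrix of morphisms can be brought to diagonal form by invertible row and column operations, so the cone decomposes as a direct sum of elementary cones (each either zero or a two-term sum of shifts of $X$), and retracts split into such sums analogously.

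Consequently, the functor $\Hom^{*}_\C(X,-):\C \to \Perf(K)$ is an equivalence of triangulated categories, and Proposition \ref{prop:simplefield} finishes the argument. The only conceptually nontrivial step is the second paragraph — turning the numerical equality $\rho(\phi) = \rho(\Sigma^n X)$ into the algebraic statement that $\phi$ is invertible via rank-nullity, morphism-faithfulness, and indecomposability of $X$; once this is established, the rest is linear algebra over a graded skew-field.
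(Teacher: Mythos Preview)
Your proof is correct and follows essentially the same strategy as the paper: both arguments reduce to showing that every nonzero graded endomorphism of the generator $X$ is invertible, so that $\End_*(X)$ is a graded skew-field. The paper's version is slightly slicker at one spot: instead of first proving $X$ indecomposable and then using $g=0$ plus indecomposability to conclude $\phi$ is an isomorphism, the paper notes that \emph{both} remaining maps $g$ and $h$ in the triangle $\Sigma^{-1}Y \xrightarrow{h} \Sigma^n X \xrightarrow{\phi} X \xrightarrow{g} Y$ have rank zero (by two applications of (\ref{M3})), hence both vanish by morphism-faithfulness, and a distinguished triangle with two consecutive zero maps forces the third to be an isomorphism---no separate indecomposability argument needed. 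Conversely, you spell out the passage from ``$\End_*(X)$ is a graded skew-field'' to ``$\C$ is simple'' (via graded linear algebra and Proposition~\ref{prop:simplefield}), which the paper leaves implicit.
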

\begin{proof}
	Let $\C$ be a triangulated category having a generator $X$ and a morphism-faithful rank function $\rho$ with $\rho(X)=1$. Let $f:\Sigma^n X\to X$ be any non-zero morphism of some degree $n$; by (\ref{M3}), $\rho(f)$ is either $1$ or $0$, by morphism-faithfulness the second possibility is ruled out so $\rho(f)=1$. Considering the exact triangle
	\[
	\xymatrix{\Sigma^{-1}Y\ar^h[r]&\Sigma^d X\ar^f[r]&X\ar^g[r]&Y}
	\]
	and using (\ref{M3}) again we conclude that $\rho(h)=\rho(g)=0$ and then by morphism-faithfulness $g=h=0$ and $f$ is an isomorphism. Thus, any nonzero element in $\End(X)$ is invertible, i.e. $\End(X)$ is a graded skew-field.
\end{proof}	
\begin{defi}A dg algebra $A$ is called a dg skew-field if its homology $H(A)$ is a graded skew-field. Similarly $A$ is a simple Artinian dg algebra if $H(A)$ is a graded simple Artinian algebra (i.e. it is isomorphic to the graded matrix algebra $M_n(K)$ over some graded skew-field $K$).
\end{defi}
The derived categories of dg skew-fields and dg Artinian simple algebras are all simple:
\begin{prop}\
\label{prop:simple}\begin{enumerate}\item  Let $A$ be a simple Artinian dg algebra so that $H(A)\cong M_n(K)$ where $K$ is a graded skew-field of period $d$ and $n$ is some integer. Then $\Perf(A)$ is a simple triangulated category of period $d$. \item Conversely, if $A$ is a dg algebra for which $\Perf(A)$ is simple, then $A$ is a simple dg Artinian algebra.
\end{enumerate}
\end{prop}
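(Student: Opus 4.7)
The plan is to exhibit in part (1) a compact generator of $\D(A)$ whose derived endomorphism algebra is a dg skew-field, reducing via dg Morita theory to a lemma about $\Perf$ of a dg skew-field; and in part (2) to decompose $A$ itself in $\Perf(A)$ and compute its endomorphism algebra.

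For part (1), assume $H(A)\cong M_n(K)$ with $K$ a graded skew-field of period $d$. Since $H(A)=M_n(K)$ is graded semisimple, $\operatorname{Ext}^{\geq 1}_{H(A)}(V,W)=0$ for all graded $H(A)$-modules $V,W$. A standard obstruction-theoretic argument (equivalently, passing to the minimal $A_\infty$-model of $A$ in the sense of Kadeishvili) then realizes the simple right $H(A)$-module $K^n$ as the homology of a compact dg $A$-module $S$. The universal coefficient spectral sequence
$$E_2^{p,q}=\operatorname{Ext}^p_{H(A)}(H(M),H(N))^q\Rightarrow H^{p+q}(\RHom_A(M,N))$$
collapses on the $E_2$-page for the same reason, so $H^*(\REnd_A(S))\cong\End^*_{H(A)}(K^n)$ is a graded skew-field of period $d$, and moreover the evident graded isomorphism $H(A)\cong (K^n)^{\oplus n}$ lifts to a quasi-isomorphism $A\simeq S^{\oplus n}$ in $\D(A)$. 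In particular $S$ is a compact generator of $\D(A)$, so dg Morita theory yields an equivalence $\Perf(A)\simeq\Perf(B)$ with $B:=\REnd_A(S)$.

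It therefore suffices to prove the following auxiliary statement, which also handles the dg skew-field case directly: if $B$ is any dg algebra with $H(B)$ a graded skew-field $L$ of period $d$, then $\Perf(B)$ is simple of period $d$. Every graded $L$-module is free, so for $M\in\Perf(B)$ we have $H(M)\cong\bigoplus_{i=1}^r\Sigma^{k_i}L$, and the (now trivially collapsing) spectral sequence lifts the identity to a quasi-isomorphism $M\simeq\bigoplus_{i=1}^r\Sigma^{k_i}B$. Thus every perfect $B$-module is a finite direct sum of suspensions of $B$, so $\Perf(B)\simeq\Perf(L)$, and the latter is simple of period $d$ by Proposition \ref{prop:simplefield}. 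For part (2), suppose $\Perf(A)$ is simple; by Proposition \ref{prop:simplefield}, $\Perf(A)\simeq\Perf(K)$ for some graded skew-field $K$ of period $d$, and every object is a finite direct sum of suspensions of the indecomposable generator $X$ with $\End^*_{\D(A)}(X)\cong K$. Applying this to $A$ itself yields $A\cong\bigoplus_{i=1}^N\Sigma^{k_i}X$ in $\D(A)$. Since $\End_A(A)\cong A$ as dg algebras via left multiplication of $A$ on itself (and $A$ is cofibrant over itself), $A\simeq\REnd_A(A)$ as dg algebras, and
$$H^*(A)\cong\bigoplus_{i,j=1}^N\Sigma^{k_j-k_i}K$$
with multiplication given by composition. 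This is the graded endomorphism algebra of $V:=\bigoplus_{i=1}^N\Sigma^{k_i}K$, hence a graded matrix algebra $M_N(K)$, which is graded simple Artinian; so $A$ is a simple Artinian dg algebra.

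The principal obstacle is the lifting step in part (1): constructing the $A$-module $S$ and the splitting $A\simeq S^{\oplus n}$ over a dg algebra whose intrinsic structure is known only up to quasi-isomorphism. Both steps rest on the graded semisimplicity of $H(A)=M_n(K)$, which annihilates all positive-degree $\operatorname{Ext}$ groups and collapses the universal coefficient spectral sequence. Once this technical input is cleared, the remainder of the proof is essentially formal, and the Morita reduction to the dg skew-field case treats the matrix and skew-field situations uniformly.
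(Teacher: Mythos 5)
Your proof is correct, but it takes a more roundabout path than the paper, particularly in part (1). You construct a compact generator $S$ with $H(S)\cong K^n$ by an obstruction-theoretic/$A_\infty$ argument and then reduce via dg Morita theory to the case of a dg algebra with graded skew-field homology. The paper avoids both steps: it simply splits a primitive idempotent $e\in M_n(K)_0=H_0(A)\cong\End_{\Perf(A)}(A)$ (using that $\Perf(A)$ is idempotent complete) to obtain the retract $X:=eA$ of $A$, with $H(X)=eH(A)\cong K^n$ automatically simple. This produces the compact simple object at essentially no cost, and there is then no need to pass to $B=\REnd_A(S)$: one checks directly — using the same graded-semisimplicity input that collapses your spectral sequence — that every perfect $A$-module decomposes into shifted copies of $X$ and that morphisms are detected on homology, whence $\Perf(A)$ is additively equivalent to finitely generated graded $H(A)$-modules and every exact triangle splits. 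Your Morita detour is sound and isolates a clean auxiliary lemma, but at the cost of invoking dg Morita theory already in part (1), which the paper reserves for part (2). In part (2) the two arguments essentially coincide: both pick the indecomposable generator $X$, observe $\End^*(X)$ is a graded skew-field, decompose $A$ into shifts of $X$, and recover $H(A)$ as a graded matrix algebra; the paper does this through the dg Morita equivalence with $\Mod B$ for $B=\End_A(X)$, while you compute $H^*(\REnd_A(A))$ directly from $A\simeq\bigoplus_i\Sigma^{k_i}X$. One small caution: $\End^*_K\bigl(\bigoplus_i\Sigma^{k_i}K\bigr)$ need not be the standard $M_N(K)$ with entries concentrated in degree $0$ unless all $k_i$ agree modulo the period of $K$; but the graded endomorphism algebra of any finite graded free $K$-module is graded simple Artinian in the sense intended, so the conclusion stands.
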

\begin{proof}
The homology graded ring $H(A)$ of $A$ is a graded matrix algebra over some graded skew-field $K$. A primitive idempotent $e\in H(A)$ determines a retract $X:=eA$ in $\Perf(A)$. The $H(A)$-module $H(X)$ is a simple generator of the category of $H(A)$-module. Given a perfect dg $A$-module $M$, its homology $H(M)$ is a graded $H(A)$-module that is a finite direct sum of $H(A)$-modules $H(X)$ and it is clear that this decomposition lifts to $\Perf(A)$ so that $M$ is a direct sum of copies of $X$ in $\Perf(A)$. Similarly, a map $M\to N$ of $A$-modules is determined by the map $H(M)\to H(N)$ of $H(A)$-modules and it follows that any exact triangle in $\Perf(A)$ splits. Thus, $\Perf(A)$ is simple and its period clearly coincides with that of $K$. This proves (1).

For (2) let $A$ be a dg algebra for which $\Perf(A)$ is simple; denote by $X$ an indecomposable generator of $\Perf(A)$ and assume without loss of generality that $X$ is a cofibrant $A$-module. Since every nonzero map $X\to\Sigma^n X$ is invertible in $\Perf(A)$, the dg algebra $B:=\End_A(X)$ is a dg skew-field. Then the functor $F:M\mapsto M\otimes_A X$ is a dg equivalence between the categories
of cofibrant left $A$-modules and cofibrant $B$-modules and therefore the dg algebras $A\cong\End_A(A)$ and $\End_B(F(A))=\End_B(X)$  are quasi-isomorphic. Since $B$ is a dg skew-field, the $B$-module $X$ is a direct sum of simple $B$-modules and thus, $\End_B(X)$ is a matrix algebra of a dg skew-field, i.e a dg simple Artinian ring.
\end{proof}
\begin{rem}
Another notion of a simple dg algebra was considered in a recent paper \cite{Orlov19}. Orlov's notion is much stronger than ours, in that it is quasi-isomorphic to an ordinary simple algebra. A simple Artinian dg algebra in our sense or even a dg skew-field of finite period is \emph{not} determined up to quasi-isomorphism by its homology algebra. Examples of non-formal $A_\infty$ algebras  whose homology algebras are skew-fields are not hard to construct (cf. for example even Moore algebras of \cite{Lazarev03}). This situation arises also in stable homotopy theory where Eilenberg-MacLane spectra of graded fields and  Morava $K$-theories have equivalent simple homotopy categories of modules yet are very different in many ways, e.g. they have inequivalent homotopy categories of bimodules. 
\end{rem}

\section{Rank functions on perfect derived categories of ordinary rings}\label{section:Sylvester}
We start with a short review of ordinary Sylvester rank functions, following \cite{Sch86}. This theory and its applications motivated us to develop its derived analogue. 
\subsection{Sylvester rank functions} 
Let $A$ be a ring. Denote by $\fp(A)$ the category of finitely presented $A$-modules, and by $\proj(A)$ the subcategory of finitely generated projective $A$-modules.

\begin{defi}\label{def:Sylvester-morphism}
	A {\em Sylvester morphism rank function} on $A$ associates to each morphism $f$ of $\proj(A)$ a {\em rank}
	$\rho(f)\in \BR_{\geq 0}$. It is required to satisfy the following conditions.
	\begin{itemize}
		\item[] {\bf Normalization:} \begin{equation}\tag{m1}\label{m1}\rho(\id_A)=1;\end{equation}
		\item[] {\bf Additivity:} for any morphisms $f$ and $g$, we have
		\begin{equation}\tag{m2}\label{m2}
			\rho(f\oplus g)=\rho(f)+\rho(g);
		\end{equation}
		\item[] {\bf Triangular inequality:} for all morphisms $f$, $g$ and $h$ such that $g$ and $h$ share the same domain and $f$ and $h$ share the same codomain, we have \begin{equation}\tag{m3}\label{m3}\rho\left(\begin{matrix} f & h \\ 0 & g\end{matrix}\right)\geq\rho(f)+\rho(g);\end{equation}
		\item[] {\bf Ideal condition:} for any morphisms $f$ and $g$ for which the composition $gf$ is defined, we have
		\begin{equation}\tag{m4}\label{m4}\rho(gf)\leq\rho(f) \quad \text{and} \quad \rho(gf)\leq\rho(g)\end{equation}
	\end{itemize}
\end{defi}

\begin{defi}\label{def:Sylvester-object}
	A {\em Sylvester module rank function} on $A$ associates to each finitely presented $A$-module $M$ a {\em rank} $\rho(M)\in\BR_{\geq 0}$, such that
	\begin{itemize}
		\item[] {\bf Normalization:} \begin{equation}\tag{o1}\label{o1}\rho(A)=1;\end{equation}
		\item[] {\bf Additivity:} for all finitely presented modules $M$ and $N$, we have
		\begin{equation}\tag{o2}\label{o2}\rho(M\oplus N) = \rho(M)+\rho(N);\end{equation}
		\item[] {\bf Triangle Inequality:} for any exact sequence $$L\to M \to N \to 0$$ of finitely presented $A$-modules, we have
		\begin{equation}\tag{o3}\label{o3}\rho(N) \leq \rho(M) \leq \rho(L)+\rho(N).\end{equation}
	\end{itemize}
\end{defi}

Let
$$P\xrightarrow{f} Q \to M \to 0,$$
be an exact sequence of $A$-modules, with $P,Q\in\proj(A)$, so that $M\in\fp(A)$. 
The formulas
$$\rho(f) = \rho(Q)-\rho(M) \quad \text{and} \quad \rho(M)=\rho(\id_Q)-\rho(f),$$
yield a 1-1 correspondence between Sylvester morphism rank functions on $A$ and Sylvester object rank functions on $A$. 
So we just call the resulting function defined on both morphisms and objects a {\em Sylvester rank function} on $A$.
\begin{example}
	Let $A$ be a skew-field; then the category $\fp(A)$ is just the category of finite-dimensional vector spaces over $A$. It is clear that $\fp(A)$ has a unique Sylvester rank function given by the dimension of a vector space; a triangulated analogue of this obvious result is Corollary \ref{cor:simplerank}.
\end{example}
\begin{rem}
	A Sylvester rank function on $A$ may be recovered from its values on homomorphisms $f:A^m\to A^n$ between free modules, since every finitely presented module is isomorphic to the cokernel of such a map. By additivity, one may even restrict to the case of square matrices, i.e.\ the case $m=n$.
\end{rem}

Let $S$ be a simple Artinian ring. Then $S$ is isomorphic to a matrix algebra $M_n(K)$ over a skew-field $K$. Every object of $\fp(S)=\proj(S)$ is a finite direct sum of copies of the simple module $V=K^n$, and the unique Sylvester rank function on $\fp(S)$ takes the value $\frac{1}{n}$ on $V$.

Given a homomorphism $A\to B$, we obtain a right exact functor $\fp(A)\to\fp(B): M \mapsto B\otimes_A M$. It is easy to confirm that any Sylvester rank function on $B$ pulls back via this functor to a Sylvester rank function on $A$. In particular any homomorphism $A\to S$ into a simple Artinian ring determines a canonical Sylvester rank function on $A$. Two homomorphisms $A\to S$ and $A \to S'$ into simple Artinian rings are considered equivalent if they become equal after composing with homomorphisms $S\to S''$ and $S'\to S''$ into a third simple Artinian ring. Equivalent homomorphisms clearly determine the same Sylvester rank function on $\fp(A)$.

\begin{theorem}\label{thm:main1}
	There is a 1-1 correspondence between equivalence classes of homomorphisms of $A$ into simple Artinian rings $S$ and $\BQ$-valued Sylvester rank functions on $A$ taking the value $0$ or $1$ on $A/rA$ for any integer $r$. In this correspondence $S$ is a skew-field if and only if the corresponding rank function is $\BZ$-valued.
\end{theorem}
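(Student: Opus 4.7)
The plan is to adapt the classical Cohn--Schofield correspondence. In the forward direction, the pull-back construction described just before the theorem associates to a homomorphism $f\colon A\to S$ with $S\cong M_n(K)$ the rank function $\rho_f(M):=\ell_S(S\otimes_A M)/n$, where $\ell_S$ denotes length as an $S$-module. Since the unique Sylvester rank function on $\fp(S)$ takes values in $\frac{1}{n}\BZ$, the pullback $\rho_f$ is $\BQ$-valued. For the value on $A/rA$, note that $rS$ is a two-sided ideal in the simple ring $S$, hence equals $0$ or $S$, giving respectively $\rho_f(A/rA)=1$ or $\rho_f(A/rA)=0$. Equivalent homomorphisms manifestly yield the same rank function.

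For the inverse construction, given $\rho$ with the stated properties, I would employ universal matrix localization. Consider the class $\Sigma_\rho$ of square matrices over $A$ that are \emph{full}, meaning $\rho(\alpha)$ equals the size of $\alpha$. Axioms (\ref{m2})--(\ref{m4}) imply that the complement of $\Sigma_\rho$ forms a prime matrix ideal in the sense of Cohn \cite{Cohn95}. The universal localization $A\to A[\Sigma_\rho^{-1}]$ then admits a canonical epimorphism onto a simple Artinian ring $S$; the $\BQ$-valuedness supplies a least common denominator $n$ so that $S\cong M_n(K)$ for some skew-field $K$, while the condition $\rho(A/rA)\in\{0,1\}$ ensures that each integer is either zero or invertible in $K$, so the characteristic of $K$ is unambiguously determined by $\rho$.

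The main obstacle is verifying that these two constructions are mutually inverse modulo the equivalence of homomorphisms. For this I would invoke the universality of epic $R$-rings: any homomorphism $A\to S'$ into a simple Artinian ring inducing $\rho$ must send full matrices to invertible ones, hence factors through $A[\Sigma_\rho^{-1}]$, so $S'$ receives a map from $S$; a symmetric argument produces the reverse, and pushing out into a common epic $R$-ring realises the equivalence of $f$ and $f'$ \cite{Cohn95, Sch86}. The final claim is immediate from the construction: $S$ is a skew-field exactly when the common denominator $n$ equals $1$, which happens precisely when $\rho$ takes integer values.
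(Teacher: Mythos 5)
The paper offers no proof of this theorem: it simply refers to Schofield \cite{Sch86}, Chapter 7, Theorems 7.12 and 7.14, with a brief sketch of the inverse construction for \emph{integral} $\rho$ appearing in the remark that follows (quoting Theorem 7.5 of op.~cit.). Your sketch reconstructs precisely this Cohn--Schofield route, so, modulo the fact that the paper is a citation rather than a proof, the two agree in approach.

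That said, two steps in your sketch shoulder much more weight than you acknowledge. First, you take for granted that ``the $\BQ$-valuedness supplies a least common denominator $n$ so that $S\cong M_n(K)$.'' That a $\BQ$-valued Sylvester rank function has bounded denominator is not a formal consequence of the axioms; in Schofield's argument it emerges from the structure of the localized ring, and you invoke it before that ring is known to exist. Second, the assertion that $A[\Sigma_\rho^{-1}]$ ``admits a canonical epimorphism onto a simple Artinian ring $S$'' is really the whole content of the theorem. For $\BZ$-valued $\rho$, the Malcolmson--Cohn analysis shows the non-full matrices form a prime matrix ideal and the resulting localization is local with skew-field residue; that is Schofield's Theorem 7.5, leading to Theorem 7.12. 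For general $\BQ$-valued $\rho$ satisfying $\rho(A/rA)\in\{0,1\}$, Schofield's Theorem 7.14 reduces to the integral case by a Morita passage to $M_n(A)$, where the rescaled rank function becomes $\BZ$-valued---a step that requires $n$ to be identified first. As written, your sketch presents the $\BQ$-valued case as if the matrix-ideal calculus over $A$ itself directly produces $M_n(K)$; you would need either the $M_n(A)$ reduction or an independent verification that the epic $A$-ring you construct is semisimple Artinian rather than merely local. The forward direction and the verification that the constructions are mutually inverse are fine.
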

\begin{proof}
	See \cite{Sch86}, Chapter 7, Theorems 7.12 and 7.14.
\end{proof}	
\begin{rem}\label{rem:technicalcondition} 
	Let $\rho$ be a Sylvester rank function on $A$. Since $0\leq \rho(A/rA)\leq 1$ for any integer $r$, the condition on $\rho(A/rA)$ is automatically satisfied if $\rho$ is $\BZ$-valued. The same is true for an arbitrary Sylvester rank function on $A$ when $A$ is an algebra over a field, for then  $A/rA$ is isomorphic to either $A$ or $0$. See \cite{Sch86}, p. 121 for a discussion on this issue.
\end{rem}	
We already explained how to obtain a rank function from a homomorphism into a simple Artinian ring $S$, and since $S$ is an algebra over a field, namely its centre, it will satisfy the additional condition, c.f. Remark \ref{rem:technicalcondition}. The reverse construction in the case of an integral Sylvester rank function $\rho$ is described as follows. The Cohn localization of $A_\rho$ of $A$ with respect to all morphisms $f:P\to Q$ in $\proj(A)$ such that $\rho(f)=\rho(P)=\rho(Q)$, can be shown to be local with residue skew-field, say, $K$, cf. \cite[Theorem 7.5]{Sch86}. The desired homomorphism is the composition $A\to A_\rho \to K$.

\subsection{Derived rank functions}
Let $A$ be an ordinary ring. We consider periodic rank functions on the triangulated category $\Perf(A)$.  A $d$-periodic rank function $\rho$ on $\Perf(A)$ is called {\em normalized} if $\rho(A)=1$. We will denote by $\NRk_d(\Perf(A))$ and $\operatorname{Syl}(A)$ the set of normalized $d$-periodic rank functions on $\Perf(A)$ and Sylvester rank functions on $\fp(A)$ respectively. In this section we give a comparison between $\NRk_d(\Perf(A))$ and $\operatorname{Syl}(A)$.

\begin{theorem}\label{thm:Sylvesterderived}
	Let $A$ be an ordinary ring and $d\in\{1,2,\ldots\}\cup\{\infty\}$. The restriction of any normalized $d$-periodic rank function on $\Perf(A)$ to $\proj(A)$
	is a Sylvester morphism rank function on $A$. In the case $d=\infty$, this determines a 1-1 correspondence between normalized $\infty$-periodic rank functions on $\Perf(A)$ and Sylvester rank functions on $A$.
\end{theorem}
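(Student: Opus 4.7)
The plan is to verify the ``restriction is Sylvester'' direction by direct axiom verification, and for the $d=\infty$ bijection to construct an explicit inverse via the formula
\[
\rho(P_\bullet)=\sum_i q^i\sigma(P_i)-(1+q)\sum_i q^{i-1}\sigma(d_i)
\]
applied to any bounded complex $P_\bullet$ of finitely generated projectives.

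For the first direction, I would observe that the restriction of a normalized $d$-periodic rank function $\rho$ to $\proj(A)$ automatically takes values in $\R_{\geq 0}$: if $P\oplus P'\cong A^n$ in $\proj(A)$ then additivity forces $\rho(P)+\rho(P')=n\in\R_{\geq 0}$, and since both summands lie in $\R_{\geq 0}(d)$ each must reduce to a constant in $q$; the ideal condition~(\ref{M5}) then yields $\rho(f)\in\R_{\geq 0}$ for $f$ a morphism of finitely generated projectives. The Sylvester axioms~(\ref{m1})--(\ref{m4}) follow directly from the normalization $\rho(A)=1$ and (\ref{M2}), (\ref{M4}), (\ref{M5}) respectively.

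For the $d=\infty$ bijection, given a Sylvester rank function $\sigma$ on $A$, I define $\rho_\sigma$ on objects of $\Perf(A)$ by the displayed formula applied to any bounded projective representative. Well-definedness follows from the fact that any two such representatives of the same object are stably isomorphic via addition of contractible summands $[P\xrightarrow{\id}P]$ (a standard result, provable by combining mapping cylinders with a change-of-basis argument for split extensions with contractible quotient); the formula is unchanged under such an addition, since $(q^{i+1}+q^i)\sigma(P)-(1+q)q^i\sigma(\id_P)=0$. Axioms~(\ref{O2}) and~(\ref{Op1}) are immediate. The key step is verifying~(\ref{Op3}): for any exact triangle $X\to Y\to Z$ in $\Perf(A)$, the mapping cylinder construction yields a degree-wise split short exact sequence $0\to P_\bullet\to C_\bullet\to R_\bullet\to 0$ of bounded projective complexes representing the triangle, with $C_i=P_i\oplus R_i$ and differential $\partial_{C,i}$ of block form $\smash{\bigl(\begin{smallmatrix}d_{P,i}&\tau_i\\0&d_{R,i}\end{smallmatrix}\bigr)}$. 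A direct computation then gives
\[
\rho_\sigma(X)+\rho_\sigma(Z)-\rho_\sigma(Y)=(1+q)\sum_i q^{i-1}\bigl[\sigma(\partial_{C,i})-\sigma(d_{P,i})-\sigma(d_{R,i})\bigr],
\]
where each bracketed term is nonnegative by Sylvester's triangular inequality~(\ref{m3}). Proposition~\ref{prop:objectmorphism} then extends $\rho_\sigma$ uniquely to an $\infty$-periodic rank function on morphisms, and $\sigma(A)=1$ gives the normalization $\rho_\sigma(A)=1$.

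For injectivity of the restriction map, the same strict short exact sequence argument applied inductively to the stupid truncation triangles $P_\bullet^{<n}\to P_\bullet\to\Sigma^nP_n\rightsquigarrow$---whose representing split SES has $P_\bullet$ itself as the middle complex, with block differential degenerating to $d_n$ in the only relevant degree---forces any normalized $\infty$-periodic rank function on $\Perf(A)$ to satisfy the displayed formula, so it is determined by its restriction to $\proj(A)$. The main technical obstacle throughout is the mapping cylinder representation of an arbitrary exact triangle in $\Perf(A)$ as a degree-wise split SES of bounded projective complexes, and the careful tracking of the resulting block-form differentials so that the Sylvester axiom~(\ref{m3}) applies directly.
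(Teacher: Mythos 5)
Your treatment of the forward direction — restriction takes real values, axiom matching, the extension formula, and the verification of (\ref{Op3}) via a degree-wise split SES with block-triangular differentials and Sylvester's axiom (\ref{m3}) — is essentially identical to the paper's (your displayed formula is precisely the paper's (\ref{eq:extendrank}) rewritten). The well-definedness argument via contractible two-term summands is equivalent to the paper's mapping-cylinder argument.

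The uniqueness argument, however, has a genuine gap. For the stupid truncation triangle
$P_\bullet^{<n}\to P_\bullet\to\Sigma^n P_n\rightsquigarrow$ with connecting morphism $h:\Sigma^{n-1}P_n\to P_\bullet^{<n}$, axiom (\ref{Op3}) gives $\rho(P_\bullet)=\rho(P_\bullet^{<n})+q^n\rho(P_n)-(q+1)\rho(h)$, and you need $\rho(h)=q^{n-1}\rho(d_n)$ to close the induction. But ``the same short exact sequence argument'' does not supply this: that computation was a \emph{direct evaluation of the defined} $\rho_\sigma$, whereas here $\rho$ is arbitrary and $\rho(h)$ is a priori unknown. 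Projecting $P_\bullet^{<n}$ onto $\Sigma^{n-1}P_{n-1}$ (which \emph{is} a chain map) gives $\rho(h)\geq q^{n-1}\rho(d_n)$ via (\ref{M5}), and Lemma~\ref{lem:boundmorphismrank} confines $\rho(h)$ to a multiple of $q^{n-1}$, but the \emph{upper} bound $\rho(h)\leq q^{n-1}\rho(d_n)$ does not follow from any factorization available to you: the inclusion $\Sigma^{n-1}P_{n-1}\hookrightarrow P_\bullet^{<n}$ is not a chain map unless $d_{n-1}=0$, and $\rho(h)$ cannot be expressed purely in terms of $\rho|_{\proj(A)}$ from this single triangle. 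This is exactly the subtlety the paper's proof navigates by a more global analysis: it uses Lemma~\ref{lemma:perfrank2} to localize the contribution of the brutal truncations $X_{[2,\infty)}$ and $X_{(-\infty,-2]}$ in $q$-degree, reduces to a length-three complex $X_{[-1,1]}$, and then pins down the constant term by evaluating several (\ref{Op3}) relations simultaneously, including at $q=-1$. Your single-truncation induction does not reproduce that constraint, so as written the injectivity claim is unproved.
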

\begin{proof}
	By additivity, any normalised periodic rank function takes values in $\BR$ on any object in $\proj(A)$, and therefore on any morphism in $\proj(A)$, by Lemma \ref{lem:boundmorphismrank}.  
	The first statement then follows from a straightforward unraveling of definitions. The stronger statement for $d=\infty$ is a consequence of the Proposition \ref{prop:extendrank} below.
\end{proof}	

\begin{lem} \label{lemma:perfrank2}Let $\rho$ be a normalized $d$-periodic rank function on $\Perf(A)$. 
	Let $X$ be a bounded complex of $A$-modules with finitely generated projective terms $X_n$. Then
	$$\rho(X) \leq \sum_{n\in\BZ} \rho(X_n)q^n.$$
\end{lem}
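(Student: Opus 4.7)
The plan is to prove the inequality by induction on the length of $X$, i.e.\ the number of degrees $n$ in which $X_n\neq 0$. For the base case, where $X$ is concentrated in a single degree $n$, we have $X\cong\Sigma^n X_n$ in $\Perf(A)$, so translation invariance (\ref{Op1}) gives $\rho(X)=q^n\rho(X_n)$, matching the right-hand side exactly.

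For the inductive step, suppose $X$ has terms in degrees $[a,b]$ with $a<b$. The brutal truncation yields a short exact sequence of chain complexes
\[
0\to\sigma_{<b}X\to X\to \Sigma^b X_b\to 0,
\]
where $\sigma_{<b}X$ is the subcomplex obtained by replacing $X_b$ by $0$ and $\Sigma^b X_b$ denotes $X_b$ placed in degree $b$. Since every term is a finitely generated projective, this sequence is degreewise split, hence gives rise to an exact triangle $\sigma_{<b}X\to X\to\Sigma^b X_b\rightsquigarrow$ in $\Perf(A)$. The triangle inequality (\ref{Op3}) for $\rho$ — combined with (\ref{Op1}) applied to $\Sigma^b X_b$ — then yields
\[
\rho(X)\leq\rho(\sigma_{<b}X)+q^b\rho(X_b).
\]
Applying the inductive hypothesis to the shorter complex $\sigma_{<b}X$ gives $\rho(\sigma_{<b}X)\leq\sum_{n<b}\rho(X_n)q^n$, and combining these inequalities delivers the desired bound.

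I do not expect any serious obstacle. The only subtlety is to confirm that the triangle inequality (\ref{Op3}), phrased as $\rho(X)-\rho(Y)+\rho(Z)\in(q+1)\BR_{\geq 0}(d)$, does imply $\rho(Y)\leq\rho(X)+\rho(Z)$ in the partial order on $\BR_{\geq 0}(d)$; this is immediate since $(q+1)\BR_{\geq 0}(d)\subseteq\BR_{\geq 0}(d)$. The argument works uniformly for every $d\in\{1,2,\ldots\}\cup\{\infty\}$, with the sum $\sum_n\rho(X_n)q^n$ understood in $\BR_{\geq 0}(d)$.
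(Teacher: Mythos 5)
Your proof is correct and follows exactly the route the paper intends, which is stated only as a one-line remark: induction on the length of $X$ using the triangle inequality (\ref{Op3}), with the degreewise-split brutal truncation supplying the exact triangle at each step, and the observation that $(q+1)\BR_{\geq 0}(d)\subseteq\BR_{\geq 0}(d)$ converting (\ref{Op3}) into the needed inequality. No gaps.
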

\begin{proof}
	This follows by induction on the length of $X$ from the triangle inequality (\ref{Op3}).
\end{proof}	
\begin{prop} \label{prop:extendrank} Let $A$ be an ordinary ring. Any normalized $\infty$-periodic rank function $\rho$ on $\Perf(A)$ is determined by its restriction to $\proj(A)$. More precisely, given a bounded complex $X=\{X_n\}$ of  finitely generated projective $A$-modules with a differential $d_n:X_n\to X_{n-1}$, we have
	\begin{equation} \phantomsection\label{eq:extendrank}
\rho(X) = \sum_{n\in \BZ} \left(\rho(X_n)-\rho(d_{n})-\rho(d_{n+1})\right)q^n.
\end{equation}
	Moreover, any Sylvester rank function on $A$ extends (uniquely) via this formula to 
	a rank function $\rho$ on $\Perf(A)$. This extension will be referred to as the \emph{derived} rank function on $\Perf(A)$. 
\end{prop}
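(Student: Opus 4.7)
I would prove the two assertions separately. For the first, given a normalized $\infty$-periodic rank function $\rho$ on $\Perf(A)$, I argue by induction on the number of non-zero terms of $X$. The base case $X = X_n$ in a single degree reduces to $\rho(X) = q^n \rho(X_n)$ by (\ref{Op1}). For the inductive step, let $b$ denote the top degree of $X$ and let $X'$ be the complex obtained by deleting $X_b$. Then $X$ is identified with $\cone(d_b : \Sigma^{b-1} X_b \to X')$, where $d_b$ is regarded as a chain map from the one-term complex $\Sigma^{b-1} X_b$ to $X'$, giving an exact triangle
\[
X' \lra X \lra \Sigma^b X_b \rightsquigarrow
\]
whose connecting morphism is $d_b$ (placed in degree $b-1$). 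Applying the triangle inequality (\ref{Op3}) and translation invariance (\ref{Mp1}) gives $\rho(X) = \rho(X') + q^b \rho(X_b) - (q+1)q^{b-1}\rho(d_b)$, and comparing with the inductive hypothesis for $X'$ (in which the contribution of $d_b$ is absent) yields the formula.

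For the converse, define $\rho(X)$ on each bounded complex $X$ of finitely generated projectives via the formula (\ref{eq:extendrank}). To confirm that $\rho$ descends to a well-defined function on $\Perf(A)$, note that two such representatives of the same object differ by direct sum with a bounded contractible complex of finitely generated projectives, and every such contractible complex splits as a direct sum of elementary complexes $\{P \xrightarrow{\id_P} P\}$ concentrated in two adjacent degrees. A direct computation using (\ref{m2}) shows each elementary summand contributes $0$ to the formula. The axioms (\ref{Op1}) and (\ref{O2}) are then immediate from the shift of the summation index and term-by-term additivity of the Sylvester rank function. Once (\ref{Op3}) is established, Proposition~\ref{prop:objectmorphism} extends $\rho$ uniquely to a rank function on $\Perf(A)$. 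Finally, a short calculation using the formula for $\cone(f : P \to Q)$ with $P, Q \in \proj(A)$ shows that this rank function restricts to the original Sylvester rank function on $\proj(A)$.

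The main difficulty is verifying the triangle inequality (\ref{Op3}). Every exact triangle in $\Perf(A)$ is isomorphic to $X \xrightarrow{f} Y \to \cone(f) \rightsquigarrow$ for a chain map $f$ between bounded complexes of finitely generated projectives. Applying (\ref{eq:extendrank}) and (\ref{m3}) to the cone differentials $\delta_n = \bigl(\begin{smallmatrix} d^Y_n & f_{n-1} \\ 0 & -d^X_{n-1}\end{smallmatrix}\bigr)$ yields, after rearrangement,
\[
\rho(X) - \rho(Y) + \rho(\cone(f)) = (q+1)\Bigl(\rho(X) - \sum_{n}m_n q^{n-1}\Bigr),
\]
where $m_n := \rho(\delta_n) - \rho(d^Y_n) - \rho(d^X_{n-1}) \geq 0$. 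The remaining task is to show the factor in parentheses is non-negative, which reduces coefficient-by-coefficient to
\[
\rho(\delta_{k+1}) \;\leq\; \rho(X_k) + \rho(d^Y_{k+1}) - \rho(d^X_{k+1}).
\]
I would establish this by extending the Sylvester rank function to all $A$-modules (a standard procedure) and analysing $\im(\delta_{k+1})$ as a submodule of $Y_k \oplus X_{k-1}$: projection to $X_{k-1}$ identifies it with an extension of $\im(d^X_k)$ by $\im(d^Y_{k+1}) + f_k(\ker d^X_k)$. The chain map identity $f_k d^X_{k+1} = d^Y_{k+1} f_{k+1}$ forces $f_k(\im d^X_{k+1}) \subseteq \im(d^Y_{k+1})$, so the $f_k$-contribution modulo $\im(d^Y_{k+1})$ factors through $H_k(X) = \ker d^X_k / \im d^X_{k+1}$, whose rank equals $\rho(X_k) - \rho(d^X_k) - \rho(d^X_{k+1})$ by repeated rank-nullity. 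Assembling these bounds yields the desired inequality and completes the verification of (\ref{Op3}).
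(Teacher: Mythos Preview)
Your inductive approach to the first assertion contains a genuine gap. In the step
\[
\rho(X) \;=\; \rho(X') + q^b \rho(X_b) - (q+1)q^{b-1}\rho(d_b),
\]
you are implicitly claiming that the connecting morphism $g\colon \Sigma^{b-1}X_b \to X'$ has rank $q^{b-1}\rho(d_b)$, where $\rho(d_b)$ is the Sylvester rank of $d_b\colon X_b\to X_{b-1}$ in $\proj(A)$. This is not justified: $g$ is a morphism from a one-term complex into the \emph{multi-term} complex $X'$, and it does \emph{not} factor through the degree-$(b{-}1)$ inclusion $\Sigma^{b-1}X_{b-1}\hookrightarrow X'$ (that inclusion is not a chain map when $d_{b-1}\neq 0$). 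All the axioms give you is $\rho(g)=c\,q^{b-1}$ for some $c\in[0,\rho(X_b)]$, and by Proposition~\ref{prop:objectrank} the value of $c$ is determined by $\rho(\cone(g))=\rho(X)$, which is exactly what you are trying to compute. The argument is circular. The paper avoids this by a non-inductive analysis: it shows, using brutal truncations and Lemma~\ref{lemma:perfrank2}, that the constant term of $\rho(X)$ agrees with that of $\rho(X_{[-1,1]})$, and then extracts the latter explicitly from several short triangles.

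For the second assertion your strategy is workable but considerably heavier than necessary. Representing a triangle by a cone forces you to bound $\rho(\delta_{k+1})$ for the cone differential, which in turn leads you to extend the Sylvester rank to arbitrary modules and to analyse images and homology groups; this extension is not entirely routine (e.g.\ the ``rank-nullity'' identities you invoke for $\ker d_k^X$ and $H_k(X)$ require justification, since the extended rank is only subadditive on short exact sequences). The paper instead represents the triangle by a degreewise split short exact sequence $0\to X\to Y\to Z\to 0$ of complexes of projectives. Then $\rho(Y_n)=\rho(X_n)+\rho(Z_n)$ by (\ref{o2}) and $\rho(d_n^Y)\geq\rho(d_n^X)+\rho(d_n^Z)$ by (\ref{m3}), and (\ref{Op3}) drops out directly from (\ref{eq:extendrank}) and Lemma~\ref{lem:obvious}, with no extension of the rank beyond $\proj(A)$ required.
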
 
\begin{proof}
	Let us prove that given a Sylvester rank function $\rho$, formula (\ref{eq:extendrank}) gives a $\infty$-periodic rank function on $\Perf(A)$. Note that 
	 a bounded exact complex $X=\{\ldots\stackrel{d_n}{\leftarrow}X_n\stackrel{d_{n+1}}{\leftarrow}\ldots\}$ of finitely generated projective $A$-modules splits as a direct sum of elementary exact complexes of length two.
	 It is clear that for such a complex one has $\rho(X_n)=\rho(d_n)+\rho(d_{n+1})$ and it follows that $\rho(X)=0$.
	
	Given a quasi-isomorphism $f:X\to Y$ of bounded complexes of finitely generated projective $A$-modules we can factor it as
	$X\stackrel{i}\rightarrow Z\stackrel{p}{\rightarrow}Y$ where $Z$ is the  mapping cylinder of $f$. Since $X$ and $Y$ are chain deformation retracts of $Z$, it follows that $\rho(X)=\rho(Z)=\rho(Y)$. Therefore, $\rho$ is well-defined on $\Perf(A)$.
	
	It is clear that the function $\rho$ on $\Perf(A)$ given by (\ref{eq:extendrank}) satisfies the axioms (\ref{Op1}) and (\ref{O2}) of the rank function. To prove (\ref{Op3}), consider an exact triangle $X \to Y \to Z \rightsquigarrow$ in $\Perf(A)$. Without loss of generality, we can assume that the map $X\to Y$ is a cofibration of complexes of $A$-modules, in other words, it is a degree-wise split injection, and $Z\cong X/Y$ is a bounded complex of finitely generated projective $A$-modules. By the property (\ref{o2}) of a Sylvester rank function, we have for all $n\in \Z$:
	\begin{equation}\label{eq:shortexact}
		\rho(Y_n)=\rho(X_n)+\rho(Z_n).
	\end{equation}
	Denote by $d_n^X, d_n^Y,d_n^Z, n\in \Z$ the differentials in the complexes $X,Y$ and $Z$ respectively. Then by (m3) we have for all $n\in\Z$:
	\begin{equation}\label{eq:shortexact'}
		\rho(d_n^Y)\geq \rho(d_n^X)+\rho(d_n^Z).
	\end{equation}
	From  (\ref{eq:shortexact})  we deduce:
	\begin{equation}\label{eq:triangle}
		\rho(X)-\rho(Y)+\rho(Z)=\sum_{n\in\Z}(\rho(d_n^Y)+\rho(d_{n+1}^Y)-\rho(d_n^X)-\rho(d_{n+1}^X)-\rho(d_n^Z)-\rho(d_{n+1}^Z))q^n.
	\end{equation}
	Using (\ref{eq:shortexact'}) we conclude that the polynomial $\rho(X)-\rho(Y)+\rho(Z)$ satisfies the condition of Lemma \ref{lem:obvious}. It follows that $\rho(X)-\rho(Y)+\rho(Z)=(q+1)\phi$ with $\phi\in{\mathbb R}_{\geq 0}(d)$ as required. 
	
	Given a morphism $f:P\to Q$ in $\proj(A)$, (\ref{eq:extendrank}) gives
	$\rho(\cone(f)) = (\rho(P)-\rho(f))q + (\rho(Q)-\rho(f))$, consistent with (\ref{eq:objectmor}). It follows that the 
	$\infty$-periodic rank function on $\Perf(A)$ defined by (\ref{eq:extendrank})
	does in fact extend the original Sylvester rank function $\rho$ on $\proj(A)$.
	
	We will now prove that, conversely, any $\infty$-periodic rank function $\rho$ on $\Perf(A)$ is necessarily given by the formula (\ref{eq:extendrank}).	Given an interval $I$ of integers, denote by $X_I\in\Perf(A)$ the brutal truncation of $X$ concentrated in degrees $n\in I$, i.e. $(X_I)_n=X_n$ for $n\in I$, and $(X_I)_n=0$ otherwise, with differential $d_n$ equal to that of $X$ for all $n$ such that $\{n,n-1\}\in I$.
	Since the claimed formula for $\rho(X)$ is consistent with condition (\ref{Op1}), it suffices to prove that the constant term of $\rho(X)$ is equal to
	$\rho(X_0)-\rho(d_0)-\rho(d_1).$

	From the standard exact triangle 
	$$ \Sigma^{-1}X_{[2,\infty)}\stackrel{\phi}{\to}X_{(-\infty,1]} \to X \to X_{[2,\infty)}\rightsquigarrow$$ 
	we obtain by (\ref{Op3}):
	\begin{equation}\label{eq:constterms}
		\rho(X_{(-\infty,1]})-\rho(X)+\rho(X_{[2,\infty)})=(q+1)\rho(\phi).
	\end{equation}
	Next note that by Lemma \ref{lemma:perfrank2}, the constant terms of $\rho(X_{[2,\infty)})$ and of $\rho(\Sigma^{-1}X_{[2,\infty)})$ are zero, and then the constant term of $\rho(\phi)$ is likewise zero since $\rho(\phi)\leq \rho(\Sigma^{-1}X_{[2,\infty)})$ by Lemma~\ref{lem:boundmorphismrank}. It follows from equation (\ref{eq:constterms}) that the constant terms of $\rho(X)$ and $\rho(X_{(-\infty,1]})$ coincide. 
	
	Arguing similarly with the exact triangle 
	$$ \to X_{(-\infty,1]} \to X_{[-1,1]}\to \Sigma X_{(-\infty,-2]}\rightsquigarrow,$$
	we find that the constant terms of $\rho(X_{[-1,1]})$ and $\rho(X_{(-\infty,1]})$ coincide and thus, also coincide with the constant term of $\rho(X)$.
	
	Write $\rho(X_{[-1,1]}) = aq^{-1} + b + cq$,  $\rho(X_{[-1,0]}) = a'q^{-1} + b'$, and
	$\rho(X_{[0,1]}) = b'' + c''q$.  From the two exact triangles
	$$ X_{[-1,0]} \to X_{[-1,1]}\to \Sigma X_1\rightsquigarrow \qquad \text{and} \qquad \Sigma^{-1}X_{-1} \to X_{[-1,1]} \to X_{[0,1]}\rightsquigarrow,$$
	and the triangle inequality (\ref{Op3})	we find that $a=a'$, $c=c''$ and $\rho(X_{[-1,1]})_{q=-1}-\rho(\Sigma X_{-1})_{q=-1}+\rho(X_{[0,1]})_{q=-1}=0$. Furthermore, from the exact triangle 
	\[
	X_0\to X_{[0,1]}\to\Sigma X_1\rightsquigarrow
	\]
	we obtain $\rho(X_{[0,1]})_{q=0}=\rho(X_0)-\rho(d_1)$ and similarly $q\rho(X_{[-1,0]})_{q=0}=\rho(X_{-1})-\rho(d_0)$. We then calculate
	\begin{align*}\label{constantterm}
		b&=\rho(X_{[-1,1]})_{q=-1}+a+c \\
		&= \rho(\Sigma X_{-1})_{q=-1} + \rho(X_{[0,1]})_{q=-1}+a'+c''\\
		&=-\rho(X_{-1})+b''+a'\\
		&=\rho(X_{[0,1]})_{q=0} + (q\rho(X_{[-1,0]}))_{q=0}-\rho(X_{-1})\\
		&=\rho(X_0)-\rho(d_1)+\rho(X_{-1})-\rho(d_0)-\rho(X_{-1})\\
		&=\rho(X_0)-\rho(d_1)-\rho(d_0)
	\end{align*}	
	as desired.
\end{proof}	
\begin{rem}\
	\begin{itemize}
		\item
		Let $A$ be an ordinary ring. Let $d<\infty$. Recall from Subsection \ref{periodic} that $\pi_{\infty,d}:\mathbb{Z}(\infty)\to\mathbb{Z}(d)$ is the natural reduction map. It allows one to assign to any normalized $\infty$-periodic rank function $\rho$ on $\Perf(A)$   the normalized $d$-periodic rank function $\pi_{\infty,d}\circ \rho$ on $\Perf(A)$. This is a injective map with a canonical splitting: given a normalized $d$-periodic rank function $\rho$, restrict it to a Sylvester rank function on $A$, and then take the unique extension to a normalized ($\infty$-periodic) rank function $\hat\rho$ on $\Perf(A)$ provided by Proposition \ref{prop:extendrank}. 
		$$
		\xymatrix{
			\NRk_\infty(\Perf(A)) \ar[rr] \ar[rd]^{\sim}& & \NRk_d(\Perf(A)) \ar[ld] \\
			& \operatorname{Syl}(A)}$$\item
		Suppose $\beta: A\to K$ is a map in the {\em homotopy category} of dg rings from a ring $A$ to a $d$-periodic skew-field $K$. Let $\rho=\rho_\beta$ be the normalized $d$-periodic rank function on $\Perf(A)$ obtained  via pullback along $\beta$  of the unique normalized $d$-periodic rank function on the perfect derived category of  $K$. Then $\hat\rho$ may be described as follows. The homomorphism $H(\beta):A\to K$ obtained by passing to homology algebras factors as the composition of a homomorphism $\beta_0:A\to K_0$ and the inclusion $K_0\hookrightarrow K$, and we have
		$\hat\rho=\rho_{\beta_0}$, the pullback along $\beta_0$ of the unique normalized ($\infty$-periodic) rank function on $\Perf(K_0)$. 
		Moreover $\rho=\pi_{\infty,d}\circ \hat\rho$ if and only if (the homotopy class of maps) $\beta$ is realized as an actual ring homomorphism $A\to K$ (and then, it must necessarily be $H(\beta)$).
	\end{itemize}
\end{rem}
\section{Further properties of rank functions}\label{section:further}
As before, we assume that $\C$ is a triangulated category with a $d$-periodic rank function $\rho$. We will examine the behavior of $\rho$ with respect to functors in or out of $\C$. Given another triangulated category $\C'$ and an exact functor $F:\C'\to\C$, the pullback
$\rho'=F^*(\rho)$, assigning to each morphism $f$ of $\C'$ the rank $\rho'(f)=\rho(F(f))$, is clearly also a $d$-periodic  rank function on $\C'$. The pullback of an integral rank function is integral.

On the other hand, a pushforward of $\rho$ is not always possible; later on we will consider the case of a pushforward along a Verdier quotient. 

Lemma \ref{lem:boundmorphismrank} motivates the following definitions.
\begin{defi}\label{def:full}	
	Let $\rho$ be a $d$-periodic rank function on $\C$. We say that a morphism $f:X\to Y$ in $\C$ is
	\begin{itemize}
		\item {\em left $\rho$-full} if $\rho(f)=\rho(X)$;		
		\item {\em right $\rho$-full} if $\rho(f)=\rho(Y)$; \item {\em $\rho$-full} if it is left full and right full.
	\end{itemize}
\end{defi}

\begin{lem}\label{lem:full}
	Let $\rho$ be a $d$-periodic rank function on $\C$.
	\begin{enumerate}
		\item Let
		$$X\xrightarrow{f} Y\xrightarrow{g} Z \rightsquigarrow$$
		be an exact triangle in $\C$, with a connecting homomorphism $h:\Sigma^{-1}Z\to X$. Then
		\begin{itemize}
			\item $f$ if left $\rho$-full if and only if $g$ is right $\rho$-full if and only if $\rho(h)=0$;
			\item $f$ is $\rho$-full if and only if $\rho(Z)=0$.
		\end{itemize}
		\item Let $f\colon X\to Y$ and $g\colon Y\to Z$ be morphisms in $\C$. If $f$ is $\rho$-full, then $\rho(gf)=\rho(g)$. 
		If $g$ is $\rho$-full, then $\rho(gf)=\rho(f)$.
	\end{enumerate}
\end{lem}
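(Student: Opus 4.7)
The plan for Part (1) is to extract three rank identities by applying (\ref{M3}) to each of the three rotations of the exact triangle $X\xrightarrow{f} Y\xrightarrow{g} Z\rightsquigarrow$: namely $\rho(h)+\rho(f)=\rho(X)$, $\rho(f)+\rho(g)=\rho(Y)$, and $\rho(g)+q\rho(h)=\rho(Z)$, where the factor $q$ in the last equation comes from (\ref{Mp1}) applied to the suspended connecting morphism $\Sigma h$. Since $q$ is a unit in $\BR(d)$, the equation $q\rho(h)=0$ is equivalent to $\rho(h)=0$, and the triple equivalence ``$f$ left $\rho$-full $\Leftrightarrow$ $\rho(h)=0$ $\Leftrightarrow$ $g$ right $\rho$-full'' follows immediately from the first and third identities. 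For the second assertion, if $f$ is $\rho$-full the three identities successively force $\rho(h)=\rho(g)=0$ and hence $\rho(Z)=0$; conversely, $\rho(Z)=0$ combined with Lemma~\ref{lem:boundmorphismrank} gives $\rho(g)\leq\rho(Z)=0$ and $\rho(h)\leq\rho(\Sigma^{-1}Z)=q^{-1}\rho(Z)=0$, from which $\rho(f)$ equals both $\rho(X)$ and $\rho(Y)$.

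For Part (2), the strategy is to combine the octahedral axiom with Part (1). Starting from $f\colon X\to Y$ and $g\colon Y\to Z$, the octahedral axiom produces an exact triangle $\cone(f)\to\cone(gf)\to\cone(g)\rightsquigarrow$. When $f$ is $\rho$-full, Part (1) delivers $\rho(\cone(f))=0$, and applying (\ref{M3}) to this new triangle yields $\rho(\cone(gf))=\rho(\cone(g))$. Now one applies the triangle inequality (\ref{Op3}), or equivalently the rank-nullity relations obtained from (\ref{M3}), to the two triangles $X\xrightarrow{gf} Z\to\cone(gf)\rightsquigarrow$ and $Y\xrightarrow{g} Z\to\cone(g)\rightsquigarrow$, producing expressions of the form $(q+1)\rho(h')=\rho(X)-\rho(Z)+\rho(\cone(gf))$ and $(q+1)\rho(h'')=\rho(Y)-\rho(Z)+\rho(\cone(g))$ for the respective connecting morphisms. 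Subtracting and using $\rho(X)=\rho(Y)$ together with $\rho(\cone(gf))=\rho(\cone(g))$ gives the identity $(q+1)\bigl(\rho(g)-\rho(gf)\bigr)=0$.

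The main obstacle is then to conclude from this identity that $\rho(g)=\rho(gf)$, because $q+1$ is in general a zero-divisor in $\BR(d)$ (already for $d=2$), so direct cancellation is unavailable. We resolve this by invoking the ideal condition (\ref{M5}), which guarantees $\rho(g)-\rho(gf)\in\BR_{\geq 0}(d)$, and then applying the following elementary positivity observation: if $x=\sum a_iq^i\in\BR_{\geq 0}(d)$ satisfies $(q+1)x=0$, then the coefficients $a_i+a_{i-1}$ of $(q+1)x$ (with indices read modulo $d$ when $d<\infty$) all vanish, and $a_i\geq 0$ forces every $a_i=0$, so $x=0$. Hence $\rho(gf)=\rho(g)$. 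The case in which $g$ is $\rho$-full is strictly dual: one uses $\rho(\cone(g))=0$ to derive both $\rho(Y)=\rho(Z)$ and $\rho(\cone(gf))=\rho(\cone(f))$, and runs the same subtract-then-apply-positivity routine to conclude $\rho(gf)=\rho(f)$.
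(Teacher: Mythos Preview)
Your proof is correct and follows essentially the same route as the paper: Part~(1) via the three rank--nullity identities from (\ref{M3}) on rotations of the triangle, and Part~(2) via the octahedral triangle $\cone(f)\to\cone(gf)\to\cone(g)\rightsquigarrow$, reducing to $(q+1)(\rho(g)-\rho(gf))=0$ and concluding by (\ref{M5}) and nonnegativity. The only cosmetic difference is that the paper subtracts the identities $(q+1)\rho(gf)=q\rho(X)+\rho(Z)-\rho(\cone(gf))$ and $(q+1)\rho(g)=q\rho(Y)+\rho(Z)-\rho(\cone(g))$ directly, whereas you pass through the connecting morphisms $h',h''$; note that your subtraction literally yields $(q+1)(\rho(h')-\rho(h''))=0$, and one more application of (\ref{M3}) (namely $\rho(h')=\rho(X)-\rho(gf)$ and $\rho(h'')=\rho(Y)-\rho(g)$) is needed to convert this into $(q+1)(\rho(g)-\rho(gf))=0$.
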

\begin{proof}
	The first part follows directly from Axiom (\ref{M3}). For the second part, consider the exact triangle
	$$\cone(f)\to\cone(gf)\to\cone(g)\rightsquigarrow,$$ given by the octahedral axiom. Suppose that $f$ is $\rho$-full. Then
	$\rho(\cone(f))=0$ and therefore $\rho(\cone(gf))=\rho(\cone(g))$. We deduce that
	$$	(q+1)(\rho(gf)-\rho(g)) = (\rho(X)-\rho(\cone(gf))+q\rho(Z))- (\rho(Y)-\rho(\cone(g))+q\rho(Z)) = 0.$$
	Since, by (\ref{M5}), $\rho(gf)-\rho(f)\geq 0$, we deduce that $\rho(gf)-\rho(g)=0$, as desired. The argument when $g$ is $\rho$-full is similar.
\end{proof}	
\begin{cor}\label{cor:scalar}
	Let $\alpha$ be an invertible element in $\ground$ and $f:X\to Y$ is a morphism in $\C$. Then $\rho(\alpha\cdot f)=\rho(f)$.
\end{cor}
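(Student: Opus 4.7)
The plan is to factor the scalar multiple $\alpha\cdot f$ through an isomorphism on the source (or target) and invoke the multiplicativity of rank along $\rho$-full morphisms established in Lemma~\ref{lem:full}(2). Concretely, I would write
\[\alpha\cdot f \;=\; f\circ(\alpha\cdot\id_X),\]
which holds since $\C$ is $\ground$-linear.

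First I would observe that $\alpha\cdot\id_X\colon X\to X$ is an isomorphism, with two-sided inverse $\alpha^{-1}\cdot\id_X$, because $\alpha$ is a unit in $\ground$. Next I would show that any isomorphism $g\colon X\to X'$ is $\rho$-full. For this, complete $g$ to an exact triangle $X\xrightarrow{g}X'\to C\rightsquigarrow$; since $g$ is an isomorphism, $C\cong 0$, so $\rho(C)=0$ by (\ref{O2}) applied to $0\oplus 0$. Then Lemma~\ref{lem:full}(1) gives that $g$ is $\rho$-full.

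Applying this to $\alpha\cdot\id_X$, I would conclude using Lemma~\ref{lem:full}(2) that
\[\rho(\alpha\cdot f)\;=\;\rho\bigl(f\circ(\alpha\cdot\id_X)\bigr)\;=\;\rho(f),\]
which is exactly the claim. There is no real obstacle here: the only thing to be careful about is that we are in a $\ground$-linear triangulated category so that $\alpha\cdot\id_X$ makes sense and is a genuine isomorphism; once that is granted, the result is an immediate application of Lemma~\ref{lem:full}.
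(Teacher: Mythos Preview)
Your proof is correct and follows essentially the same approach as the paper: factor $\alpha\cdot f$ through the automorphism $\alpha\cdot\id_X$, observe that this automorphism is $\rho$-full, and apply Lemma~\ref{lem:full}(2). The paper's proof is just a terser version of yours.
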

\begin{proof}
	The multiplication by $\alpha$ is an automorphism of $X$ and so, it is a $\rho$-full map. Then the conclusion follows from Lemma \ref{lem:full} (2). 
\end{proof}	
\begin{cor}\label{cor:quotient}
	The full subcategory on
	$$\Ker(\rho):=\left\{X\in\C\colon \rho(X)=0\right\}$$ 
	is a thick subcategory of $\C$. Moreover, $\rho$ descends to a $d$-periodic rank function $\overline{\rho}$ 
	on the Verdier quotient $\C/{\mathcal U}$ by any thick subcategory ${\mathcal U}\subseteq\ker(\rho)$, 
	and the pullback of $\overline{\rho}$ to $\C$ is $\rho$. 
The obtained $d$-periodic rank function on $\C/\ker(\rho)$ is object-faithful.
\end{cor}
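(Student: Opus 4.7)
The plan is to verify each of the three claims in turn. For the thickness of $\Ker(\rho)$, suspension-closure is immediate from (\ref{Op1}). Closure under extensions: given an exact triangle $X \to Y \to Z \rightsquigarrow$ with $\rho(X) = \rho(Z) = 0$, axiom (\ref{Op3}) gives $-\rho(Y) = (q+1)\phi$ for some $\phi \in \BR_{\geq 0}(d)$. Since $\rho(Y) \in \BR_{\geq 0}(d)$ as well, and $\BR_{\geq 0}(d) \cap (-\BR_{\geq 0}(d)) = \{0\}$ (as $\BR_{\geq 0}(d)$ is a pointed cone, visible on canonical-basis coefficients), we conclude $\rho(Y) = 0$. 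Closure under retracts: if $X \xrightarrow{i} Y \xrightarrow{p} X$ with $p \circ i = \id_X$ and $Y \in \Ker(\rho)$, then the ideal condition (\ref{M5}) together with Lemma \ref{lem:boundmorphismrank} gives $\rho(X) = \rho(p \circ i) \leq \rho(i) \leq \rho(Y) = 0$.

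For the descent, given a thick subcategory $\mathcal{U} \subseteq \Ker(\rho)$, I would define $\bar\rho$ on a morphism of $\C/\mathcal{U}$ represented by a roof $X \xleftarrow{s} Z \xrightarrow{f} Y$ (with $\cone(s) \in \mathcal{U}$) by the formula $\bar\rho := \rho(f)$. Well-definedness is the key technical point: if two such roofs $(s,f)$ and $(s', f')$ represent the same morphism, they admit a common refinement $(s'', f'')$ together with maps $a: Z'' \to Z$ and $b: Z'' \to Z'$ satisfying $s \circ a = s' \circ b = s''$ and $f \circ a = f' \circ b = f''$. Applying the octahedral axiom to the composite $s \circ a = s''$ produces an exact triangle $\cone(a) \to \cone(s'') \to \cone(s) \rightsquigarrow$ whose outer terms lie in $\mathcal{U}$, so thickness forces $\cone(a) \in \mathcal{U} \subseteq \Ker(\rho)$. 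By Lemma \ref{lem:full}(1), $a$ is $\rho$-full, so Lemma \ref{lem:full}(2) yields $\rho(f \circ a) = \rho(f)$, i.e.\ $\rho(f'') = \rho(f)$; symmetrically $\rho(f'') = \rho(f')$, and the two candidate values agree.

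Translation invariance (\ref{Mp1}) and additivity (\ref{M2}) for $\bar\rho$ are immediate from the roof description, and the pullback identity $j^*\bar\rho = \rho$ holds because $j(f)$ is represented by the roof $(\id, f)$. The remaining axioms (\ref{M3}), (\ref{M4}), (\ref{M5}) I would check by invoking the standard calculus of fractions for Verdier quotients: every exact triangle in $\C/\mathcal{U}$ is isomorphic to the image of one in $\C$, transferring (\ref{M3}); morphism composites and upper-triangular matrix morphisms admit compatible roof representatives from which (\ref{M5}) and (\ref{M4}) descend from the corresponding axioms for $\rho$ (isomorphisms in $\C/\mathcal{U}$ preserve $\bar\rho$ by well-definedness combined with Lemma \ref{lem:full}). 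Object-faithfulness on $\C/\Ker(\rho)$ is then immediate: if $\bar\rho(X) = \rho(X) = 0$ then $X \in \Ker(\rho)$, hence $X \cong 0$ in the quotient. I expect the main obstacle to be axiom (\ref{M4}): producing a single roof representing an upper-triangular matrix morphism from three separately-chosen roofs requires iteratively applying the Ore condition to obtain common denominators, after which the axiom reduces cleanly to (\ref{M4}) for $\rho$.
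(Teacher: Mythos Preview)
Your proposal is correct and follows essentially the same approach as the paper's proof: define $\bar\rho$ on a roof by the rank of the ``numerator'' map, invoke Lemma~\ref{lem:full}(2) for well-definedness, and then observe that the relevant triangles and diagrams in $\C/\mathcal{U}$ lift to $\C$ to transfer axioms (\ref{M3})--(\ref{M5}). Your treatment is simply more explicit on thickness of $\Ker(\rho)$ and on the common-refinement step, both of which the paper leaves implicit.
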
 
\begin{proof}The category $\C/{\mathcal U}$ is the localization of $\C$ at a set of $\rho$-full morphisms, and so
	any morphism $f:M\to N$ in $C/{\mathcal U}$ can be written as a roof $\xymatrix{M\ar^h[r]&L&N\ar_{g}[l]}$ where $h$ and $g$ are morphisms in $C$ and $\cone(g)$ is an object of $\mathcal U$ (so, in particular, $g$ is $\rho$-full). We set $\overline{\rho}(f):=\rho(h)$; then part (2) of Lemma~\ref{lem:full} ensures that $\overline{\rho}$ is a well-defined function  on morphisms of $\C/{\mathcal U}$. Clearly, $\overline\rho$  
	pulls back to $\rho$, and satisfies (\ref{Mp1}) and (\ref{M2}). That 
	$\overline\rho$ obeys (\ref{M3})-(\ref{M5}) follows from the fact that every triangle or diagram of the form $X\to Y \to Z$ or $X\to Y \leftarrow Z \to W$ in $\C/{\mathcal U}$ is isomorphic to the image of a triangle or diagram of the same form in $\C$.
	
The object-faithfulness of $\overline{\rho}$ on $\C/\ker(\rho)$ is clear.
\end{proof}
\begin{rem}
	Let $d$ and $d^\prime$ be positive integers with $d$ divisible by $d^\prime$; clearly the only element in $\BR(d)$ mapping to zero under the reduction map $\BR_{\geq 0}(d)\to \BR_{\geq 0}(d^\prime)$ is zero. Thus, given a $d$-periodic rank function $\rho$ and the corresponding $d^\prime$-periodic rank function $\rho^\prime$, their kernels coincide. In particular, if one is interested in the thick subcategories that are kernels of rank functions, it results in no loss of generality to consider only ordinary (i.e. 1-periodic) rank functions.
\end{rem}
\begin{lem}\label{lem:sum}Let $f$ and $g$ be morphisms in $\C$ with the same domain and codomain. Then $$\rho(f+g)\leq \rho(f)+ \rho(g).$$\end{lem}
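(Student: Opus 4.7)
The plan is to express $f+g$ as a composition built from the direct sum $f\oplus g$ together with the diagonal and codiagonal morphisms, and then invoke the ideal condition (\ref{M5}) together with additivity (\ref{M2}).

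Concretely, let $\Delta_X\colon X\to X\oplus X$ denote the diagonal morphism $x\mapsto (x,x)$ and let $\nabla_Y\colon Y\oplus Y\to Y$ denote the codiagonal (fold) morphism $(y_1,y_2)\mapsto y_1+y_2$. These exist in any additive category, in particular in the triangulated category $\C$. The key elementary identity is
\[
f+g \;=\; \nabla_Y \circ (f\oplus g) \circ \Delta_X,
\]
which is immediate from the definitions of $\Delta_X$ and $\nabla_Y$.

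From here the proof is a two-line application of the axioms. Applying the ideal condition (\ref{M5}) twice yields
\[
\rho(f+g) \;=\; \rho\bigl(\nabla_Y\circ (f\oplus g)\circ \Delta_X\bigr) \;\leq\; \rho(f\oplus g),
\]
and then additivity (\ref{M2}) gives $\rho(f\oplus g)=\rho(f)+\rho(g)$, as desired.

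I do not expect any substantial obstacle: the entire content of the lemma is the observation that in an additive category the sum of two parallel morphisms factors through their direct sum, and this factorization reduces the subadditivity statement to the ideal condition and additivity, both of which are axioms of a $d$-periodic rank function. No periodicity issue arises because (\ref{M2}) and (\ref{M5}) are already part of Definition~\ref{def:rankperiodic} in the form needed.
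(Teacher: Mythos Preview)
Your proof is correct and is essentially identical to the paper's: the paper writes the same factorization in matrix form as $(f+g)=\begin{pmatrix}1&1\end{pmatrix}\begin{pmatrix}f&0\\0&g\end{pmatrix}\begin{pmatrix}1\\1\end{pmatrix}$ and then applies (\ref{M5}) and (\ref{M2}).
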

\begin{proof} We have a factorization
	$$(f+g)=
	\begin{pmatrix} 1 & 1 \end{pmatrix}
	\begin{pmatrix} f & 0 \\ 0 & g \end{pmatrix}
	\begin{pmatrix} 1 \\ 1 \end{pmatrix}.$$
	The desired inequality follows from axioms (\ref{M5}) and (\ref{M2}).
\end{proof}
\begin{cor}\label{cor:ideal}
	For any pair of objects $X,Y$ of $\C$, define
	$$\Hom_\C^{\rho}(X,Y)=\left\{f\in\Hom_\C(X,Y)\mid \rho(f)=0\right\}.$$	
	This is an ideal of morphisms in $\C$, by (\ref{M5}) and by Lemma \ref{lem:sum}.
\end{cor}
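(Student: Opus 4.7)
The assertion is that the graded collection $\Hom^\rho_\C(X,Y)$ forms a two-sided ideal in $\C$ in the classical sense, i.e.\ each $\Hom^\rho_\C(X,Y)$ is an additive subgroup of $\Hom_\C(X,Y)$ and the total collection is stable under pre- and post-composition with arbitrary morphisms of $\C$. The plan is to check both properties directly from the axioms, exactly as the statement advertises.

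First I would verify the additive subgroup property. Given $f,g\in\Hom^\rho_\C(X,Y)$, Lemma~\ref{lem:sum} yields $\rho(f+g)\leq\rho(f)+\rho(g)=0$, and since $\rho$ takes values in $\R_{\geq 0}(d)$ this forces $\rho(f+g)=0$. Closure under negation is immediate from Corollary~\ref{cor:scalar} applied to the invertible element $-1\in\ground$: one has $\rho(-f)=\rho(f)=0$. Finally, the zero morphism $0_{X,Y}$ factors through a zero object $0$, whose identity satisfies $\rho(\id_0)=2\rho(\id_0)$ by (\ref{M2}) and hence $\rho(\id_0)=0$; then (\ref{M5}) gives $\rho(0_{X,Y})\leq\rho(\id_0)=0$.

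Next I would establish the two-sided ideal property. If $f\in\Hom^\rho_\C(X,Y)$ and $g\colon Y\to Z$ is any morphism of $\C$, then by the ideal condition (\ref{M5}) we have $\rho(gf)\leq\rho(f)=0$, so $gf\in\Hom^\rho_\C(X,Z)$; symmetrically, for any $h\colon W\to X$ we get $\rho(fh)\leq\rho(f)=0$, whence $fh\in\Hom^\rho_\C(W,Y)$. Combined with the previous paragraph, this is precisely the definition of an ideal of morphisms in $\C$.

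There is no genuine obstacle here: both clauses are one-line consequences of the axioms, which is why the statement in the paper is presented as a corollary whose proof is essentially contained in its own statement. The only mild subtlety worth flagging is the verification that $\rho$ vanishes on the zero morphism and on additive inverses, which requires invoking (\ref{M2}) plus either Corollary~\ref{cor:scalar} or a small direct argument using (\ref{M5}).
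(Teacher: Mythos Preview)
Your proof is correct and follows exactly the approach the paper indicates: the paper gives no proof beyond the parenthetical ``by (\ref{M5}) and by Lemma~\ref{lem:sum}'' in the statement itself, and you have simply unpacked this into a complete argument. Your added verifications that the zero morphism has rank zero and that $\rho(-f)=\rho(f)$ are useful details the paper leaves implicit.
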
\noproof
\begin{lem}\label{lem:additive}
	Let $f$ and $g$ be morphisms of $\C$ with $\rho(g)=0$. Then $\rho(f+g)=\rho(f)$.
\end{lem}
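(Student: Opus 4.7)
The plan is to prove the equality $\rho(f+g)=\rho(f)$ by a standard two-sided estimate, using the subadditivity provided by Lemma \ref{lem:sum} in both directions.

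First I would apply Lemma \ref{lem:sum} directly to $f$ and $g$ to obtain
$\rho(f+g)\leq \rho(f)+\rho(g)=\rho(f)$,
using the hypothesis $\rho(g)=0$. This handles the easy inequality.

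For the reverse inequality, I would write $f=(f+g)+(-g)$ and apply Lemma \ref{lem:sum} again to get
$\rho(f)\leq \rho(f+g)+\rho(-g)$.
The point is then that $\rho(-g)=\rho(g)=0$: this follows from Corollary \ref{cor:scalar} with $\alpha=-1\in\ground$, which tells us that multiplication by an invertible scalar does not change the rank. Combining these two inequalities gives the claimed equality.

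No serious obstacle is expected here; the only subtlety is the appeal to Corollary \ref{cor:scalar} to handle the sign, which is why the result is stated separately rather than as an immediate corollary of Lemma \ref{lem:sum}. Note that one could alternatively observe that $\Hom_\C^{\rho}(X,Y)$ is a subgroup of $\Hom_\C(X,Y)$ (as follows from Corollary \ref{cor:ideal}, since ideals are in particular closed under negation), so that adding a rank-zero morphism defines an equivalence relation on $\Hom_\C(X,Y)$ on whose equivalence classes $\rho$ is constant.
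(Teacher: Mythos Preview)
Your proof is correct and essentially identical to the paper's own argument: both apply Lemma~\ref{lem:sum} twice (once to $f+g$ and once to $(f+g)+(-g)$) and invoke Corollary~\ref{cor:scalar} with $\alpha=-1$ to handle $\rho(-g)$. The alternative observation via Corollary~\ref{cor:ideal} is a nice remark but not needed.
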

\begin{proof}
	By Lemma \ref{lem:sum} we have $\rho(f+g)\leq\rho(f)+\rho(g)=\rho(f)$. Taking into account that $\rho(-g)=\rho(g)$ by Corollary \ref{cor:scalar} we have $\rho(f)=\rho(f+g-g)\leq \rho(f+g)+\rho(g)=\rho(f+g)$ which implies the desired equality.
\end{proof}	
\begin{lem}\label{lem:local}
	Let $\rho$ be a prime  rank function on a triangulated category $C$ supplied with a generator $X$ and denote by $L_\rho(X)$ the image of $X$ in the Verdier quotient $\C/\ker{\rho}$. Then the graded algebra $\End_\bullet(L_\rho(X))$ is local.
\end{lem}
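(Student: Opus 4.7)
The plan is to exhibit $\End_\bullet(L_\rho(X))$ as graded local by means of the rank function, splitting homogeneous endomorphisms cleanly into isomorphisms and rank-zero morphisms, and then invoking Corollary \ref{cor:ideal} to see that the latter form a graded two-sided ideal. The first step is to descend to the Verdier quotient $\C/\ker(\rho)$: by Corollary \ref{cor:quotient}, $\rho$ descends to an \emph{object-faithful} rank function $\bar\rho$ on $\C/\ker(\rho)$, with $\bar\rho(L_\rho(X))=\rho(X)=1$. Replacing $\bar\rho$ by its $1$-periodic reduction $\pi_{d,1}\circ\bar\rho$ if necessary -- which preserves integrality, the kernel, and the value on the generator -- I may assume throughout that $\bar\rho$ is $1$-periodic.

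Next I would bound the rank of any homogeneous endomorphism. For $f\in\End_n(L_\rho(X)):=\Hom_{\C/\ker(\rho)}(L_\rho(X),\Sigma^n L_\rho(X))$, translation invariance (\ref{O1}) gives $\bar\rho(\Sigma^n L_\rho(X))=1$, so Lemma \ref{lem:boundmorphismrank} forces $\bar\rho(f)\le 1$; integrality then restricts $\bar\rho(f)\in\{0,1\}$.

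The crux is the dichotomy: $f$ is invertible in $\End_\bullet(L_\rho(X))$ if and only if $\bar\rho(f)=1$. Indeed, if $\bar\rho(f)=1$ then $f$ is both left and right $\bar\rho$-full in the sense of Definition \ref{def:full}, so Lemma \ref{lem:full}(1) yields $\bar\rho(\cone(f))=0$, and object-faithfulness of $\bar\rho$ gives $\cone(f)=0$, making $f$ an isomorphism. Conversely, if $f$ is invertible then applying the ideal condition (\ref{M5}) to $\id_{L_\rho(X)}=f^{-1}\circ f$ gives $1=\bar\rho(\id)\le\bar\rho(f)\le 1$.

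The set of homogeneous non-invertible elements therefore coincides with $\bigoplus_n\{f\in\End_n(L_\rho(X)):\bar\rho(f)=0\}$, which is a graded two-sided ideal by Corollary \ref{cor:ideal}; this is the unique maximal graded ideal and the quotient is a graded skew-field, exhibiting $\End_\bullet(L_\rho(X))$ as graded local. No serious obstacle arises; the one delicate point, namely handling the periodicity of $\rho$, is cleanly sidestepped by the initial reduction to the $1$-periodic case.
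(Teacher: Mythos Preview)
Your proof is correct and follows essentially the same route as the paper's: descend to the object-faithful quotient, use integrality and $\bar\rho(L_\rho(X))=1$ to force $\bar\rho(f)\in\{0,1\}$ for homogeneous endomorphisms, show via Lemma~\ref{lem:full} and object-faithfulness that $\bar\rho(f)=1$ precisely when $f$ is invertible, and conclude that the rank-zero morphisms form the unique maximal graded ideal. Your citation of Corollary~\ref{cor:ideal} for the ideal property is in fact slightly more careful than the paper's (which appeals only to~(\ref{M5}), tacitly using Lemma~\ref{lem:sum} for closure under addition), and your explicit reduction to the $1$-periodic case is unnecessary here since a prime rank function in the statement is already $1$-periodic by the paper's conventions, but harmless.
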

\begin{proof}
	Recall that the induced rank function $\overline{\rho}$ on $\C/\ker{\rho}$  object-faithful.  The set $I:=f\in\End(L_{\rho}(X)):\overline{\rho}(f)=0$ is an ideal in $\End_\bullet(L_{\rho}(X))$ by (\ref{M5}). If $f\in\End_\bullet(L_\rho(X))$ is not in $I$ then $\overline{\rho}(f)=1$, in other words $f:L_\rho(X)\to L_\rho(X)$ is a $\overline{\rho}$-full map.  By Lemma \ref{lem:full} the cone of $f$ has zero rank and so is zero in $\C/\ker{\rho}$; thus $f$ is invertible. Since any element in $\End_\bullet(L_\rho(X))$ not in $I$ is invertible, it follows that $I$ is a maximal ideal and $\End_\bullet(L_\rho(X))$ is local.
\end{proof}

So any prime rank function on a triangulated category with a generator $X$ gives rise to a map $\End_\bullet(X)\to F_{\rho}$, where $F_{\rho}$ is the (graded, skew) residue field of     
$\End_\bullet(L_\rho(X))$.

Recall that the \emph{idempotent completion} of an additive category $\C$ is a category $\tilde{\C}$ whose objects
are pairs $(X,e)$ where $X$ is an object of $\C$ and $e$ is an idempotent endomorphism of $X$; a morphism between two such pairs $(X,e)$ and $(Y,t)$ is a morphism $f:X\to Y$ in $\C$ such that $ef=f=ft$. The idempotent completion of a triangulated category is known to be triangulated and it possesses a universal property with respect to exact functors into idempotent complete triangulated categories \cite{BaS01}.

\begin{lem}\label{lem:idempotentcompletion} Any $d$-periodic rank function $\rho\colon  \mathcal{C} \to \R(d)$ on a triangulated category $\mathcal{C}$ extends uniquely
	to a $d$-periodic rank function on its idempotent completion.
\end{lem}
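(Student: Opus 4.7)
The plan is to first establish that any extension $\tilde\rho$ is forced to satisfy $\tilde\rho(f)=\rho(\bar f)$, where $\bar f\in\Hom_\C(X,Y)$ with $t\bar f e=\bar f$ is the underlying morphism in $\C$ of $f\colon (X,e)\to(Y,t)$ in $\tilde{\C}$, and then to verify that this rule does define a $d$-periodic rank function on $\tilde{\C}$. For uniqueness, I would use the factorization $f=\pi_Y\circ(\iota_Y\circ f\circ \pi_X)\circ\iota_X$ in $\tilde{\C}$, where $\iota_X\colon (X,e)\to (X,\id_X)$ and $\pi_X\colon (X,\id_X)\to (X,e)$ are the canonical split monomorphism and epimorphism (both with underlying morphism $e$ in $\C$), and similarly for $Y$. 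The middle morphism $\iota_Y\circ f\circ \pi_X$ lies in the image of $\C\hookrightarrow\tilde{\C}$ and has underlying morphism $t\bar f e=\bar f$, so any extension of $\rho$ is forced to assign it the value $\rho(\bar f)$. Applying the ideal condition (\ref{M5}) to the factorizations $f=\pi_Y\circ(\iota_Y f\pi_X)\circ\iota_X$ and $\iota_Y f\pi_X=\iota_Y\circ f\circ\pi_X$ then pins down $\tilde\rho(f)=\rho(\bar f)$.

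For existence, I would define $\tilde\rho(f):=\rho(\bar f)$. The axioms (\ref{Mp1}), (\ref{M2}), (\ref{M4}) and (\ref{M5}) are then immediate, because translation, direct sums (given by $(X,e)\oplus(X',e')=(X\oplus X',e\oplus e')$), composition, and matrix formation in $\tilde{\C}$ all coincide, on underlying morphisms, with the corresponding operations in $\C$.

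The main obstacle is the rank-nullity axiom (\ref{M3}). For a distinguished triangle $T\colon (X,e)\xrightarrow{f}(Y,t)\xrightarrow{g}(Z,s)\rightsquigarrow$ in $\tilde{\C}$, I would reduce to the case $(Z,s)=(\cone_\C(\bar f),e'')$, where $e''$ is the idempotent on $\cone_\C(\bar f)$ induced by the self-morphism $(e,t)$ of the distinguished triangle $X\xrightarrow{\bar f}Y\xrightarrow{g_0}\cone_\C(\bar f)\rightsquigarrow$ in $\C$, so that $\bar g=g_0 t$. Applying the complementary self-morphism $(1-e,1-t,1-e'')$ produces, as a direct summand in $\tilde{\C}$, the distinguished triangle
\[(X,1-e)\xrightarrow{0}(Y,1-t)\xrightarrow{g_0(1-t)}(\cone_\C(\bar f),1-e'')\rightsquigarrow,\]
whose first arrow vanishes. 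By the standard triangulated-category fact that a distinguished triangle with zero first morphism has its second morphism a split monomorphism, one deduces $\rho(g_0(1-t))=\rho(1-t)$. The matrix factorization
\[\begin{pmatrix} g_0 t & 0\\ 0 & g_0(1-t)\end{pmatrix}=\begin{pmatrix} e'' & 0\\ 0 & 1-e''\end{pmatrix}\begin{pmatrix} g_0 & g_0\\ g_0 & g_0\end{pmatrix}\begin{pmatrix} t & 0\\ 0 & 1-t\end{pmatrix},\]
combined with Lemma~\ref{lem:sum}, yields $\rho(g_0)=\rho(g_0 t)+\rho(g_0(1-t))$. Using rank-nullity in $\C$ for $X\xrightarrow{\bar f}Y\xrightarrow{g_0}\cone_\C(\bar f)$, together with the identity $\rho(Y)=\rho(t)+\rho(1-t)$ coming from additivity applied to the splitting $Y\cong(Y,t)\oplus(Y,1-t)$ in $\tilde{\C}$, one concludes $\rho(\bar f)+\rho(g_0 t)=\rho(Y)-\rho(g_0(1-t))=\rho(t)$, which is exactly (\ref{M3}) for $T$.
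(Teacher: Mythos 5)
Your proof is correct and follows essentially the same strategy as the paper: define $\tilde\rho$ via the underlying $\C$-morphism, note that all axioms but rank-nullity are immediate, and verify rank-nullity by realizing the exact triangle in the idempotent completion as a direct summand of one in $\C$. Where you differ is in the bookkeeping: the paper identifies the $\C$-cone with $(Z,s)\oplus(\Sigma X,1-e)\oplus(Y,1-t)$ and reads off $\rho(g_0)=\rho(g_0t)+\rho(1-t)$ in one step from the factorization of $g_0$ through $g\oplus\id_{(Y,1-t)}$ followed by an isomorphism and a split inclusion, whereas you obtain the same equality by first extracting $\rho(g_0(1-t))=\rho(1-t)$ from the complementary summand triangle having zero first arrow (hence split-monic second arrow) and then proving $\rho(g_0)=\rho(g_0t)+\rho(g_0(1-t))$ via the matrix factorization together with Lemma~\ref{lem:sum}. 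This is slightly more roundabout but equally valid. Your explicit uniqueness argument, factoring $f=\pi_Y\circ(\iota_Y f\pi_X)\circ\iota_X$ and applying the ideal condition (\ref{M5}) in both directions, is a nice addition: the paper leaves uniqueness implicit, even though the statement asserts it.
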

\begin{proof}Given a map $f:(X,e)\to (Y,t)$ in $\tilde{\C}$ we define its rank as the rank of $f:X\to Y$ in $\C$. Note that under this definition the rank of the identity morphism of $(X,e)$ is $\rho(e)$. All axioms of the rank function except (\ref{M3}) are obvious. To check (\ref{M3}), consider an exact triangle in $\tilde{\C}$:
	\[
	(X,e)\stackrel{f}{\to} (Y,t)\stackrel{g}{\to} (Z,k)\rightsquigarrow
	\]
	and note that it is  a direct summand in $\tilde{\C}$ of the	following exact triangle in $\C$
	\begin{equation}\label{eq:dirsum}
		(X,1)\stackrel{f}{\to} (Y,1)\stackrel{g'}{\to} (Z',1)\rightsquigarrow
	\end{equation}
	Moreover, there is the following isomorphism in $\tilde{\C}$:
	\[
	(Z',1)\cong (Z,k)\oplus(\Sigma X,1-e)\oplus (Y,t)
	\]
	and the morphism $g':(Y,1)\to(Z',1)$ can be represented as the composite map
	\[
	\xymatrix{
		(Y,1)\ar^-{\cong}[r]& (Y,t)\oplus (Y,1-t)\ar^-{g\oplus\id_{(Y,1-t)}}[rr]&&(Z,k)\oplus(\Sigma X,1-e)\oplus (Y,1-t)
	}
	\]
	from which it follows that $\rho(g')=\rho(g)+\rho(1-t)$.  The exact triangle (\ref{eq:dirsum}) gives
	\begin{align*}
		\rho(t)+\rho(1-t) &= \rho((Y,t)\oplus(Y,1-t))\\
		&= \rho(Y)\\
		&= \rho(f)+\rho(g')\\ &=\rho(f)+\rho(g)+\rho(1-t)
	\end{align*}
	and it follows that $\rho(t)=\rho(f)+\rho(g)$ as desired.
\end{proof}
Assume now that $\C$ has a generator $X$ and $\rho$ is a prime rank function. An object of $\C$ which is a finite coproduct of shifted copies of $X$ will be called \emph{graded free}. Note that a map between graded free objects could be written as a rectangular matrix $M$ whose entries are graded endomorphisms of $X$, so we can speak of the rank $\rho(M)$ of $M$, generalizing the familiar notion in linear algebra. We will denote by $\overline{M}$ the matrix obtained from $M$ by passing to the graded residue field $F_\rho$ of the graded local algebra $\End_{L_{\rho}}(X)$ and its usual graded rank by $\operatorname{rank}_{F_{\rho}}(\overline{M})$.

\begin{prop} We have $\rho(M) = \operatorname{rank}_{F_{\rho}}(\overline{M})$.
\end{prop}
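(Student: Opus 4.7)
The plan is to pass to $\C/\ker(\rho)$, where by Corollary \ref{cor:quotient} the rank function $\overline{\rho}$ is object-faithful and (by Lemma \ref{lem:local}) the endomorphism ring $R:=\End_\bullet(L_\rho(X))$ is graded local with residue field $F_\rho$. Since $\rho$ is prime, $\rho(X)=1$, so for any homogeneous $f\in R$ of degree $|f|$ we have $\rho(f)\in\{0,q^{|f|}\}$: the zero-rank entries form precisely the maximal ideal $I$ (Corollary \ref{cor:ideal}), while an entry with $\rho(f)=q^{|f|}$ is $\rho$-full, so by Lemma \ref{lem:full}(1) its cone has rank $0$, hence vanishes by object-faithfulness, forcing $f$ to be invertible in $R$. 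Thus each homogeneous entry $M_{ji}$ of a graded-free morphism is either in $I$ (equivalently $\overline{M_{ji}}=0$ in $F_\rho$) or a unit of $R$.

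I would proceed by induction on $\min(m,n)$, where $M\colon \bigoplus_{i=1}^m \Sigma^{a_i}X\to \bigoplus_{j=1}^n \Sigma^{b_j}X$. For the base case, suppose $\overline{M}=0$, so every entry lies in $I$. Writing $M=\sum_{j,i}M^{(ji)}$ with $M^{(ji)}$ having $M_{ji}$ as its only nonzero entry, the factorization $M^{(ji)}=\iota_j\circ M_{ji}\circ\pi_i$ and the ideal condition (\ref{M5}) give $\rho(M^{(ji)})\le\rho(M_{ji})=0$; Lemma \ref{lem:sum} then yields $\rho(M)=0=\operatorname{rank}_{F_\rho}(\overline{M})$, matching the standard linear algebra fact.

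For the inductive step, assume some entry $M_{ji}$ is a unit of $R$. Permuting rows and columns (which are automorphisms of graded-free objects, hence $\rho$-full, so by Lemma \ref{lem:full}(2) preserve both $\rho(M)$ and $\operatorname{rank}_{F_\rho}(\overline{M})$), I bring this entry to position $(1,1)$. Using the unit $M_{11}$ as a pivot, the homogeneous correction terms $M_{k1}M_{11}^{-1}\colon\Sigma^{b_1}X\to\Sigma^{b_k}X$ and $M_{11}^{-1}M_{1i}\colon\Sigma^{a_i}X\to\Sigma^{a_1}X$ have exactly the right degrees to serve as entries of elementary matrices; left- and right-multiplication by these invertible elementary matrices zeroes out the remainder of the first row and column, producing the block-diagonal form $\left(\begin{smallmatrix} M_{11} & 0 \\ 0 & M'\end{smallmatrix}\right)$ without altering either rank. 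By additivity (\ref{M2}),
\begin{equation*}
\rho(M)=\rho(M_{11})+\rho(M')=q^{b_1}+\rho(M'),
\end{equation*}
and the standard graded-linear-algebra identity gives $\operatorname{rank}_{F_\rho}(\overline{M})=q^{b_1}+\operatorname{rank}_{F_\rho}(\overline{M'})$. The induction hypothesis applied to $M'$ closes the argument.

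The one point demanding attention is that the elementary operations genuinely live in the graded setting: the pivot $M_{11}$ has some degree $b_1-a_1$, and one must verify that the correction terms carry the degrees required by the ambient graded-free objects. This is precisely where passage to the local graded ring $R$ — and the fact that $F_\rho$ is a graded skew-field, so a nonzero homogeneous element lifts to a unit of $R$ of the same degree — makes the proof go through uniformly in $d$.
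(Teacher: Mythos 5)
Your proof is correct and follows essentially the same strategy as the paper: pass to the object-faithful quotient where $\End_\bullet(X)$ is graded local, then use Gaussian elimination (multiplication by $\rho$-full elementary matrices, which preserves $\rho$ by Lemma \ref{lem:full}) to reduce to the case of a matrix whose reduction over $F_\rho$ is a rank-identity block plus entries in the maximal ideal, and finish with additivity and Lemma \ref{lem:additive}. The paper packages the reduction as a single factorization $M=ER$ with $E$ invertible and $\overline R$ in normal form, whereas you organize it as an induction with one pivot at a time, but this is only a difference of presentation, not of substance.
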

\begin{proof} We may assume that $\rho$ is object-faithful, so that $\End_\bullet(X)$ is graded local, otherwise replace $\C$ with the Verdier quotient $\C/\Ker(\rho)$. By the usual row reduction process, $M=ER$, where
	$E$ is an invertible matrix and $R$ is a rectangular matrix such that $\overline{R}$ is of the form $\begin{pmatrix} I & 0 \\ 0 & 0 \end{pmatrix}$, where $I$ is an identity matrix. Here $E$ is a product of permutation matrices and upper and lower triangular matrices with invertible elements on the diagonal. So it suffices to prove that $\rho(R) = \operatorname{rank}_{F_{\rho}}(\overline{R})$. This follows from Lemma~\ref{lem:additive}. 
\end{proof}

\begin{cor}\label{cor:rhofull} Let $M$ be a matrix over $\End_\bullet(X)$ representing a morphism between graded free objects. The $M$ has a (square) $\rho$-full submatrix $N$ with $\rho(N)=\rho(M)$. 
\end{cor}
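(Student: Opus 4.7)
The plan is to reduce the problem to linear algebra over the graded residue skew-field $F_\rho$ via the preceding proposition, and then lift the result back to $M$. View $M$ as the matrix of a morphism $f\colon\oplus_i\Sigma^{n_i}X\to\oplus_j\Sigma^{m_j}X$, so that $\overline{M}$ represents a morphism $\overline{f}\colon V\to W$ of graded free $F_\rho$-modules with $V=\oplus_i\Sigma^{n_i}F_\rho$ and $W=\oplus_j\Sigma^{m_j}F_\rho$.

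The first step is to establish the graded analogue of the classical linear-algebra fact that a matrix admits a square invertible submatrix of maximal rank: I claim there exist subsets $J$ of row indices and $I$ of column indices such that the source $\oplus_{i\in I}\Sigma^{n_i}F_\rho$ and the target $\oplus_{j\in J}\Sigma^{m_j}F_\rho$ of the corresponding submatrix $\overline{N}$ are isomorphic as graded $F_\rho$-modules, and $\overline{N}$ is an isomorphism between them. This is proved by first choosing $J$ so that the projection $W\twoheadrightarrow\oplus_{j\in J}\Sigma^{m_j}F_\rho$ restricts to an isomorphism on $\im(\overline{f})$, which is possible because $\im(\overline{f})$ is a graded submodule of the graded free module $W$ over the graded skew-field $F_\rho$, and such a submodule admits a complement generated by a subset of the standard basis of $W$. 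An analogous choice applied dually (to the kernel of the resulting surjection $V\twoheadrightarrow\oplus_{j\in J}\Sigma^{m_j}F_\rho$) produces $I$, so that $\overline{N}$ becomes an isomorphism between graded free modules of the same graded dimension.

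Let $N$ be the submatrix of $M$ indexed by $I$ and $J$, so that reducing modulo the maximal ideal recovers $\overline{N}$. By the preceding proposition applied to $N$,
\[\rho(N)=\operatorname{rank}_{F_\rho}(\overline{N})=\operatorname{rank}_{F_\rho}(\overline{M})=\rho(M).\]
Since $\overline{N}$ is an isomorphism of graded $F_\rho$-modules, its source and target have equal graded dimension; applying the preceding proposition to the identity matrices of the source and target of $N$ shows that their $\rho$-ranks both coincide with $\operatorname{rank}_{F_\rho}(\overline{N})=\rho(N)$. Hence $N$ is $\rho$-full.

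The main technical point will be the graded basis-extension step producing $J$ (and then $I$); it is essentially elementary — in each graded degree one extends a linearly independent set to a basis using standard basis vectors — but requires some bookkeeping of the degree shifts $n_i,m_j$. With that in hand the conclusion follows at once from the preceding proposition.
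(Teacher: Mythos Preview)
Your argument is correct and follows essentially the same route as the paper: both reduce to the corresponding linear-algebra statement over the graded skew-field $F_\rho$ via the preceding proposition $\rho(M)=\operatorname{rank}_{F_\rho}(\overline{M})$, and then lift the resulting square full-rank submatrix back. You have simply spelled out the graded basis-extension step that the paper leaves implicit.
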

\begin{proof} This follows from the corresponding result for the graded ranks of matrices over a graded skew-field.
\end{proof}

\section{Derived localization of differential graded algebras}\label{section:localization}
Let $A$ be a dg algebra assumed, without loss of generality, to be cofibrant,  and $\tau$ be a thick subcategory of $\Perf(A)$.
\begin{defi}
	A dg $A$-module $M$ is called $\tau$-local if $\RHom_A(X,M)=0$ for any $X\in\tau$. 
\end{defi}
\begin{lem}\label{lem_tensor}
	Let $B$ be a dg algebra supplied with a map $A\to B$. Then $B$ is $\tau$-local if and only if $B\otimes^L_AX\simeq 0$ for any $X\in\tau$. 
\end{lem}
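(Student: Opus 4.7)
The plan is to use the derived tensor–hom adjunction together with a compactness argument. Since $A\to B$ makes $B$ into a $(B,A)$-bimodule, the standard adjunction gives a natural quasi-isomorphism
\begin{equation*}
\RHom_B(X\otimes^L_A B,\,B)\;\simeq\;\RHom_A(X,\,\RHom_B(B,B))\;\simeq\;\RHom_A(X,B)
\end{equation*}
for every $X\in\Perf(A)$, and this will be the basic tool. (I am writing the tensor product with the convention the paper uses for right modules; in either order the adjunction is formal.)

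First I would handle the easy direction. If $B\otimes^L_A X\simeq 0$ for every $X\in\tau$, then $\RHom_B(B\otimes^L_A X,B)\simeq 0$, and the displayed adjunction immediately gives $\RHom_A(X,B)\simeq 0$, so $B$ is $\tau$-local.

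For the converse, assume $\RHom_A(X,B)\simeq 0$ for all $X\in\tau$. The adjunction now tells us that $\RHom_B(B\otimes^L_A X,\,B)\simeq 0$. The key observation is that the object $M:=B\otimes^L_A X$ is perfect as a (right) $B$-module, since extension of scalars sends $\Perf(A)$ to $\Perf(B)$. Because $M$ is compact in $\D(B)$, the functor $\RHom_B(M,-)$ commutes with arbitrary coproducts, so the class of objects $N\in\D(B)$ with $\RHom_B(M,N)\simeq 0$ is a localizing subcategory of $\D(B)$ containing $B$; hence it equals $\D(B)$. Applying this to $N=M$ yields $\RHom_B(M,M)\simeq 0$, so $\id_M=0$ in $\D(B)$ and therefore $M\simeq 0$. (Equivalently, one may use that $(-)^{\vee}=\RHom_B(-,B)$ is a duality on $\Perf(B)$ with $M^{\vee\vee}\simeq M$, so $M^{\vee}\simeq 0$ forces $M\simeq 0$.)

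There is no serious obstacle; the only thing to be careful about is the bookkeeping of left/right module structures when invoking the adjunction, so that $B\otimes^L_A X$ is genuinely a perfect $B$-module and not merely a perfect $A$-module. Once that is set up, both implications follow formally from the adjunction together with the fact that perfect modules are reflexive (equivalently, compactness of $M$ forces $\RHom_B(M,-)\equiv 0 \Rightarrow M\simeq 0$).
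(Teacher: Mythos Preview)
Your proof is correct and follows essentially the same route as the paper: both use the tensor--hom adjunction $\RHom_B(X\otimes^L_A B,\,B)\simeq\RHom_A(X,B)$, both observe that $X\otimes^L_A B$ is perfect over $B$, and for the nontrivial direction the paper invokes exactly the reflexivity (double $B$-dual) argument you mention parenthetically at the end. Your alternative compactness/localizing-subcategory argument for that step is also fine and amounts to the same thing.
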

\begin{proof}
Note that  $\RHom_A(X,B)\simeq \RHom_B(X \otimes^L_A B,B)$, thus $\Hom_B(X\otimes^L_A B,B)\simeq 0$ implies $\RHom_A(X,B)\simeq 0$.   Conversely, since $X$ is a perfect dg $A$-module, $X\otimes^L_A B$ is a perfect dg $B$-module. Note that a perfect dg $B$-module (being quasi-isomorphic to its double $B$-dual) is quasi-isomorphic to zero if and only if its $B$-dual is quasi-isomorphic to zero. Therefore
$0\simeq \RHom_A(X,B)\simeq \RHom_B(X\otimes^L_A B,B)$ implies that $X\otimes^L_A B\simeq 0$. 
\end{proof}	
\begin{defi}\label{def:maindef}The derived localization of $A$ with respect to $\tau$ is a dg algebra $L_\tau(A)$ together with a map
$A\to L_\tau(A)$ making it a $\tau$-local $A$-module and such that for any dg algebra $B$ and a map $f:A\to B$ making $B$ a $\tau$-local $A$-module,
there is a unique up to homotopy map $L_\tau(A)\to B$ making the following diagram commutative in the homotopy category of dg algebras:
\begin{equation}\label{eq:homotcomm}\xymatrix{
A\ar^f[r]\ar[d]&B\\
L_\tau(A)\ar@{-->}[ur]
}
\end{equation}
\end{defi}
\begin{rem}\label{rem:derivedinversion}
	Let $s\in A$ be an $n$-cycle of a dg algebra $A$ for $n\in\mathbb Z$, and let $A/s$  be the homotopy cofiber of the left multiplication map $\Sigma^nA\to A$. Then $A/s$ is a perfect $A$-module and the localization with respect to the thick subcategory $\langle A/s\rangle$ generated by it, exists and is given explicitly by the formula $L_sA:=L_{\langle A/s\rangle}(A)= A*_{\ground[s]}^L\ground\langle s,s^{-1}\rangle$, cf. \cite{BCL}. We will now generalize this result to an arbitrary thick subcategory. 
\end{rem}
\begin{theorem}\label{thm:locexists}
For any dg algebra $A$ and any thick subcategory $\tau$ of $\Perf(A)$, the derived localization $L_\tau(A)$ exists and is unique up to homotopy. 
\end{theorem}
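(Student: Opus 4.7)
Uniqueness up to homotopy of $L_\tau(A)$ follows at once from the universal property in Definition~\ref{def:maindef}. For existence, my plan is to proceed by a small object / Bousfield localization argument, reducing the construction to one perfect module at a time.

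First I would observe that $\Perf(A)$ is essentially small, so there exists a set $S$ of perfect $A$-modules with $\tau=\thick{S}$. By Lemma~\ref{lem_tensor}, a dg $A$-algebra $B$ is $\tau$-local if and only if $B\otimes^L_A X\simeq 0$ for every $X\in\tau$; and since the class of perfect $X$ satisfying $B\otimes^L_A X\simeq 0$ is itself thick in $\Perf(A)$, $\tau$-locality is equivalent to the same condition restricted to $X\in S$. Thus it suffices to construct a universal object that inverts each element of $S$.

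The main step is to construct, for each $X\in S$, a cofibration $A\to L_X(A)$ in $A\downarrow\Alg$ that is universal among $A$-algebras $B$ satisfying $X\otimes^L_A B\simeq 0$. This extends the single-element case of \cite{BCL} (where $X$ is the cone of multiplication by a cycle on $A$): pick a finite cofibrant semi-free model of $X$ with graded basis $e_1,\ldots,e_r$, freely adjoin to $A$ the matrix coefficients of a prospective contracting homotopy $h$ of $X$, and attach further cells to enforce the relation $dh+hd=\id_X$. The resulting map $A\to L_X(A)$ is a free cofibration of dg algebras, and verification of its universal property reduces, via the standard lifting of cofibrations against acyclic fibrations of dg algebras, to the observation that null-homotopies of $\id_X\otimes 1_B$ exist and are unique up to coherent homotopy over any $B$ with $X\otimes^L_A B\simeq 0$.

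Finally I would assemble $L_\tau(A)$ as a fibrant replacement of $A$ in the left Bousfield localization of $A\downarrow\Alg$ at the set of maps $\{A\to L_X(A) \mid X\in S\}$, or equivalently as a transfinitely iterated derived pushout of the form $A\mapsto A*^L_A L_X(A)$ over a well-ordering of $S$. Since $S$ is a genuine set, the small object argument produces the fibrant replacement. The main obstacle in this plan is twofold: first, to carefully establish the universal property of each $L_X(A)$ in the perfect-module case, since the argument in \cite{BCL} uses specific properties of the polynomial ring $\ground[s]$ that must be replaced by a more delicate free-matrix construction; and second, to verify that the fixed points of the transfinite iteration (equivalently, the fibrant objects of the Bousfield-localized model category) coincide with the class of all $\tau$-local dg $A$-algebras, which combines the closure of $\tau$-locality under thick operations with a convergence analysis for the iteration.
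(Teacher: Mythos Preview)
Your strategy is viable but diverges from the paper at the crucial single-generator step. The paper does not adjoin a contracting homotopy; instead it uses a Morita reduction: set $E=\End_A(A\oplus X)$ for cofibrant perfect $X$, let $e\in E$ be the idempotent projecting onto the summand $A$, and observe that under the dg equivalence $\Mod A\simeq\Mod E$, killing $X$ on the $A$-side corresponds to killing $E/e$, i.e.\ inverting $e$, on the $E$-side. Since $L_eE\simeq E*^L_{\ground[e]}\ground\langle e,e^{-1}\rangle$ is already available from \cite{BCL}, the single-object localization is obtained with no new free-algebra construction; for general $\tau$ one then simply takes $L_\tau(A)=\coprod^L_{A,\,s\in S}L_{\langle X_s\rangle}(A)$ over a generating set $S$, with no transfinite iteration, left Bousfield localization of $A\downarrow\Alg$, or convergence analysis required. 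Your explicit route has the merit of producing a concrete cell presentation of $L_{\langle X\rangle}(A)$, but two points need attention: a perfect $A$-module is in general only a \emph{retract} of a finite semifree one, so ``pick a finite cofibrant semi-free model of $X$'' is not always possible, and you would instead have to work with a semifree hull $X'$ and adjoin a null-homotopy of the idempotent cutting out $X$ rather than of $\id_{X'}$; and the differential on the new generators $h_{ij}$ should be \emph{defined} so that $[d,h]=\id$ holds on the nose (with $d^2=0$ then following from $d_X^2=0$), rather than enforced by first adjoining cycles and then attaching further cells. The paper's Morita trick sidesteps both issues uniformly.
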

\begin{proof}
Let us first suppose that $\tau$ is generated by a single perfect object $X\in\Mod A$, assumed without loss of generality to be cofibrant. Denote by $E$ the dg algebra of endomorphisms of the $A$-module $A\oplus X$. Let $e$ be the element in $E$ given by the projection $E\to A$ along $X$ followed by the inclusion of $A$ into $E$. Then $e$ is a zero-cocycle and an idempotent of $E$. We will show that the desired (derived) localization $L_\tau(A)$ may be constructed as $L_eE$, the (derived) localization of $E$ at $e$, cf. Remark \ref{rem:derivedinversion}.   We have the following commutative diagram of dg categories:
\begin{eqnarray}\label{eq:Morita}\xymatrix {
\Mod A\ar^F[r]&\Mod E\\
\{ X\}\ar[r]\ar@{_{(}->}[u]&\{ E/e\}\ar@{_{(}->}[u]
}
\end{eqnarray}
Here  $F(M)=\Hom_A(A\oplus X,M)$, for $M\in\Mod A$, $E/e$ is the cofiber of the left multiplication by $e$ on $E$, $\{X\}$ the full dg subcategory of $\Mod A$ containing $X$ and closed with respect to arbitrary coproducts, homotopy cofibers and passing to quasi-isomorphic modules and similarly $\{E/e\}$ is the full dg subcategory of $\Mod A$ containing $E/e$, having arbitrary coproducts, homotopy cofibers and closed with respect to passing to quasi-isomorphic modules. Note that $E/e$ is quasi-isomorphic as an $E$-module to $(1-e)E\oplus\Sigma(1-e)E$ and $$F(X)=\Hom_A(A\oplus X,X)\cong X\oplus \Hom_A(X,X)\cong (1-e)X.$$ Therefore the image of $\{X\}$ under $F$ is $\{ E/e\}$ and the commutativity of \ref{eq:Morita} indeed holds.

Since $A\oplus X$ is a compact generator of $\D(A)$, the functor $F$ is a quasi-equivalence, as well as its restriction $\{X\}\to \{E/e\}$.  

Let us construct (a homotopy class of) a map of dg algebras $A\to L_eE$. Note that the functor $F$ could be viewed as tensoring over $A$ with the left $A$-module $\Hom_A(A\oplus X,A)\cong eE$. Composing $F$ with the localization functor into $\Mod L_eE$ we obtain the functor \[G:\Mod A\to\Mod L_eE:M\mapsto M\otimes_A(eE\otimes_EL_eE)\cong M\otimes_A eL_eE.\]
Since $G$ is a dg functor there is an induced (homotopy class of a) map 
of dg algebras \[f:A\cong\End_A(A)\to\End_{L_eE}(eL_eE)\simeq \End_{L_eE}(L_eE)\cong L_eE.\]
The commutative diagram (\ref{eq:Morita}) implies
\[X\otimes_AL_eE\simeq G(X)
\simeq F(X)\otimes_EL_eE\simeq 0,
\]
meaning that $L_eE$ is $\langle X \rangle$-local.

Let us now prove that $L_eE$ has the required universal property with respect to maps from $A$ into $\langle X\rangle$-local algebras.
To this end, let $\tilde{A}:=A\times \ground$. Then $\tilde{A}$ is a dg algebra with the differential $d(a,x)=d_A(a)$  and the product $(a,x)\cdot(
a^\prime,y)=(aa^\prime,xy)$ for $a,a^\prime\in A, x,y\in \ground$.  There is a dg algebra map $\tilde{f}:\tilde{A}\to E$ given by the diagonal embedding $(a,x)\mapsto (l_a,1-e)$ where $l_a$ is the action of $A$ on itself by the left multiplication by $a$. Note that $\tilde{f}$ could also be described as a map 
\[\tilde{A}\cong\End_{\tilde{A}}(\tilde{A})\to\End_{L_eE}(L_eE)\cong E
\]
induced by the functor $-\otimes_{\tilde{A}}E:\Mod{\tilde{A}}\to\Mod E$.

The idempotent $(1,0)\in \tilde{A}$ is mapped to $e\in E$ under $\tilde{f}$, and, slightly abusing notation, we will denote it also by $e$. We obtain an induced map on derived localizations $L_e(\tilde{A})\to L_e(E)$. This produces a homotopy class of dg algebra maps $A\to L_e(E)$ since clearly $L_e(\tilde{A})\simeq A$. This is the same map (up to homotopy) as $f$ as implied by the following diagram of dg categories and dg functors, commutative up to a natural isomorphism.
\[
\xymatrix{
\Mod \tilde{A}\ar^{-\otimes_{\tilde{A}}L_eE}[r]\ar_{-\otimes_{\tilde{A}}A}[d]&\Mod L_eE\\
\Mod A\ar^{-\otimes_AeE}[r]&\Mod E\ar_{-\otimes_EL_eE}[u]
}
\]
Let $B$ be a dg algebra supplied with a map $g:A\to B$ and $\langle X\rangle$-local (so that $X\otimes_A B\simeq 0$); we need to show that $g$ extends uniquely (up to homotopy) to a map $L_\tau(A):=L_eE\to B$. Consider the following diagram of dg algebras, commutative up to homotopy (excluding the dotted arrow).
\begin{equation}\label{eq:curvarrow}
\xymatrix{
\tilde{A}\ar^{\tilde{f}}[r]\ar[d]&E\cong\End_A(A\oplus X)\ar[d]\ar@/^/[ddr]\\
A\simeq L_e\tilde{A}\ar^{L_e(\tilde{f})}[r]\ar^g@/_/[drr]&L_eE=L_{\langle X\rangle}(A)\ar@{-->}[dr]\\
&&B\simeq\End_B(B\oplus X\otimes_A B)
}
\end{equation}
Here the arrow from $E$ to $B$ is induced by the functor $-\otimes_AB:\Mod A\to \Mod B$. By the universal property of the localization $L_eE$ the dotted arrow, making commutative the upper triangle of (\ref{eq:curvarrow}), exists. It follows that the lower triangle of  (\ref{eq:curvarrow}) commutes upon restriction to $\tilde{A}$. However since the map $\tilde{A}\to A\simeq L_eE$ becomes an isomorphism upon inverting $e$, and the dg algebra $B$ is $e$-inverting, we conclude that the lower triangle  of (\ref{eq:curvarrow}) is commutative. Conversely, a similar argument shows that any dotted arrow, making the lower triangle commutative, makes the upper trangle commutative and is, therefore, unique. 

So, the theorem is proved under the assumption that $\tau$ is generated by a single perfect object $X$. It is easy to see that for two perfect objects $X,Y\in \Mod A$, we have \[L_{\langle X\oplus Y\rangle}(A)\simeq L_{\langle X\rangle}(A)*^L_AL_{\langle Y\rangle}(A)\] as both sides satisfy the required universal property with respect to any dg algebra $B$ local with respect to both $X$ and $Y$. Letting $X_s,s\in S$ be a collection of compact generators of $\tau$ indexed by a set $S$, we can set $A_\tau:=\coprod^L_{A, s\in S}L_{\langle X_S\rangle}(A)$, the (derived) coproduct over $A$ of derived localizations $L_{\langle X_S\rangle}(A)$.
\end{proof}
\subsection{Derived localization of \texorpdfstring{$A$}{A}-algebras}
Recall that if 
$C$ is a closed model category and $X$ is an object of $C$ then the 
\emph{undercategory} of $X$ is the category 
$X\downarrow C$ with objects $Y\in C$ supplied with a map $X\to Y$ and morphisms being obvious commutative triangles in $C$. 
The undercategory of $X$ inherits the structure of a closed model category from $C$; in the case when $X$ is cofibrant, this undercategory is homotopy invariant in the sense that for any other cofibrant 
$X^\prime$ and a weak equivalence $X\to X^\prime$ the undercategories $X\downarrow C$ and $X^\prime\downarrow C$ are Quillen equivalent. 
\begin{lem}\label{lem:undercategory}
For a $\tau$-local $A$-algebra $B$ there exists a map of $A$-algebras $L_\tau(A)\to B$, unique up to homotopy in the undercategory  $A\downarrow\Alg$.
 \end{lem}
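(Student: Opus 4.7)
\emph{Plan.} I would split the proof into existence and uniqueness. For existence a standard rectification argument using the cofibrancy of the structural map $A\to L_\tau(A)$ suffices; for uniqueness a genuine strengthening of the universal property to the mapping-space level is needed, and this is the main technical point.

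\emph{Existence.} Without loss of generality I may assume that $A$ is cofibrant and that the structural map $i\colon A\to L_\tau(A)$ is a cofibration in $\Alg$ (by factoring it as a cofibration followed by a trivial fibration; this does not affect the homotopy type of $L_\tau(A)$ nor its $\tau$-locality, since both are weak-equivalence invariants). Given a $\tau$-local $A$-algebra $B$ with structural map $g\colon A\to B$, Definition~\ref{def:maindef} supplies a map $\tilde\phi\colon L_\tau(A)\to B$ in $\Alg$ and a homotopy $H\colon A\otimes I\to B$ with $H|_{A\otimes\{0\}}=\tilde\phi\circ i$ and $H|_{A\otimes\{1\}}=g$. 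The pair $(\tilde\phi,H)$ then defines a map out of the pushout $L_\tau(A)\otimes\{0\}\cup_{A\otimes\{0\}}A\otimes I$. Because $i$ is a cofibration and $\{0\}\hookrightarrow I$ is a trivial cofibration, left properness of $\Alg$ implies that this pushout embeds into $L_\tau(A)\otimes I$ via an acyclic cofibration; as every dg algebra is fibrant, we obtain an extension $\tilde H\colon L_\tau(A)\otimes I\to B$. The endpoint $\phi:=\tilde H|_{L_\tau(A)\otimes\{1\}}$ is then a strict $A$-algebra map $L_\tau(A)\to B$, homotopic to $\tilde\phi$ in $\Alg$.

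\emph{Uniqueness.} The key step is to upgrade Definition~\ref{def:maindef} to a statement about full mapping spaces: for any $\tau$-local dg algebra $B$, the restriction map
\[
\operatorname{Map}_\Alg(L_\tau(A),B)\longrightarrow \operatorname{Map}_\Alg(A,B)
\]
is a weak equivalence onto the union of connected components consisting of maps $A\to B$ making $B$ a $\tau$-local $A$-algebra. Assuming this, the derived mapping space $\operatorname{Map}_{A\downarrow\Alg}(L_\tau(A),B)$ is the homotopy fibre of the above restriction over $g$, hence contractible. Contractibility delivers both existence (recovering the first part) and uniqueness up to homotopy in $A\downarrow\Alg$.

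\emph{Obstacle.} The real work is verifying the mapping-space upgrade. I would proceed inductively through the construction of $L_\tau(A)$ in the proof of Theorem~\ref{thm:locexists}, checking that each building block preserves this stronger form of the universal property: (i) derived localization at a single zero-cycle, which is exactly the content of the mapping space computation in \cite{BCL}; (ii) the Morita-type replacement $A\rightsquigarrow\End_A(A\oplus X)$, which preserves mapping spaces up to weak equivalence via the derived equivalence $\Mod A\simeq \Mod \End_A(A\oplus X)$ used in the proof of Theorem~\ref{thm:locexists}; and (iii) derived free products $B*^L_A C$, which correspond to homotopy products of mapping spaces out of $B$ and $C$ under $A$. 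Once these three stability properties are established, the strengthened universal property follows and the lemma is a formal consequence of the fibre sequence displayed above.
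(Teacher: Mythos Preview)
Your existence argument is fine and more elementary than the paper's (which reruns the diagram chase from Theorem~\ref{thm:locexists}): once $A\to L_\tau(A)$ is arranged to be a cofibration, the homotopy extension property rectifies the homotopy-commutative triangle supplied by Definition~\ref{def:maindef} to a strict one. A minor quibble: the justification is not ``left properness'' but the lifting axioms for cofibrations, which give the HEP in any model category with all objects fibrant.

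For uniqueness, the strategy of upgrading to mapping spaces is sound, and indeed the paper establishes exactly this statement later (Proposition~\ref{prop:definitions})---but there it is \emph{deduced from} the present lemma via Proposition~\ref{prop:undercategory} and Corollaries~\ref{cor:homotepi1}--\ref{cor:homotepi2}, so you must prove it independently. Steps~(i) and~(iii) are unproblematic, but step~(ii) contains a real gap: the derived Morita equivalence $\Mod A\simeq\Mod E$ with $E=\End_A(A\oplus X)$ relates \emph{module} categories, not dg-algebra mapping spaces, and there is no direct comparison between $\Map_\Alg(A,B)$ and $\Map_\Alg(E,B)$ coming from Morita theory alone. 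The paper's device for this transfer is the auxiliary algebra $\tilde A=A\times\ground$ together with its maps $\tilde A\to E$ and $\tilde A\to A$. The key point is that $\tilde A\to A\simeq L_e\tilde A$ is itself a derived localization at an idempotent, hence a homotopy epimorphism by~\cite{BCL}, so the homotopy category of $A\downarrow\Alg$ sits fully faithfully inside that of $\tilde A\downarrow\Alg$; uniqueness can then be pulled back from $E\downarrow\Alg$ (where it is the defining property of $L_eE$ in~\cite[Definition~3.3]{BCL}) through $\tilde A\downarrow\Alg$ to $A\downarrow\Alg$. Without this mechanism or an equivalent one, your step~(ii) does not go through as stated.
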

\begin{proof}
As in the proof of Theorem \ref{thm:locexists} we first consider the case when $\tau$ is generated by a single perfect $A$-module $X$ and arguing similarly with diagram (\ref{eq:curvarrow}), we conclude that there exists a map $g:L_\tau\to B$ making the diagram of dg algebras (which is a fragment of the diagram (\ref{eq:curvarrow}))
\begin{equation}\label{eq:fragment}
\xymatrix{
A\ar[dr]\ar[d]\\
L_\tau(A)\ar^g[r]&B
}
\end{equation}
homotopy commutative in the category of dg algebras. Thus, $g$ could be viewed as a map in the undercategory $A\downarrow\Alg$. The uniqueness of $g$ is proved similarly: suppose that there exists another map of $A$-algebras $g^\prime:L_\tau(A)\to B$ making the (\ref{eq:fragment}) homotopy commutative. Then $g$ and $g^\prime$ are homotopic in $A\downarrow\Alg$ if and only if they are homotopic in $\tilde{A}\downarrow\Alg$ and this, in turn, is equivalent to them being homotopic in $E\downarrow\Alg$. But they are indeed homotopic in $E\downarrow\Alg$ by the defining property of the derived localization $L_eE$, cf. \cite[Definition 3.3]{BCL}. The proof is finished as that of Theorem \ref{thm:locexists}. 
\end{proof}
\begin{rem}
Another way to formulate Lemma \ref{lem:undercategory} is to say that $L_\tau(A)$ is the initial object of the subcategory of the homotopy category of $A\downarrow\Alg$ consisting of $\tau$-local $A$-algebras.	It may appear that this statement and its proof are just rephrasing the corresponding parts of the statement and proof of Theorem \ref{thm:locexists}. The substantive difference is that two maps of $A$-algebras may be homotopic as maps of dg algebras but \emph{not} as maps in the undercategory $A\downarrow\Alg$. We will discuss this discrepancy in more detail below.  
\end{rem}
Let now $B$ be an $A$-algebra, assumed, without loss of generality, to be cofibrant in $A\downarrow \Alg$ (meaning that the given map $A\to B$ is a cofibration of dg algebras). The induction functor $-\otimes_AB:\Mod A\mapsto\Mod B$ takes $\Perf(A)$ into $\Perf(B)$ and (abusing the notation slightly) we will denote by the image of the thick subcategory $\tau\in\Perf(A)$ under this functor, by the same symbol $\tau$.
\begin{prop}\label{prop:undercategory}
We have a natural isomorphism $L_\tau(B)\cong B\ast^L_AL_\tau(A)$ in the homotopy category of $A\downarrow \Alg$.
\end{prop}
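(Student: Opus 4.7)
The plan is to verify that $B *^L_A L_\tau(A)$ satisfies the universal property characterizing $L_\tau(B)$ in the undercategory $A \downarrow \Alg$ (or equivalently, $B\downarrow \Alg$), and then invoke Lemma \ref{lem:undercategory} to obtain the isomorphism. I will break this into two steps: $\tau$-locality, and universality.

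First I would prove that $B *^L_A L_\tau(A)$ is $\tau$-local. By Lemma \ref{lem_tensor}, it is enough to show $X \otimes^L_A (B *^L_A L_\tau(A)) \simeq 0$ for every compact generator $X$ of $\tau$ (viewed as a perfect $A$-module; this is equivalent to $(X\otimes^L_A B)\otimes^L_B(B*^L_AL_\tau(A))\simeq 0$, which is the $B$-side formulation). The key observation is that the canonical map $L_\tau(A) \to B *^L_A L_\tau(A)$ endows the free product with the structure of an $L_\tau(A)$-algebra. Consequently
\[
X \otimes^L_A \bigl(B *^L_A L_\tau(A)\bigr) \;\simeq\; \bigl(X \otimes^L_A L_\tau(A)\bigr) \otimes^L_{L_\tau(A)} \bigl(B *^L_A L_\tau(A)\bigr) \;\simeq\; 0,
\]
since $X \otimes^L_A L_\tau(A) \simeq 0$ by the $\tau$-locality of $L_\tau(A)$ established in Theorem \ref{thm:locexists} and Lemma \ref{lem_tensor}.

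Next I would verify the universal property. Let $C$ be any $\tau$-local $B$-algebra, thought of as an $A$-algebra via the composition $A \to B \to C$. Note that $\tau$-locality of $C$ as a $B$-algebra is the same as its $\tau$-locality as an $A$-algebra, because $X \otimes_A^L C \simeq (X\otimes_A^L B)\otimes_B^L C$ for any $X \in \tau$. By Lemma \ref{lem:undercategory} applied to $A$, there is a map $L_\tau(A) \to C$ in $A \downarrow \Alg$, unique up to homotopy. Combined with the structure map $B \to C$ (and the fact that both morphisms are maps of $A$-algebras by assumption), the universal property of the homotopy pushout $B *^L_A L_\tau(A)$ in $A \downarrow \Alg$ yields a map
\[
B *^L_A L_\tau(A) \longrightarrow C,
\]
unique up to homotopy as a map of $A$-algebras. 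Since this map extends $B \to C$, it is automatically a map in $B \downarrow \Alg$.

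Combining the two steps, $B *^L_A L_\tau(A)$ is an initial object (in the homotopy category) among $\tau$-local $B$-algebras under $B$, and Lemma \ref{lem:undercategory} applied to $B$ identifies this initial object with $L_\tau(B)$. The resulting natural isomorphism $L_\tau(B) \simeq B *^L_A L_\tau(A)$ holds in $B\downarrow\Alg$, and a fortiori in $A\downarrow\Alg$. The only subtle point — and the one I would treat carefully — is the interchange between the $A$-undercategory and $B$-undercategory uniqueness statements; this is harmless because every $B$-algebra map is an $A$-algebra map, and a homotopy of $A$-algebra maps that fixes $B$ (which is forced here since both maps equal the structure map on $B$) is the same as a homotopy of $B$-algebra maps.
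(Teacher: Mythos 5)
Your proof is essentially the paper's proof, unpacked. The paper records it categorically: there is a Quillen adjunction $A\downarrow\Alg\rightleftarrows B\downarrow\Alg$ with left adjoint $-\ast_AB$ and right adjoint restriction, this adjunction restricts to $\tau$-local algebras on both sides, and the result follows because left adjoints preserve initial objects, $L_\tau(A)$ being the initial object of the homotopy category of $\tau$-local $A$-algebras by Lemma \ref{lem:undercategory}. Your two steps are exactly what ``restricts to $\tau$-local algebras'' and ``preserves initial objects'' encode: the $\tau$-locality of $B\ast^L_AL_\tau(A)$ is the statement that the left adjoint lands in $\tau$-local $B$-algebras, your observation that $\tau$-locality can be tested over $A$ is the statement that the right adjoint lands in $\tau$-local $A$-algebras, and your universal-property check is the initiality argument.

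The one place you should tighten the argument is the final paragraph. A homotopy between two $A$-algebra maps $B\ast^L_AL_\tau(A)\to C$ whose endpoints both restrict to the structure map on $B$ is a path in $\Map_A(B\ast^L_AL_\tau(A),C)$; such a path does not automatically remain inside the fiber $\Map_B(B\ast^L_AL_\tau(A),C)$ over the structure map, so it is not literally ``the same as'' a homotopy of $B$-algebra maps. The clean way to close the gap is to use the adjunction at the level of mapping spaces: $\Map_B(B\ast^L_AL_\tau(A),C)\simeq\Map_A(L_\tau(A),C)$, and the right-hand side is contractible by Proposition \ref{prop:definitions}. This gives uniqueness up to homotopy directly in $B\downarrow\Alg$, which is what Lemma \ref{lem:undercategory} applied to $B$ requires; the isomorphism in $A\downarrow\Alg$ then follows by forgetting the $B$-structure, as you say. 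The paper's phrasing via Quillen adjunctions sidesteps this point automatically.
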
	
\begin{proof}
This is similar to \cite[Lemma 3.7]{BCL}. There is a Quillen adjunction $A\downarrow\Alg\rightleftarrows B\downarrow\Alg$ with the left adjoint $-\ast_AB$ and the right adjoint being the restriction functor. It is easy to see that this adjunction restricts to an adjunction
between $\tau$-local $A$-algebras and $\tau$-local $B$-algebras and the corresponding homotopy categories. Since left adjoints preserve initial objects, the desired statement follows from Lemma \ref{lem:undercategory}.
\end{proof}
\begin{cor}\label{cor:homotepi1}
	The map of dg algebras $L_\tau(A)\to L_\tau(A)*^L_AL_\tau(A)$ given by the inclusion of the either factor is a quasi-isomorphism.
\end{cor}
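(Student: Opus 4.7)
The plan is to deduce this by applying Proposition \ref{prop:undercategory} to $B := L_\tau(A)$, reducing the statement to the idempotency of derived localization.

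First I would observe that $B := L_\tau(A)$ is by construction a $\tau$-local $A$-algebra, so by Lemma \ref{lem_tensor} every object of $\tau$ becomes contractible after tensoring up along $A \to B$. Consequently the image of $\tau$ under the induction functor $-\otimes^L_A B \colon \Perf(A) \to \Perf(B)$ is the zero thick subcategory of $\Perf(B)$, and hence $L_\tau(B) \simeq B$ as $B$-algebras: the identity $B \to B$ trivially satisfies the universal property of Lemma \ref{lem:undercategory} for localization at the zero thick subcategory.

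Next I would apply Proposition \ref{prop:undercategory} with this choice of $B$ to obtain
$$L_\tau(A) \;\simeq\; L_\tau(B) \;\cong\; B *^L_A L_\tau(A) \;=\; L_\tau(A) *^L_A L_\tau(A)$$
in the homotopy category of $A \downarrow \Alg$. Tracing through the construction of this natural isomorphism in Proposition \ref{prop:undercategory}, the canonical localization map $B \to L_\tau(B)$ (which we have just shown to be a quasi-isomorphism) corresponds to the inclusion $L_\tau(A) \to L_\tau(A) *^L_A L_\tau(A)$ of the first factor. This gives the claim for the left inclusion, and the claim for the right inclusion follows from the evident symmetry of the coproduct $-*^L_A -$.

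I do not foresee any real obstacle here: the argument is simply the idempotency relation $L_\tau \circ L_\tau \simeq L_\tau$ recast through the pushout description provided by Proposition \ref{prop:undercategory}. The only point requiring modest care is the identification, inside the undercategory $A \downarrow \Alg$, of the structural arrow $B \to L_\tau(B)$ with the left-factor inclusion into the derived coproduct, but this is essentially formal bookkeeping once Proposition \ref{prop:undercategory} is in hand.
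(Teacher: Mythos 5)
Your proof is correct and follows the route the paper itself intends: the paper states the corollary without proof, as an immediate consequence of Proposition \ref{prop:undercategory} applied with $B=L_\tau(A)$, together with exactly the observation you make that $L_\tau(A)$ is $\tau$-local so the induced thick subcategory in $\Perf(L_\tau(A))$ is zero (via Lemma \ref{lem_tensor}), forcing $L_\tau(L_\tau(A))\simeq L_\tau(A)$. The only cosmetic point is that the structure map provided by Proposition \ref{prop:undercategory} naturally identifies with the inclusion of the $B$-factor rather than the $L_\tau(A)$-factor, but as you note this is immaterial by the symmetry of the derived free product.
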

Recall from \cite[Chapter 5]{Hov99}, for any closed model category $C$, the notion of a \emph{derived mapping space}, a simplicial set $\Map(X,Y)$ where $X,Y\in C$, generalizing the usual simplicial mapping space in a simplicial model category. Recall also from \cite{Mur16} that a morphism $f:X\to Y$ in a model category is said to be a \emph{homotopy
epimorphism} if for any object $Z$, the induced morphism $\Map(Y,Z)\to\Map(X,Z)$
is an injection on connected components of the corresponding simplicial sets and an
isomorphism on homotopy groups for any choice of a base point. Then we have the following generalization of \cite[Proposition 3.17]{BCL}, which follows, as in op.cit. from Corollary \ref{cor:homotepi1}.
\begin{cor}\label{cor:homotepi2}
	The localization map $A\to L_\tau(A)$ is a homotopy epimorphism.\qed
\end{cor}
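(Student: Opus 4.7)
The plan is to reduce the statement to the characterization of homotopy epimorphisms by a homotopy pushout condition, and then invoke Corollary~\ref{cor:homotepi1}. Precisely, I would argue that a map $f\colon A\to B$ in $\Alg$ is a homotopy epimorphism if and only if either (equivalently, both) of the coproduct inclusions $i_1,i_2\colon B\to B*^L_A B$ is a quasi-isomorphism. Granting this, the conclusion is immediate: Corollary~\ref{cor:homotepi1} asserts exactly that either inclusion $L_\tau(A)\to L_\tau(A)*^L_A L_\tau(A)$ is a quasi-isomorphism, hence $A\to L_\tau(A)$ is a homotopy epimorphism.

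To justify the characterization, I would recall the standard fact that the derived mapping space functor $\Map(-,Z)$ converts homotopy pushouts into homotopy pullbacks, so that for any dg algebra $Z$,
\[
\Map(B*^L_A B,Z)\;\simeq\;\Map(B,Z)\times^h_{\Map(A,Z)}\Map(B,Z).
\]
Under the map induced by the fold map $B*^L_A B\to B$, the left-hand side receives a morphism from $\Map(B,Z)$ whose realization on the right-hand side is the diagonal $\Delta$. If $i_1$ (or equivalently $i_2$ or the fold map) is a quasi-isomorphism, then $\Delta$ is a weak equivalence of simplicial sets, which is exactly the condition that the map $\Map(B,Z)\to\Map(A,Z)$ be a homotopy monomorphism, i.e.\ injective on $\pi_0$ and an isomorphism on all higher homotopy groups at any basepoint. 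Conversely, if the latter holds for all $Z$, then the diagonal into the homotopy pullback is a weak equivalence for all $Z$, and by the Yoneda lemma in the homotopy category this forces $i_1$ to be a quasi-isomorphism.

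Applying this to $B=L_\tau(A)$ and using Corollary~\ref{cor:homotepi1}, the map $A\to L_\tau(A)$ is a homotopy epimorphism. No substantive obstacle arises: the entire force of the result is packaged into Corollary~\ref{cor:homotepi1}, which in turn was proved using the universal property of derived localization established in Theorem~\ref{thm:locexists} and Proposition~\ref{prop:undercategory}. This is the same pattern of argument used in \cite[Proposition~3.17]{BCL} for the single-element case.
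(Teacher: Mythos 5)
Your proof is correct and takes exactly the route the paper intends: the paper explicitly derives the corollary from Corollary~\ref{cor:homotepi1} "as in \cite[Proposition 3.17]{BCL}," and the characterization of homotopy epimorphisms via the homotopy pushout $B*^L_AB$ (that either coproduct inclusion being a quasi-isomorphism is equivalent to $\Map(B,-)\to\Map(A,-)$ being a homotopy monomorphism) is precisely the standard argument being invoked there. You have just spelled out what the paper cites.
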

	
\subsection{Homotopy coherence} We defined the derived localization $L_\tau(A)$ of a dg algebra $A$ through a certain universal property formulated in the \emph{homotopy category} of dg algebras. It makes sense to ask whether more structured notions, taking into account the $\infty$-structure of the category of dg algebras (i.e. the homotopy type of mapping spaces) can reasonably be considered. We will show that such, apparently more refined, versions of derived localization, nevertheless, turn out to be equivalent to the one defined above. 

\begin{prop}\label{prop:definitions}
	Let $C$ be an $\tau$-local $A$-algebra. The following statements are equivalent:
	\begin{enumerate}
		\item $C$ is isomorphic in the homotopy category of $A\downarrow\Alg$ to $L_\tau(A)$
		\item For any $\tau$-local $A$-algebra $B$ there is a unique map $C\to B$ in the homotopy category of $A\downarrow \Alg$.
		\item For any $\tau$-local $A$-algebra $B$ the map $\Map(L_\tau(A),B)\to\Map(A,B)$ induced by the localization map $A\to L_\tau(A)$ is a weak equivalence.
		\item  For any $\tau$-local $A$-algebra $B$ the mapping space $\Map_A(L_\tau(A),B)$ in $A\downarrow \Alg$ is contractible.
	\end{enumerate} 
\end{prop}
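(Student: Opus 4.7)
The plan is to cycle through the four conditions, using the universal property of the derived localization established in Lemma~\ref{lem:undercategory} together with the homotopy epimorphism property from Corollary~\ref{cor:homotepi2}. The implication $(1) \Rightarrow (2)$ is immediate: simply transport the universal property of Lemma~\ref{lem:undercategory} across the given isomorphism $C \simeq L_\tau(A)$ in the homotopy category of $A\downarrow\Alg$. Conversely, for $(2) \Rightarrow (1)$ I would run a standard universal-property argument: applying (2) with $B = L_\tau(A)$ produces a map $C \to L_\tau(A)$ in $\mathrm{Ho}(A\downarrow\Alg)$, and applying Lemma~\ref{lem:undercategory} with target $C$ (which is $\tau$-local by assumption) produces a reverse map $L_\tau(A) \to C$; the uniqueness clauses then force both composites to be the identity.

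For the equivalence with (4), I would use the standard homotopy fibre sequence
\[
\Map_A(C, B) \longrightarrow \Map(C, B) \longrightarrow \Map(A, B),
\]
with fibre over the structure map $\phi_0 \colon A \to B$ of the $A$-algebra $B$. Given (1), Corollary~\ref{cor:homotepi2} ensures that the right-hand map is a weak equivalence onto a union of path components of $\Map(A, B)$, while Lemma~\ref{lem:undercategory} places $\phi_0$ in its image by providing a lift $C \to B$. Hence the fibre is contractible, establishing (4). The converse $(4) \Rightarrow (2)$ is then immediate, since a contractible mapping space in $A\downarrow\Alg$ has exactly one point up to homotopy, which is the existence-and-uniqueness statement (2).

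Finally, the equivalence $(3) \Leftrightarrow (4)$ is read off from the same homotopy fibre sequence combined with Corollary~\ref{cor:homotepi2}: because $A \to L_\tau(A)$ is a homotopy epimorphism, the map $\Map(L_\tau(A), B) \to \Map(A, B)$ is automatically a weak equivalence onto a union of path components of $\Map(A, B)$, so being a genuine weak equivalence (as in (3)) is equivalent to the contractibility of every non-empty fibre (as in (4)). The main obstacle is the book-keeping around the quantifier ``for any $\tau$-local $A$-algebra $B$'' appearing in (3) and (4): one must verify that the $\tau$-local structures on the underlying object $B$ are precisely those $\psi \in \pi_0\Map(A, B)$ for which the fibre is non-empty, which in turn uses Lemma~\ref{lem:undercategory} together with the criterion of Lemma~\ref{lem_tensor} to identify lifts to $L_\tau(A)$ with $\tau$-local $A$-algebra structures. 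Once this identification is in place, the two conditions encode the same information, and the equivalence follows.
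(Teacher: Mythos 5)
Your proof tracks the paper's argument closely: both rest on the homotopy fibre sequence
\[
\Map_A(L_\tau(A),B)\longrightarrow\Map(L_\tau(A),B)\longrightarrow\Map(A,B),
\]
combined with Lemma~\ref{lem:undercategory} and Corollary~\ref{cor:homotepi2}. The one genuine divergence is in $(1)\Leftrightarrow(2)$: the paper reads $(2)\Rightarrow(1)$ off a $\pi_0$ fragment of the long exact sequence of this fibration, whereas you run an elementary Yoneda-type argument --- apply $(2)$ with $B=L_\tau(A)$ to get a map $C\to L_\tau(A)$, invoke Lemma~\ref{lem:undercategory} with target $C$ to get the reverse map, and use the two uniqueness clauses to force the composites to be identities. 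Both are correct; yours is more transparent, the paper's marginally more economical since the fibration is already on the table.

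You are right to flag the quantification wrinkle in $(3)$--$(4)$, and you have the correct diagnosis, but you do not finish it. The fibre of $\Map(L_\tau(A),B)\to\Map(A,B)$ over $\psi\in\pi_0\Map(A,B)$ is non-empty precisely when $(B,\psi)$ is $\tau$-local: a lift of $\psi$ through $L_\tau(A)$ makes $B$ an $L_\tau(A)$-module and hence $\tau$-local, and conversely Lemma~\ref{lem:undercategory} supplies a lift whenever $(B,\psi)$ is $\tau$-local. Since a $\tau$-local $B$ will typically also admit non-$\tau$-local structure maps $\psi$ (for $A=\ground[x]$, $\tau=\langle A/x\rangle$, $B=\ground[x,x^{-1}]$, the map sending $x\mapsto 0$ is such a $\psi$), the map in $(3)$ generically fails to be surjective on $\pi_0$ and hence fails to be a weak equivalence of the entire spaces, even when $(1),(2),(4)$ hold. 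The paper's ``thus $(3)$ and $(4)$ are equivalent'' glosses over this. Your closing remark that ``once this identification is in place, the two conditions encode the same information'' does not by itself close the gap: you need to either read $(3)$ as a weak equivalence onto the union of path components actually hit (which is then automatic from the homotopy epimorphism property once the fibre over $\phi_0$ is non-empty), or restate $(3)$ with the codomain replaced by the subspace of $\tau$-local homotopy classes. Make that choice explicit rather than leaving it implicit; the rest of your argument then goes through.
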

\begin{proof}
There is the following homotopy fiber sequence of simplicial sets:
\begin{equation}\label{eq:fiberseq}
\Map_A(L_\tau(A),B)\to \Map(L_\tau(A),B)\to \Map(A,B);
\end{equation}
here $\Map_A(L_\tau(A),B)$ is the homotopy fiber over the map $A\to B$ that determines $B$ as an $A$-algebra. The second map in (\ref{eq:fiberseq}) is a weak equivalence if and only if its homotopy fiber is contractible over every point. Thus, (3) and (4) are equivalent. The implication $(2)\Rightarrow(1)$ is implied by the following fragment of the long exact sequence of the fibration (\ref{eq:fiberseq}):
\[
\rightarrow\pi_0\Map_A(L_\tau(A),B)\to \pi_0\Map(L_\tau(A),B)\to \pi_0\Map(A,B),
\]
and the reverse implication $(1)\Rightarrow(2)$ is Lemma \ref{lem:undercategory}. The implication $(3)\Rightarrow(1)$ is obvious. Finally, the implication $(1)\Leftarrow(3)$ follows from Corollary \ref{cor:homotepi2}.
\end{proof}
\begin{rem} Let us call a map $B\to C$ in 
	$A\downarrow \Alg$ a $\tau$-local equivalence if  
	for any $\tau$-local $A$-algebra $X$ there 
	is a weak equivalence $\Map_A(C,X)\to\Map_A(B,X)$. Then Proposition \ref{prop:definitions} implies that  for an $A$-algebra $B$ its derived localization $L_\tau(B)\simeq L_\tau(A)*^L_AB$ is the Bousfield localization of $B$ in $A\downarrow\Alg$ with respect to $\tau$-local equivalences, cf. \cite{Hirschhorn03} regarding this notion.
\end{rem}
\subsection{Module localization} We will now relate the notion of derived localization of algebras to the Bousfied localization of $\D(A)$. 
Localization functors exist for a large class of triangulated categories $\C$ and thick subcategories $S$. For example, such is the case when $\C=\D(A)$, the derived category of a dg algebra and $S=\Loc(\tau)$ where $\tau$ is a perfect thick subcategory of $\D(A)$.

The localization of $M\in \Mod A$ with respect to a thick subcategory $\tau\in\Perf(A)$ is a $\tau$-local
$A$-module $N$ together with a map $f:M\to N$ that is a local $\tau$-equivalence, i.e. for any $\tau$-local $A$-module $L$ the induced map
$f^*:\RHom(N,L)\to\RHom(M,L)$ is a quasi-isomorphism. A localization of an $A$-module is clearly defined up to a quasi-isomorphism and, (slightly blurring the distinstion between the category $\Mod A$ and $\D(A)$) we will refer to it as \emph{the} localization of $M$ and denote by $L_\tau^{\Mod A}(M)$.  The following results connect module localization and (derived) algebra localization, generalizing the corresponding results in \cite{BCL}.
\begin{prop}[{\cite[Proposition 2.5]{Dwy06}, \cite[Theorem 4.12]{BCL}}]\label{prop:algmod}
There is a dg algebra $X$ supplied with a dg algebra map $A\to X$ such that $X\simeq L_\tau^{\Mod A}(M)$ as an $A$-module.	
\end{prop}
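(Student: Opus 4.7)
The plan is to take $X := L_\tau(A)$, the derived localization dg algebra constructed in Theorem \ref{thm:locexists}, equipped with its canonical map $A \to L_\tau(A)$, and to verify that its underlying $A$-module satisfies the universal property of the Bousfield localization $L_\tau^{\Mod A}(A)$.

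First, I would note that $L_\tau(A)$ is $\tau$-local as an $A$-module: by construction of $L_\tau(A)$ we have $\RHom_A(Y, L_\tau(A)) \simeq 0$ for every $Y \in \tau$, which is the module-theoretic locality condition (and is equivalent, by Lemma \ref{lem_tensor}, to $L_\tau(A) \otimes_A^L Y \simeq 0$).

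Second, I need to check that the map $A \to L_\tau(A)$ is a $\tau$-local equivalence in $\Mod A$, i.e.\ that for every $\tau$-local $A$-module $N$ the canonical map $\RHom_A(L_\tau(A), N) \to \RHom_A(A, N) \simeq N$ is a quasi-isomorphism. I would first treat the case where $\tau = \langle P \rangle$ is generated by a single perfect $A$-module $P$. Here the proof of Theorem \ref{thm:locexists} gives $L_{\langle P \rangle}(A) \simeq L_e(E)$, with $E = \REnd_A(A \oplus P)$ and $e \in E$ the idempotent projection onto $A$. By \cite[Theorem 4.12]{BCL} the underlying $E$-module of $L_e(E)$ is the Bousfield localization of $E$ at the thick subcategory $\langle E/e \rangle$. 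The Morita equivalence $\Mod A \simeq \Mod E$ induced by the compact generator $A \oplus P$ matches $\langle P \rangle$ with $\langle E/e \rangle$, takes $\tau$-local modules to $\tau$-local modules, and intertwines the corresponding Bousfield localizations. Combined with the additivity $L_\tau^{\Mod A}(A \oplus P) \simeq L_\tau^{\Mod A}(A)$ (using $L_\tau^{\Mod A}(P) \simeq 0$), this yields the universal property for single-generator $\tau$. For a general thick $\tau$ generated by a set $\{P_s\}_{s\in S}$, the formula $L_\tau(A) \simeq \coprod^L_{A,\, s \in S} L_{\langle P_s \rangle}(A)$ recorded at the end of the proof of Theorem \ref{thm:locexists} reduces matters to the single-generator case: an $A$-module is $\tau$-local iff it is $\langle P_s \rangle$-local for every $s$, so the derived free product of the algebras $L_{\langle P_s \rangle}(A)$ over $A$ realizes the simultaneous Bousfield localization at all $\langle P_s \rangle$, which is the full $\tau$-localization.

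The main obstacle is the single-generator step, specifically the Morita transfer: one has to verify carefully that the $A$-module structure on $L_\tau(A)$ inherited from the dg algebra map $A \to L_\tau(A)$ agrees with the one obtained by transporting $L_e(E)$ back to $\Mod A$ through the Morita equivalence, and that under this transport the Bousfield localization of $E$-modules at $\langle E/e \rangle$ supplied by \cite[Theorem 4.12]{BCL} becomes the Bousfield localization of $A$-modules at $\langle P \rangle$. The compatibility of derived free products of algebras with simultaneous Bousfield localization on the module level, needed in the reduction step, is a further formal but non-trivial verification.
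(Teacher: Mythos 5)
Your proposal takes a genuinely different route from the paper's, and it attempts to prove substantially more than the proposition requires. The paper's own argument is the Dwyer--Greenlees bimodule trick: it introduces the thick subcategory $\tau\otimes 1\subseteq\Perf(A\otimes A^{op})$ of bimodules generated by $X\otimes A^{op}$ with $X\in\tau$, forms the \emph{bimodule} localization $L_{\tau\otimes 1}^{\Mod A\otimes A^{op}}(A)$, and observes that $\REnd_{A^{op}}$ of this object is manifestly an $A$-algebra and is quasi-isomorphic as a one-sided $A$-module to $L_\tau^{\Mod A}(A)$. That's the whole proof; no identification with $L_\tau(A)$ is attempted at this stage. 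You instead declare $X:=L_\tau(A)$ and try to verify directly that it is the Bousfield localization of $A$ as a module. If this worked, it would simultaneously prove Proposition~\ref{prop:algmod} \emph{and} Theorem~\ref{thm:algmod}; the paper deliberately separates these, proving only the structural existence statement here (via the bimodule construction) and handling the identification with $L_\tau(A)$ afterwards by comparing universal properties.

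The two obstacles you flag are real, and the second one is more serious than ``a further formal verification.'' The Morita-transfer step can be made to work (the equivalence $F$ preserves Bousfield localizations and $L^{\Mod E}_{\langle E/e\rangle}(eE)\simeq L^{\Mod E}_{\langle E/e\rangle}(E)\simeq L_eE$ since $(1-e)E$ is killed; the remaining issue is to match the $A$-module structure coming from the algebra map $A\to L_eE$ with the one obtained by transporting back along $F^{-1}$). But the multi-generator step is genuinely delicate. The identity $L_\tau(A)\simeq\coprod^L_{A,s}L_{\langle P_s\rangle}(A)$ in the proof of Theorem~\ref{thm:locexists} is established by comparing universal properties as $A$-\emph{algebras}; nothing in that argument tells you what the underlying $A$-module of the derived free product looks like. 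To conclude that the map $A\to\coprod^L_{A,s}L_{\langle P_s\rangle}(A)$ is a $\tau$-local equivalence, you would need to control the cofiber, e.g.\ via the bar-type filtration of a derived free product whose associated graded involves iterated tensor products $\overline{B}_{s_1}\otimes^L_A\overline{B}_{s_2}\otimes^L_A\cdots$ of the cofibers $\overline{B}_s=\operatorname{cofib}(A\to L_{\langle P_s\rangle}(A))$, and check that each such tensor product lies in $\Loc(\tau)$ --- this in turn requires a bimodule-style argument, since $\Loc(\tau)$ is a priori a subcategory of one-sided modules and is not obviously stable under $-\otimes^L_A M$. In other words, pushing your reduction through essentially reintroduces the bimodule considerations that the paper's proof starts from, so the short cut does not actually avoid them. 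As written, your argument has a genuine gap at this step.
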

\begin{proof}
	Let $\tau\otimes 1$ be the thick subcategory in $\Perf(A\otimes A^{op})$ (i.e. perfect $A$-bimodules) generated by $A$-bimodules of the form $X\otimes A^{op}$ with $X\in\tau$. Then the argument of \cite[Theorem 4.12]{BCL} shows that
	$L_\tau^{\Mod A}(M)$ is quasi-isomorphic as an $A$-module to $\REnd_{A^{op}}(L_{\tau\otimes 1}^{\Mod A\otimes A^{op}}(A))$ and the latter is clearly an $A$-algebra.
\end{proof}	
The following result is proved in \cite[Proposition 2.10]{Dwy06}.
\begin{prop}\label{prop:smashing} 
	Let $M$ be an $A$-module. Then $M\simeq M\otimes_A^LA\to M\otimes_A^LL_\tau^{\Mod A}A$
is the localization $L_\tau^{\Mod A}(M)$ of $M$.
\end{prop}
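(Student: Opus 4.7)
The plan is to verify the two defining properties of the module localization for $N := M \otimes_A^L L_\tau^{\Mod A}(A)$, namely that $N$ is $\tau$-local, and that the canonical map $M \to N$ (induced by $A \to L_\tau^{\Mod A}(A)$) is a local $\tau$-equivalence. Throughout we use Proposition \ref{prop:algmod} to promote $L_\tau^{\Mod A}(A)$ to a dg algebra under $A$, so that $N$ carries a natural right $L_\tau^{\Mod A}(A)$-module (hence $A$-module) structure and all tensor/Hom adjunctions below are available.

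For $\tau$-locality of $N$, take any $X \in \tau$. By the extension-of-scalars adjunction along the dg algebra map $A \to L_\tau^{\Mod A}(A)$, we have
\[
\RHom_A\bigl(X, M \otimes_A^L L_\tau^{\Mod A}(A)\bigr) \simeq \RHom_{L_\tau^{\Mod A}(A)}\bigl(X \otimes_A^L L_\tau^{\Mod A}(A),\, M \otimes_A^L L_\tau^{\Mod A}(A)\bigr).
\]
Since $L_\tau^{\Mod A}(A)$ is $\tau$-local as an $A$-module (by construction) and is a dg algebra under $A$, Lemma \ref{lem_tensor} gives $X \otimes_A^L L_\tau^{\Mod A}(A) \simeq 0$, so the right-hand side vanishes. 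Hence $N$ is $\tau$-local.

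For the local $\tau$-equivalence property, let $L$ be any $\tau$-local $A$-module. Applying the tensor--Hom adjunction with $L_\tau^{\Mod A}(A)$ regarded as an $(A,A)$-bimodule, we obtain
\[
\RHom_A\bigl(M \otimes_A^L L_\tau^{\Mod A}(A),\, L\bigr) \;\simeq\; \RHom_A\bigl(M,\, \RHom_A(L_\tau^{\Mod A}(A), L)\bigr).
\]
Because $A \to L_\tau^{\Mod A}(A)$ is the $A$-module localization at $\tau$ and $L$ is $\tau$-local, the induced map $\RHom_A(L_\tau^{\Mod A}(A), L) \to \RHom_A(A, L) \simeq L$ is a quasi-isomorphism of right $A$-modules. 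Applying $\RHom_A(M, -)$ yields the desired quasi-isomorphism $\RHom_A(N, L) \simeq \RHom_A(M, L)$, and chasing naturality shows that it is precisely the map induced by $M \to N$.

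The main technical obstacle I anticipate is the bookkeeping of bimodule structures: one must check that the adjunction quasi-isomorphism and the identification of right $A$-actions (the one on $\RHom_A(L_\tau^{\Mod A}(A), L)$ coming from the left $A$-action on $L_\tau^{\Mod A}(A)$ versus the one on $L$) agree under the quasi-isomorphism $\RHom_A(L_\tau^{\Mod A}(A), L) \simeq L$, and that the resulting composite does match the map induced by $M \simeq M \otimes_A^L A \to M \otimes_A^L L_\tau^{\Mod A}(A)$. Once this naturality is in hand, the two bullet points of the localization universal property are established.
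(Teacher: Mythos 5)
The paper gives no proof of this proposition, instead citing \cite[Proposition 2.10]{Dwy06} directly; your proposal supplies the missing argument. Your direct verification of the universal property is correct: the $\tau$-locality of $N = M\otimes_A^L L_\tau^{\Mod A}(A)$ follows via the extension/restriction-of-scalars adjunction and Lemma~\ref{lem_tensor}, and the local $\tau$-equivalence property follows via tensor--Hom adjunction and the defining property of $L_\tau^{\Mod A}(A)$ applied to the $\tau$-local target $L$. The bimodule and naturality bookkeeping you flag is genuine but routine: the structure map $A\to L_\tau^{\Mod A}(A)$ is an $A$-bimodule map (even a dg algebra map, by Proposition~\ref{prop:algmod}), so the comparison $\RHom_A(L_\tau^{\Mod A}(A),L)\to\RHom_A(A,L)\simeq L$ is a quasi-isomorphism of right $A$-modules, and the composite with the adjunction isomorphism agrees with precomposition by $M\to N$ by naturality of the adjunction. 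This is essentially the argument in Dwyer's paper, so your proof fills in the citation rather than diverging from it.
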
\noproof
\begin{cor}\label{cor:algmodloc} The Quillen adjunction $\Mod A\to \Mod L^{\Mod A}_\tau A$ with left adjoint given
	by extension of scalars $M\mapsto M\otimes_A^LL^{\Mod A}_\tau A$
	and right adjoint given by restriction along $A\to L^{\Mod A}_\tau A$
	induces an equivalence between $\D(L^{\Mod A}_\tau A)$ and the full
	subcategory of $\D(A)$ of $\tau$-local modules.
\end{cor}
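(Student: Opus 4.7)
Write $B := L^{\Mod A}_\tau A$, viewed as a dg $A$-algebra via Proposition~\ref{prop:algmod}. The strategy is the standard one for recognizing an extension/restriction adjunction as an equivalence onto a reflective subcategory: verify separately that (a) restriction lands in $\tau$-local modules, (b) the unit is a quasi-isomorphism on $\tau$-local modules, and (c) the counit is a quasi-isomorphism on all of $\D(B)$.

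First, I would observe that for any $X \in \tau$, Proposition~\ref{prop:smashing} gives $X \otimes_A^L B \simeq L^{\Mod A}_\tau(X)$, and since $X$ sits in $\tau$, the localization map $X \to 0$ is a $\tau$-local equivalence, so $L^{\Mod A}_\tau(X) \simeq 0$. Hence for any $N \in \Mod B$,
\[
\RHom_A(X, N|_A) \;\simeq\; \RHom_B(X \otimes_A^L B,\, N) \;\simeq\; 0,
\]
showing $N|_A$ is $\tau$-local. This proves (a).

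For (b), if $M \in \D(A)$ is $\tau$-local then $M \to L^{\Mod A}_\tau(M)$ is a quasi-isomorphism (being a $\tau$-local equivalence between $\tau$-local modules); combined with Proposition~\ref{prop:smashing}, the unit $M \to M \otimes_A^L B$ is a quasi-isomorphism, as required. For (c), the key computation is $B \otimes_A^L B \simeq B$: since $B$ is $\tau$-local by construction, Proposition~\ref{prop:smashing} gives $B \otimes_A^L B \simeq L^{\Mod A}_\tau(B) \simeq B$, and one checks this quasi-isomorphism is precisely the counit evaluated on the free $B$-module $B$. Both endofunctors $N \mapsto (N|_A) \otimes_A^L B$ and $N \mapsto N$ on $\D(B)$ preserve shifts, exact triangles, and arbitrary coproducts, and $B$ generates $\D(B)$ as a localizing subcategory, so the class of $N$ on which the counit is a quasi-isomorphism is a localizing subcategory containing $B$, hence equals $\D(B)$.

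Combining (a)--(c), the restriction functor is fully faithful with essential image exactly the $\tau$-local modules (its essential image contains all $\tau$-local $M$ because $M \simeq (M \otimes_A^L B)|_A$ by (b), and is contained in the $\tau$-local subcategory by (a)), while the counit shows the induced functor $\D(B) \to \{\tau\text{-local modules}\}$ is an equivalence with quasi-inverse $M \mapsto M \otimes_A^L B$. I do not foresee a real obstacle; the subtlety, such as it is, lies in remembering to invoke Proposition~\ref{prop:smashing} at $M = B$ to obtain $B \otimes_A^L B \simeq B$, which plays the role that the homotopy-epimorphism property (Corollary~\ref{cor:homotepi2}) would play in the algebra-localization picture.
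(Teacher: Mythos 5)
Your proof is correct and reaches the same conclusion along essentially the same structural route (verify restriction lands in $\tau$-local modules, unit is a quasi-isomorphism on such modules, counit is a quasi-isomorphism on $\D(B)$), but two of the three sub-steps are argued differently from the paper. For (a), the paper observes that the restriction of any $B$-module to $A$ lies in the localizing subcategory $\Loc_A(B)$, which consists of $\tau$-local modules since $B$ itself is $\tau$-local and $\tau$-local modules form a localizing subcategory; you instead compute directly via the adjunction isomorphism $\RHom_A(X,N|_A)\simeq\RHom_B(X\otimes_A^L B,N)$ together with the vanishing $X\otimes_A^L B\simeq L_\tau^{\Mod A}(X)\simeq 0$ for $X\in\tau$. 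Both are fine; yours is more hands-on and a little more self-contained. For (c), the paper deduces the counit quasi-isomorphism directly from (a) and (b) via the triangle identity (since $N|_A$ is $\tau$-local, the unit $N|_A\to N|_A\otimes_A^L B$ is a quasi-isomorphism, and the composite with the counit restricted to $A$ is the identity); you instead prove the base case $B\otimes_A^L B\simeq B$ and then run a generating argument over the localizing subcategory of $\D(B)$ generated by $B$. Your route is valid but does a bit more work than necessary — the triangle-identity shortcut the paper uses avoids the generating argument entirely. Part (b) is identical in both. One small remark: in your step (c) the identification ``this quasi-isomorphism is precisely the counit'' really rests on the triangle identity (unit followed by counit is the identity), which is the same observation the paper uses to close the whole argument, so the generating argument ends up being a detour.
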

\begin{proof}
By Proposition \ref{prop:smashing} the functor $M\mapsto M\otimes_A^LL^{\Mod A}_\tau A$ is the $\tau$-localization of the $A$-module $M$; thus if $M$ is already $\tau$-local, then $M\to M\otimes_A^LL^{\Mod A}_\tau A$ is a quasi-isomorphism. Moreover, since $L^{\Mod A}_\tau A$, and $L^{\Mod A}_\tau A$ is $\tau$-local, any $L^{\Mod A}_\tau A$-module is also $\tau$-local as lying in in the localizing subcategory generated by any $L^{\Mod A}_\tau A$. Therefore, for any $L^{\Mod A}_\tau A$-module $M$ the map $M\otimes^L_AL^{\Mod A}_\tau A\to M$ is a quasi-isomorphism.
\end{proof}		
\begin{theorem}\label{thm:algmod} If $L_\tau^{\Mod A}(A)$ is a dg $A$-algebra  which is the localization of $A$
as an A-module then it is also the localization of $A$ as a dg algebra (so that $L_\tau^{\Mod A}(A)$ and $L_\tau(A)$ are isomorphic in the homotopy category of $A$-algebras).	
\end{theorem}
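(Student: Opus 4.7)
I set $B := L_\tau^{\Mod A}(A)$ equipped with its given dg $A$-algebra structure and aim to verify that $B$ satisfies characterization (2) of Proposition~\ref{prop:definitions}: for every $\tau$-local $A$-algebra $C$, there is a unique homotopy class of $A$-algebra maps $B\to C$ in $\operatorname{Ho}(A\downarrow\Alg)$. This will give $B\simeq L_\tau(A)$ in $\operatorname{Ho}(A\downarrow\Alg)$.

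Two preliminary observations. By Lemma~\ref{lem_tensor}, $B$ is $\tau$-local as a dg $A$-algebra, since it is $\tau$-local as an $A$-module by hypothesis. And Proposition~\ref{prop:smashing} applied with $M=B$ gives
\[
B\otimes^L_A B\simeq L_\tau^{\Mod A}(B)\simeq B,
\]
so the multiplication $\mu_B:B\otimes^L_A B\to B$ is a quasi-isomorphism; in other words, $A\to B$ is a homotopy epimorphism of $A$-bimodules.

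Given a $\tau$-local $A$-algebra $C$ with unit $u_C:A\to C$, I use Corollary~\ref{cor:algmodloc} to factor $u_C$ uniquely up to homotopy through $B$ as $A\to B\xrightarrow{\phi}C$, producing an $A$-module map $\phi$ under $A$. The key point is to show $\phi$ is multiplicative. Consider the two $A$-bimodule maps $\phi\circ\mu_B$ and $\mu_C\circ(\phi\otimes^L_A\phi)$ from $B\otimes^L_A B$ to $C$; both compose with the unit $A\to B\otimes^L_A B$ to give $u_C$, and through the quasi-isomorphism $\mu_B$ they correspond to $A$-module maps $B\to C$ factoring $u_C$, hence they agree by the module universal property of $B=L_\tau^{\Mod A}(A)$. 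This makes $\phi$ a dg algebra map. The same uniqueness shows any $A$-algebra map $\psi:B\to C$ restricts to an $A$-module factoring of $u_C$ and therefore $\psi\simeq\phi$, completing the verification of Proposition~\ref{prop:definitions}(2).

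The hardest step will be establishing multiplicativity of $\phi$, where two $A$-bimodule maps $B\otimes^L_A B\to C$ are compared through the homotopy epimorphism identification $\mu_B$ together with the module universal property of $B$. Care is needed to ensure that the ``under $A$'' uniqueness applies consistently in the bimodule setting versus the one-sided module setting used in Corollary~\ref{cor:algmodloc}.
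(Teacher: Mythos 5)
Your strategy — verifying condition (2) of Proposition~\ref{prop:definitions} for $B := L_\tau^{\Mod A}(A)$ directly — is a reasonable-looking alternative to the paper's approach, but the central multiplicativity step contains a genuine gap. After producing the unique homotopy class of $A$-\emph{module} maps $\phi\colon B\to C$ factoring $u_C$, you verify that the two composites $\phi\circ\mu_B$ and $\mu_C\circ(\phi\otimes^L_A\phi)$ agree up to homotopy as $A$-module maps $B\otimes^L_A B\to C$, and then conclude that ``this makes $\phi$ a dg algebra map.'' That conclusion does not follow. Knowing that a chain (or $A$-module) map is multiplicative \emph{up to a non-coherent homotopy} is strictly weaker than knowing that it is a morphism in $\operatorname{Ho}(A\downarrow\Alg)$ — the latter requires a zigzag of \emph{strict} dg algebra maps, or equivalently the full tower of coherent higher homotopies making $\phi$ an $A_\infty$-morphism of algebras. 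Nothing in your argument produces those higher homotopies, and in general a homotopy-multiplicative chain map need not be rectifiable to a dg algebra map. The same issue infects the uniqueness step: if $\psi\colon B\to C$ is a dg algebra map restricting to $u_C$, the module universal property only shows $\psi\simeq\phi$ \emph{as $A$-module maps}, not that $\psi$ and the (putative) algebra map $\phi$ agree in $\operatorname{Ho}(A\downarrow\Alg)$.

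The paper sidesteps this difficulty entirely. The existence of an $A$-\emph{algebra} map $B\to C$ for $\tau$-local $C$ is not obtained by rectifying a module map; rather, it is constructed functorially (the $\REnd$ description from Proposition~\ref{prop:algmod} and the diagram of dg functors in the proof of Theorem~\ref{thm:locexists} are the tools that keep the multiplicative structure manifest throughout), and this is what is deferred to \cite{BCL}. Once that existence is granted, the paper's argument is essentially the one you wanted at the end: one produces $f\colon B\to L_\tau(A)$ (hard direction) and $g\colon L_\tau(A)\to B$ (easy direction, by the universal property of $L_\tau(A)$), then checks that $g\circ f$ and $f\circ g$ are quasi-isomorphisms — for $g\circ f$ by the uniqueness of the module localization applied at the level of the underlying $A$-module map, which is harmless because a dg algebra map is a quasi-isomorphism iff its underlying chain map is. So the argument you would need to repair is precisely the one the paper outsources; to fix your write-up you would either have to invoke the \cite{BCL} construction of the algebra map or build it via the $\REnd$/functorial route used in Theorem~\ref{thm:locexists}.
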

\begin{proof}
This theorem is proved in \cite{BCL} in the special case when $\tau$ is generated by a set of $A$-modules
having the form of a cofiber of an endomorhism of $A$, however the proof continues to hold for arbitrary $\tau$. For the reader's convenience we will repeat the main points. First, we prove that for any $A$-algebra $C$ that is $\tau$-local as an $A$-module there is a (homotopy class of a) map of $A$-algebras $L_\tau^{\Mod A}(A)\to C$. This is Lemma 41.7 of \cite{BCL} and the proof applies verbatim. Since the algebra localization $L_\tau(A)$ is $\tau$-local, there is an $A$-algebra map $f:L_\tau^{\Mod A}(A)\to L_\tau(A)$. Next, by the universal property of $L_\tau(A)$ and since $L_\tau^{\Mod A}(A)$ is a $\tau$-local $A$-algebra, there is a map $g:L_\tau(A)\to L_\tau^{\Mod A}(A)$. The composition $f\circ g:L_\tau(A)\to L_\tau(A)$ is an endomorphism of $L_\tau(A)$ as an $A$-algebra and should, therefore, be homotopic to the identity. Similarly the composition $g\circ f$ is an endomorhism of $L_\tau^{\Mod A}(A)$ that is homotopic to the identity. Thus, $f$ and $g$ are mutually inverse quasi-isomorphisms of dg algebras $L_\tau^{\Mod A}(A)$ and $L_\tau(A)$.
\end{proof}	
\begin{defi}
	A map of dg algebras, $A\to L_\tau(A)$ is called a \emph{finite homological epimorphism} corresponding to a thick subcategory $\tau\in\Perf(A)$.
\end{defi}
\begin{rem}
	A dg algebra map $A\to B$ is called a \emph{homological epimorphism}, \cite{Pau09} if the map
	$B\otimes^L_AB\to B$, induced by the multiplication on $B$, is a quasi-isomorphism. Clearly the derived localization map 
	$A\to L_\tau A$ is a homological epimorphism (since the map $L_\tau(A)\otimes^L_AL_\tau(A)\to A$ is the $\tau$-localization of $L_\tau(A)$ but $L_\tau(A)$ is already $\tau$-local)  but not every homological epimorphism is of this form, owing to the failure of the so-called  \emph{telescope conjecture}, cf. \cite{Keller94a}.
\end{rem}
\begin{cor}\label{cor:thick}
	Let $A$ be a dg algebra. Then there is a 1-1 correspondence between:
	\begin{itemize}
	\item thick subcategories in $\Perf(A)$,
	\item equivalence classes of homotopy classes of finite homological epimorphisms from $A$ where two such  $A\to B$ and $A\to B^\prime$ are equivalent if there is a homotopy commutative diagram 
	\[
	\xymatrix{&A\ar[dl]\ar[dr]\\
		B\ar[rr]&&B^\prime}
	\]
	where $B\to B^\prime$ is a quasi-isomorphism.
	\end{itemize}
\end{cor}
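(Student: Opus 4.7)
The plan is to construct mutually inverse maps between the two sets. The forward map sends a thick subcategory $\tau \subseteq \Perf(A)$ to the equivalence class of $A \to L_\tau(A)$, which is well-defined thanks to Theorem \ref{thm:locexists} together with the homotopy uniqueness in the undercategory $A \downarrow \Alg$ from Lemma \ref{lem:undercategory}; surjectivity is immediate from the definition of a finite homological epimorphism. The backward map sends a finite homological epimorphism $A \to B$ to the thick subcategory
\[
\tau_B := \{X \in \Perf(A) : X \otimes_A^L B \simeq 0\},
\]
which by Lemma \ref{lem_tensor} coincides with $\{X \in \Perf(A) : \RHom_A(X,B) \simeq 0\}$; this is visibly thick and depends only on the quasi-isomorphism class of $B$.

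For the round trip starting from a finite homological epimorphism $A \to B \simeq L_{\tau_0}(A)$, I would first note that $\tau_0 \subseteq \tau_B$, since $L_{\tau_0}(A)$ is $\tau_0$-local. Two applications of the universal property of derived localization (Lemma \ref{lem:undercategory}) then yield an $A$-algebra equivalence $L_{\tau_B}(A) \simeq B$: the map $B \to L_{\tau_B}(A)$ exists because $L_{\tau_B}(A)$ is $\tau_B$-local and hence $\tau_0$-local, while the map $L_{\tau_B}(A) \to B$ exists because $B$ is $\tau_B$-local by the very construction of $\tau_B$. Both compositions are homotopic to identities by the uniqueness clause of the universal property, producing the claimed equivalence of $A$-algebras.

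The reverse round trip $\tau \mapsto L_\tau(A) \mapsto \tau_{L_\tau(A)}$ is where I expect the main obstacle. The inclusion $\tau \subseteq \tau_{L_\tau(A)}$ is immediate from the $\tau$-locality of $L_\tau(A)$. For the reverse inclusion one must show that any $X \in \Perf(A)$ with $X \otimes_A^L L_\tau(A) \simeq 0$ already lies in $\tau$. Combining Proposition \ref{prop:smashing} with Theorem \ref{thm:algmod} identifies $X \otimes_A^L L_\tau(A)$ with the module-level localization $L_\tau^{\Mod A}(X)$, while Corollary \ref{cor:algmodloc} identifies the $\tau$-local modules with $\D(L_\tau(A))$, so the vanishing $L_\tau^{\Mod A}(X) \simeq 0$ places $X$ in $\Loc(\tau)$. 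The essential remaining task is therefore the compactness-generation statement $\Loc(\tau) \cap \Perf(A) = \tau$: every perfect object in the localizing envelope of $\tau$ must already lie in $\tau$. This is the heart of the proof, and I would attempt it by leveraging the explicit construction of $L_\tau(A)$ from Theorem \ref{thm:locexists}---reduction to single-object generators $\langle X \rangle$ via a derived free product over $A$, where the corresponding statement $\thick{X} = \Loc(X) \cap \Perf(A)$ for a single compact $X$ is tractable---rather than by invoking a general telescope-type result.
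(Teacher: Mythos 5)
Your overall scheme and your first round trip are correct and match the paper's approach, and you rightly flag that the substantive content is the identity $\Ker\bigl(-\otimes_A^L L_\tau(A)\colon\Perf(A)\to\Perf(L_\tau(A))\bigr)=\tau$, which by Proposition~\ref{prop:smashing} reduces to showing that $\Loc(\tau)\cap\Perf(A)=\tau$. The paper's phrase ``the universal property of the derived localization implies'' indeed glosses over this step.

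Your proposed route to closing the gap is, however, misconceived. The needed statement is \emph{not} a telescope-conjecture-type result: the telescope conjecture is the (open, and in general false) assertion that every smashing localizing subcategory is generated by its compact objects, which is a different question. What is needed here is the Thomason--Neeman localization theorem, an unconditional theorem stating that for a compactly generated triangulated category $\mathcal{T}$ and a thick subcategory $\tau\subseteq\mathcal{T}^c$ of compacts, one has $\Loc(\tau)\cap\mathcal{T}^c=\tau$. The paper cites exactly this as \cite[Theorem 2.1]{Neema92} later on (to identify $\Perf(L_\rho(A))$ with the idempotent completion of $\Perf(A)/\Ker(\rho)$), and the same citation finishes your proof. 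Equivalently, by that result the functor $-\otimes_A^L L_\tau(A)$ factors as $\Perf(A)\twoheadrightarrow\Perf(A)/\tau\hookrightarrow\Perf(L_\tau(A))$ with the second arrow fully faithful, so the kernel is exactly $\tau$. As for the reduction to single compact generators: the one-object statement $\thick{X}=\Loc(X)\cap\Perf(A)$ is logically equivalent to the general one (every thick subcategory of $\Perf(A)$ is a directed union of subcategories $\thick{X}$, taking $X$ to range over finite direct sums of its members), and it already requires the full strength of Neeman's argument. The derived coproduct decomposition of $L_\tau(A)$ buys no tractability here; the right move is simply to invoke the theorem.
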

\begin{proof}
	Given a thick subcategory $\tau$ we construct a finite homological epimorphism $A\to L_\tau(A)$. Conversely, associate 
	to a finite homological epimorphism $A\to B$ the kernel of the functor $-\otimes_AB:\Perf(A)\to\Perf(B)$. The universal property of the derived localization $L_\tau(A)$ implies that these constructions are mutually inverse, as claimed.
\end{proof}
\subsection{Derived localization of ordinary rings} Let $A$ be an ordinary algebra and $\tau$ be a thick subcategory in $\Perf(A)$ generated
by a collection of objects represented by complexes of finitely generated projective $A$-modules of \emph{length two}. In other words, the localization map $\Perf(A)\to \Perf(A)/\tau$ inverts some maps between finitely generated projective $A$-modules. If these modules are free, such maps are represented by matrices with entries in $A$ and  derived localization map $A\to L_\tau A$ is the derived version of Cohn's matrix localization \cite{Cohn95}. The more general (still underived) version belongs to Schofield \cite{Sch86}.

\begin{defi}\label{defi:underived}
Let $S$ be a collection of maps between finitely generated projective $A$-modules. An algebra $B$ supplied with a map $A\to B$ is called \emph{$S$-inverting} if the functor $?\mapsto B\otimes_A?$ carries every morphism in $S$ into an isomorphism of $B$-modules. The \emph{localization} of $A$  at $S$ is the $S$-inverting algebra $A[S^{-1}]$ such that for any other $S$-inverting $S$-algebra $B$ the map $A\to B$ factors uniquely through  $A[S^{-1}]$.
\end{defi}
It is clear that $A[S^{-1}]$ is unique if it exists. The existence of $A[S^{-1}]$ is \cite[Theorem 4.1]{Sch86}. 
The following result establishes a precise relationship between the derived and underived notions (and simultaneously gives an independent proof of the existence of $A[S^{-1}]$).
\begin{theorem}
Let $A$, $S$ and $\tau$ be as above. Then $L_\tau A$ is a connective dg algebra i.e. $H_n(L_\tau(A))=0$ for $n<0$ and $H_0(L_\tau(A))=A[S^{-1}]$.
\end{theorem}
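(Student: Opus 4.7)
The plan is to prove the two assertions separately: connectivity via an explicit construction, and the identification of $H_0$ via matched universal properties.

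For connectivity, I would first appeal to the construction in the proof of Theorem \ref{thm:locexists}: $L_\tau(A)$ is the derived coproduct over $A$, in $A\downarrow\Alg$, of the single-generator localizations $L_{\langle\cone(s)\rangle}(A)$ as $s$ ranges over $S$. Since the derived coproduct of connective dg algebras over the connective base $A$ is connective (a standard consequence of the existence of connective cofibrant replacements, the relevant pushouts only introducing generators in non-negative degrees), it suffices to exhibit a connective model for each $L_{\langle\cone(s)\rangle}(A)$. For a fixed $s\colon P\to Q$ in $S$, let $B_s$ be the dg $A$-algebra obtained by freely adjoining a degree-0 generator $t_s\in\Hom_A(Q,P)$ together with degree-1 generators $h_s\in\Hom_A(Q,Q)$ and $g_s\in\Hom_A(P,P)$ satisfying $dh_s = st_s-\id_Q$ and $dg_s = t_s s-\id_P$. (When $P,Q$ are only projective, this is made precise by choosing complements $P',Q'$ with $P\oplus P'$ and $Q\oplus Q'$ free and realizing $t_s, h_s, g_s$ as matrices of new $A$-algebra generators.) The algebra $B_s$ is concentrated in non-negative degrees, and by construction $s\otimes_A B_s$ is a quasi-isomorphism with explicit chain-homotopy inverse $t_s$, so by Lemma \ref{lem_tensor} the algebra $B_s$ is $\langle\cone(s)\rangle$-local. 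The universal property follows from a standard obstruction-theoretic argument: for any $\langle\cone(s)\rangle$-local $A$-algebra $C$, the map $s\otimes^L_A C$ is a quasi-isomorphism of perfect $C$-modules, so a chain-homotopy inverse and suitable homotopies can be chosen in $C$, giving an $A$-algebra map $B_s\to C$, unique up to homotopy because the space of such data is contractible. Hence $B_s\simeq L_{\langle\cone(s)\rangle}(A)$, and $L_\tau(A)$ is connective.

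Given connectivity, computing $H_0$ is straightforward. The $\tau$-locality of $L_\tau(A)$ combined with Lemma \ref{lem_tensor} and the projectivity of $P, Q$ gives that the map $s\otimes_A L_\tau(A) \colon P\otimes_A L_\tau(A) \to Q\otimes_A L_\tau(A)$ is a quasi-isomorphism for every $s\in S$ (the derived and underived tensors coincide because $P, Q$ are projective). Taking $H_0$, and using that $P\otimes_A -$ and $Q\otimes_A -$ commute with taking homology, we see that $s\otimes_A H_0(L_\tau(A))$ is an isomorphism, so $H_0(L_\tau(A))$ is an $S$-inverting $A$-algebra. By the universal property in Definition \ref{defi:underived} there is a canonical map $A[S^{-1}]\to H_0(L_\tau(A))$. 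Conversely, $A[S^{-1}]$ viewed as a dg algebra in degree 0 is itself $\tau$-local, since for $s\colon P\to Q$ in $S$ we have $\cone(s)\otimes^L_A A[S^{-1}] \simeq \cone(s\otimes_A A[S^{-1}])$ by projectivity, and this cone is acyclic as $s\otimes_A A[S^{-1}]$ is an isomorphism. Definition \ref{def:maindef} therefore yields an $A$-algebra map $L_\tau(A)\to A[S^{-1}]$, whose $H_0$ furnishes the sought-after inverse; the two maps compose to the identity on either side by the uniqueness clauses in the respective universal properties.

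The main obstacle is the connectivity step, and within it, the rigorous construction and verification of $B_s$. The matrix/free-module bookkeeping needed to accommodate projective (non-free) $P, Q$ is mechanical but tedious, and one must consistently work in the connective model structure when forming the derived coproduct $\coprod^L_{A, s\in S} B_s$ to ensure the output remains connective rather than merely bounded below.
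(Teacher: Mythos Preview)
Your computation of $H_0(L_\tau(A))$, assuming connectivity, is correct and clean. The gap is in the connectivity step: the explicit algebra $B_s$ is \emph{not} a model for $L_{\langle\cone(s)\rangle}(A)$. Freely adjoining a two-sided inverse together with \emph{independent} left and right homotopies overshoots, because the redundancy between $h_s$ and $g_s$ creates spurious homology. Take $A=\ground$, $P=Q=A$, $s=\id_A$, so that $\cone(s)\simeq 0$, $\tau=0$, and $L_\tau(A)=\ground$. Your $B_s$ is $\ground\langle t,h,g\rangle$ with $dh=dg=t-1$; after the substitution $g'=g-h$ this becomes the free product $\ground\langle t,h:dh=t-1\rangle *_\ground \ground\langle g'\rangle$ with $dg'=0$. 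The first factor is quasi-isomorphic to $\ground$ (the cell $h$ kills $t-1$), so $B_s\simeq \ground\langle g'\rangle$, the free algebra on a single degree-$1$ cycle, which has $H_n=\ground$ for every $n\geq 0$. Thus $B_s\not\simeq L_\tau(A)$. The asserted contractibility of ``the space of such data'' fails for exactly this reason: once $(t,h)$ is chosen, the remaining choice of $g$ is a torsor over degree-$1$ \emph{cycles} in $\End_C(P\otimes_A C)$, and $\pi_0$ of that torsor is $H_1$, not a point.

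The paper avoids any explicit construction: it identifies the algebra localization with the module localization $L_\tau^{\Mod A}(A)$ via Theorem~\ref{thm:algmod} and then invokes \cite[Propositions~3.1 and~3.2]{Dwy06}, where connectivity and the identification of $H_0$ are established on the module side. Your strategy can in principle be salvaged by adding higher coherence cells to $B_s$ (enforcing compatibility between $h_s$ and $g_s$, then between those, and so on), but carrying this out amounts to reproving Dwyer's result and is not the one-line ``standard obstruction-theoretic argument'' you invoke.
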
 
\begin{proof}
By Theorem  \ref{thm:algmod} we can identify $L_\tau(A)$ with $L_\tau^{\Mod A}(A)$. With this, the desired result is the combination of [Proposition 3.1]\cite{Dwy06} and [Proposition 3.2]\cite{Dwy06} (note that op.cit. works with \emph{left} modules but this difference is, of course, unimportant).
\end{proof}
It is natural to ask when Cohn-Schofield localization coincides with derived localization. The following result answers this question.
\begin{prop}\label{prop:stablyflat}
Let $A$, $S$ and $\tau$ be as above. Then the canonical map $L_\tau A\to A[S^{-1}]$ is a quasi-isomorphism if an only if $A[S^{-1}]$ is \emph{stably flat} over $A$ i.e.  $\operatorname{Tor}^A_n(A[S^{-1}], A[S^{-1}])=0$ for $n>1$.
\end{prop}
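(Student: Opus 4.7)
The plan is to reduce this assertion to the module-level derived localization via Theorem \ref{thm:algmod}, and then to exploit the classical Geigle-Lenzing characterization of homological epimorphisms of rings.

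First I would observe that $A[S^{-1}]$ is $\tau$-local as an $A$-module: every generator $X\in\tau$ is the cone of some map $s\colon P\to Q$ in $S$, and since $A[S^{-1}]\otimes_A s$ is an isomorphism, $A[S^{-1}]\otimes_A^L X\simeq 0$, so Lemma \ref{lem_tensor} applies. The universal property of $L_\tau(A)$ thus produces a canonical map $\phi\colon L_\tau(A)\to A[S^{-1}]$ of $A$-algebras, and by the previous theorem $\phi$ induces the identity on $H_0$. Since $L_\tau(A)$ is connective, $\phi$ is a quasi-isomorphism if and only if $H_n(L_\tau(A))=0$ for all $n\geq 1$.

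The forward direction is immediate: if $\phi$ is a quasi-isomorphism, then the homological epimorphism property $L_\tau(A)\otimes_A^L L_\tau(A)\simeq L_\tau(A)$ (noted in the remark after Corollary \ref{cor:thick}) transports to $A[S^{-1}]\otimes_A^L A[S^{-1}]\simeq A[S^{-1}]$, forcing $\operatorname{Tor}^A_n(A[S^{-1}],A[S^{-1}])=0$ for all $n\geq 1$.

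For the converse, assume stable flatness, so that $A\to A[S^{-1}]$ is a homological epimorphism of rings. By Theorem \ref{thm:algmod} it is enough to show that $A\to A[S^{-1}]$ is the module localization $L_\tau^{\Mod A}(A)$, and since $A[S^{-1}]$ is already $\tau$-local this reduces to verifying that $\RHom_A(A[S^{-1}],M)\simeq M$ for every $\tau$-local $A$-module $M$. Under a homological epimorphism, the restriction $\D(A[S^{-1}])\hookrightarrow\D(A)$ is fully faithful with essential image a localizing subcategory of $\D(A)$; one checks that this essential image coincides with the $\tau$-local $A$-modules, the nontrivial inclusion coming from the fact that $\cone(A\to A[S^{-1}])\in\Loc(\tau)$, which follows from the explicit construction of $A[S^{-1}]$ by successive matrix inversions together with stable flatness. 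The usual adjunction then yields $\RHom_A(A[S^{-1}],M)\simeq\RHom_{A[S^{-1}]}(A[S^{-1}],M)\simeq M$, as required. The main obstacle is precisely this identification of the essential image of $\D(A[S^{-1}])\hookrightarrow\D(A)$ with the $\tau$-local subcategory under stable flatness; alternatively, the entire converse can be packaged by appealing to \cite[Propositions 3.1--3.2]{Dwy06}, which treat the module-level derived localization and its comparison with the Cohn-Schofield localization under the stable flatness hypothesis.
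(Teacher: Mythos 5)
Your overall strategy matches the paper's: identify $L_\tau(A)$ with $L_\tau^{\Mod A}(A)$ via Theorem~\ref{thm:algmod} and then reduce to Dwyer--Greenlees--Iyengar. The paper itself simply cites \cite[Proposition 3.3]{Dwy06} for the whole equivalence, so you are not taking a fundamentally different route; your fallback at the end of the proof is essentially the paper's proof. Your forward direction is a pleasant self-contained variant: rather than citing Dwyer you deduce vanishing of higher $\operatorname{Tor}$ directly from the homological-epimorphism property of the derived localization map, which is established earlier in the paper and transports across the assumed quasi-isomorphism.

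Two issues to flag. First, your citation at the end is off: \cite[Propositions 3.1--3.2]{Dwy06} are used in the paper to establish connectivity of $L_\tau(A)$ and the identification $H_0(L_\tau(A))\cong A[S^{-1}]$, whereas the stable-flatness criterion is \cite[Proposition 3.3]{Dwy06}. Second, the detailed sketch you give for the converse has a genuine gap precisely where you say it does: the assertion $\cone(A\to A[S^{-1}])\in\Loc(\tau)$ is the whole content of the proposition, and ``follows from the explicit construction of $A[S^{-1}]$ by successive matrix inversions together with stable flatness'' is not an argument -- the Cohn localization is built by generators and relations, and there is no a priori finite or transfinite filtration of $A[S^{-1}]$ by cones of maps in $S$ that stable flatness would straightforwardly provide. (There is also a smaller implicit appeal, which you do not spell out, that stable flatness plus the ring-epimorphism property of Cohn localization already gives $\operatorname{Tor}^A_1(A[S^{-1}],A[S^{-1}])=0$; this is a theorem of Schofield and is not automatic for general ring epimorphisms.) Since you explicitly acknowledge the obstacle and point to Dwyer's paper as the place it is resolved, the proposal is salvageable; just correct the reference to Proposition~3.3 and either cite it outright or supply the actual argument rather than gesturing at ``successive matrix inversions.''
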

\begin{proof}
After the identification of $L_\tau(A)$ and $L_\tau^{\Mod A}(A)$, this is proved in [Proposition 3.3]\cite{Dwy06}.
\end{proof}
\begin{cor}\label{cor:hereditary}
Let $A$ be right-hereditary, i.e. having right global dimension $\leq 1$. Then the derived localization $L_\tau$ at any thick subcategory
$\tau$ of $\Perf(A)$ is (quasi-isomorphic to) its Cohn-Schofield localization.
\end{cor}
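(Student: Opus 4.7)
The plan is to reduce the statement to Proposition \ref{prop:stablyflat} by verifying stable flatness, which becomes trivial under the hereditary hypothesis.

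First, I would establish the preliminary structural reduction: over a right-hereditary ring $A$, any submodule of a projective is projective, so in any bounded complex of finitely generated projective $A$-modules each differential has projective kernel and image, both of which split off as direct summands of the neighbouring terms. Iterating this splitting shows that every object of $\Perf(A)$ is quasi-isomorphic to a finite direct sum of shifts of length-two complexes $P \to Q$ (with $P,Q$ finitely generated projective) and of single finitely generated projective modules (which may themselves be viewed as length-two complexes $0 \to P$). In particular, every thick subcategory $\tau \subseteq \Perf(A)$ is generated by the two-term complexes it contains, so there is an associated set $S$ of morphisms between finitely generated projective $A$-modules in the sense of Definition \ref{defi:underived}. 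The Cohn-Schofield localization $A[S^{-1}]$ is therefore defined and receives a canonical $A$-algebra map from $L_\tau(A)$.

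The remaining step is to invoke Proposition \ref{prop:stablyflat}, which asserts that the canonical map $L_\tau(A) \to A[S^{-1}]$ is a quasi-isomorphism if and only if $A[S^{-1}]$ is stably flat over $A$, i.e. $\operatorname{Tor}^A_n(A[S^{-1}], A[S^{-1}]) = 0$ for all $n > 1$. But this vanishing is automatic: since $A$ has right global dimension at most $1$, the functor $\operatorname{Tor}^A_n(-,-)$ vanishes identically on the category of $A$-modules for every $n \geq 2$, regardless of the arguments. Hence the stable flatness hypothesis is satisfied and the conclusion follows.

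There is no substantive obstacle here. The only non-formal ingredient is the structural observation that perfect complexes over a hereditary ring split into length-two pieces, which is classical; once this is in hand, both the definition of $A[S^{-1}]$ and the stable-flatness verification are immediate from the hypotheses. The entire argument is therefore a short application of Proposition \ref{prop:stablyflat}.
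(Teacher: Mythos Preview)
Your argument is correct and follows the same route as the paper: invoke Proposition~\ref{prop:stablyflat} and observe that $\operatorname{Tor}^A_n$ vanishes for $n\geq 2$ over a hereditary ring. The paper's proof is the single sentence ``This follows at once from Proposition~\ref{prop:stablyflat} since higher Tor functors over a hereditary algebra vanish.'' Your version is more careful in one respect: you explicitly verify that over a right-hereditary ring every thick subcategory of $\Perf(A)$ is generated by two-term complexes, so that the Cohn--Schofield localization $A[S^{-1}]$ is defined and Proposition~\ref{prop:stablyflat} (which is stated only for such $\tau$) actually applies; the paper leaves this reduction implicit.
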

\begin{proof}
This follows at once from Proposition \ref{prop:stablyflat} since higher Tor functors over a hereditary algebra vanish.
\end{proof}	
\subsection{Commutative rings}	Assume that $A$ is an ordinary commutative ring. Derived localizations corresponding to two-term complexes $A\to A$ given by multiplications by elements of $A$ are ordinary localizations of $A$ as a commutative ring at a multiplicatively closed subset. However for more general thick subcategories $\tau$, it is possible for $L_\tau(A)$ to be a genuine dg algebra. Recall that the well-known Hopkins-Neeman-Thomason theorem \cite{Thomason97} gives the classification of all thick subcategories in $\Perf(A)$: these correspond bijectively to the unions of closed subsets in $\Spec(A)$ having quasi-compact complement (such subsets are sometimes called \emph{Hochster open} sets). Specifically, to any such subset $S$ one associates the thick subcategory of $\Perf(A)$ consisting of perfect $A$-modules having support on $S$. Then one has the following result, in which we $\mathcal A$ stands for the structure sheaf on $\Spec A$ and we identify $\D(A)$ with the category of complexes of quasi-coherent sheaves of $\mathcal A$-modules.
\begin{prop}
	Let $A$ be a Noetherian commutative ring and $\tau_S$ be the thick subcategory of $dg$ $A$-modules with support on a closed subset $S$ of $\Spec A$ and denote by $i:\Spec (A)\setminus S\hookrightarrow\Spec(A)$ the corresponding inclusion map. Then $L_\tau(A)\cong Ri_*i^*(\mathcal A)$ as objects of $\D(A)$.
\end{prop}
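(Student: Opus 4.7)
The plan is to combine Theorem \ref{thm:algmod} with the standard geometric description of Bousfield localization on $\D(A)$ for a Noetherian commutative ring. By Theorem \ref{thm:algmod}, the derived algebra localization $L_{\tau_S}(A)$ agrees as an object of $\D(A)$ with the module localization $L_{\tau_S}^{\Mod A}(A)$, and Corollary \ref{cor:algmodloc} characterizes the latter as the unit $A \to L_{\tau_S}^{\Mod A}(A)$ of the Bousfield localization on $\D(A)$ whose kernel is the localizing subcategory $\Loc(\tau_S)$. So the task reduces to identifying this Bousfield localization geometrically.

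First I would identify $\Loc(\tau_S)$ with the full subcategory $\D_S(A) \subseteq \D(A)$ of complexes whose cohomology is set-theoretically supported on $S = V(I)$. Since $\tau_S$ is generated by perfect complexes supported on $S$, clearly $\Loc(\tau_S) \subseteq \D_S(A)$. For the reverse inclusion, pick a finite set of generators $f_1,\ldots,f_n$ of $I$; the Koszul complex $K(f_1,\ldots,f_n)$ lies in $\tau_S$, and in the Noetherian case every finitely generated $A$-module $M$ with $\mathrm{Supp}(M) \subseteq S$ is annihilated by some power $I^m$ and can therefore be built as an iterated extension of subquotients killed by the $f_i$, hence lies in $\thick{K(f_1,\ldots,f_n)} \subseteq \tau_S$. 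Passing to arbitrary modules via filtered colimits then gives $\D_S(A) \subseteq \Loc(\tau_S)$.

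Next I would invoke the classical fact, for the quasi-compact open immersion $i\colon U = \Spec(A)\setminus S \hookrightarrow \Spec(A)$, that $i^*\colon \D(A) \to \D_{qc}(U)$ admits a fully faithful right adjoint $Ri_*$. The $\tau_S$-local objects of $\D(A)$ are precisely those $M$ with $\RHom_A(X,M) \simeq 0$ for all $X \in \tau_S$, which (using the Koszul generators above) amounts to $\RHom_A(A/I^m,M) \simeq 0$ for all $m$; by the standard local cohomology/Čech characterization this is exactly the essential image of $Ri_*$. Hence the counit-unit pair of $(i^*, Ri_*)$ realizes the Bousfield localization of $\D(A)$ with kernel $\D_S(A) = \Loc(\tau_S)$, so the localization map of $A$ is the unit $A = \mathcal{A} \to Ri_*i^*\mathcal{A}$, proving $L_{\tau_S}(A) \simeq Ri_*i^*\mathcal{A}$ in $\D(A)$.

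The main obstacle I anticipate is the rigorous identification $\Loc(\tau_S) = \D_S(A)$ and, relatedly, the characterization of $\tau_S$-local complexes as $Ri_*$-images. Both are well known in the Noetherian setting (going back to Neeman's reformulation of the Hopkins--Thomason classification and to standard facts about local cohomology), so one option is simply to cite them; alternatively one can give an explicit proof by writing $Ri_*i^*\mathcal{A}$ as the stable Čech complex $\check{C}(f_1,\ldots,f_n;A)$ on a set of generators of $I$ and verifying directly that (i) it is $\tau_S$-local (each $K(f_i)$ tensored with it is acyclic) and (ii) the cofiber of $A \to \check{C}(f_1,\ldots,f_n;A)$ lies in $\Loc(\tau_S)$ (it is the stable Koszul complex, which is a filtered colimit of Koszul complexes on powers of the $f_i$), thereby exhibiting the map directly as a $\tau_S$-localization.
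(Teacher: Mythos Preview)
Your proposal is correct and follows essentially the same route as the paper: reduce via Theorem \ref{thm:algmod} to identifying the module localization $L_{\tau_S}^{\Mod A}(A)$, and then use the $(i^*,Ri_*)$ adjunction to exhibit $\mathcal A\to Ri_*i^*\mathcal A$ as that Bousfield localization. The paper's argument is terser --- it verifies directly that the homotopy fiber lies in $\D_S(A)$ and that $Ri_*i^*\mathcal A$ is $\D_S(A)$-local, leaving the identification $\Loc(\tau_S)=\D_S(A)$ implicit (via the Hopkins--Neeman theorem mentioned just before the proposition) --- whereas you make this step explicit using Koszul complexes and offer the stable \v Cech description as a concrete alternative; this is a useful elaboration rather than a different strategy.
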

\begin{proof}
Let $\D_S(A)$ be the subcategory of $\D(A)$ consisting of complexes of quasi-coherent sheaves on $\Spec(A)$ supported on $S$. It is clear that the (homotopy) fiber of the natural map $\mathcal A\to Ri_*i^*\mathcal A$ is supported on $S$.  On the other hand, the sheaf $Ri_*i^*\mathcal A$ is $\tau$-local; indeed for any $\mathcal A$-module sheaf $\mathcal F$ supported on $S$ we have $i^*\mathcal F\simeq 0$ and so  $\RHom(Ri_*i^*\mathcal A,\mathcal F)\simeq \RHom(i^*(\mathcal A),i^*\mathcal F)\simeq 0$. 

 It follows that $Ri_*i^*\mathcal A$ is the localization of $\mathcal A$ with respect to the localizing subcategory $\D_S(A)$. Now the desired statement follows from Theorem \ref{thm:algmod}.
\end{proof}
\begin{rem}
	The assumptions that $A$ is Noetherian and $S$ is closed are imposed in order for the locally ringed space $\Spec (A)\setminus S$ to be a scheme and for the direct image functor $Ri_*$ to land in the derived category of quasi-coherent sheaves on $\Spec(A)$\footnote{We are grateful to the anonymous referee for pointing out this subtlety to us.}.
\end{rem}
\begin{example}
	Let $A:=\ground[x,y]$ be the polynomial algebra in two variables and $\tau$ be the thick subcategory generated by the
	$1$-dimensional $A$-module $\ground[x,y]/(x,y)$. It is easy to see (e.g. using the Koszul complex) that $L_\tau(A)$ is a dg algebra whose homology is concentrated in degrees $0$ and $-1$.
\end{example}	
It is natural to ask whether for a commutative dg algebra $A$ its derived localization $L_\tau(A)$ is also such. As usual, it is better to consider $E_\infty$ algebras rather than strictly commutative ones. Then a positive answer to this question could be derived by combining the results of \cite{EKMM} and \cite{Man03}. Since $E_\infty$ algebras are rather tangential to the main themes of the present paper, we will only sketch the proof and omit all topological and operadic prerequisites, referring to the two above mentioned sources for details.
\begin{prop}
	If $A$ is a (dg) $E_\infty$ algebra, then $L_\tau(A)$ is also a dg $E_\infty$ algebras and the localization $A\to L_\tau(A)$ can be constructed as a map of dg $E_\infty$ algebras.
\end{prop}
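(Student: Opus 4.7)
The plan is to perform the localization entirely within the model category of $E_\infty$ dg $\ground$-algebras and then identify the result with the associative derived localization $L_\tau(A)$ constructed earlier. Assume $A$ is a cofibrant $E_\infty$ algebra, and replace $\tau$ by its image under the functor from $A$-modules to $E_\infty$-$A$-modules. The role of the cited references is as follows: Mandell's rectification theorem identifies dg $E_\infty$-algebras over $\ground$ with commutative $H\ground$-algebra spectra, and the EKMM framework provides a symmetric monoidal model structure on the category of modules over any such commutative ring object, with derived tensor product $\otimes_A^L$. Transferring along this Quillen equivalence yields a symmetric monoidal model category of $A$-modules in the $E_\infty$ setting.

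Next, inside this symmetric monoidal category of $A$-modules one forms the Bousfield localization at the class of $\tau$-acyclic modules, producing a localization functor $L_\tau^{\Mod A}$ together with the local object $L_\tau^{\Mod A}(A)$. The key input is Proposition \ref{prop:smashing}, which shows that this localization is \emph{smashing}:
\[
L_\tau^{\Mod A}(M)\simeq M\otimes_A^L L_\tau^{\Mod A}(A)
\]
for every $A$-module $M$. In the EKMM/Mandell framework, a smashing Bousfield localization of a commutative ring object is itself a commutative ring object, and the localization map is naturally a map of commutative ring objects; equivalently, the endofunctor $-\otimes_A^L L_\tau^{\Mod A}(A)$ is lax symmetric monoidal, so its value on the unit $A$ inherits a canonical $E_\infty$-$A$-algebra structure compatible with the map $A\to L_\tau^{\Mod A}(A)$.

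Finally, to pass from $E_\infty$ to the statement about the associative derived localization, invoke Theorem \ref{thm:algmod}: the object $L_\tau^{\Mod A}(A)$, viewed as an associative $A$-algebra after forgetting down the operadic tower $E_\infty\to E_1$, satisfies the universal property defining $L_\tau(A)$. Hence $L_\tau(A)$ is represented, up to quasi-isomorphism of associative algebras, by an $E_\infty$ algebra, and the localization map $A\to L_\tau(A)$ lifts to an $E_\infty$ map.

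The main obstacle lies in the second step: one must verify that the Mandell/EKMM equivalence between dg $E_\infty$-algebras and commutative $H\ground$-algebra spectra is compatible with Bousfield localization at the class of maps corresponding to $\tau$, so that the general theorem ``smashing localization preserves commutativity'' transports correctly between the two settings. The remaining verifications are formal; the bulk of the work is in setting up the symmetric monoidal model-categorical framework that lets one apply the EKMM preservation result in the dg context, which is precisely where we appeal to \cite{EKMM} and \cite{Man03} rather than carrying out the operadic details here.
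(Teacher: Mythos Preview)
Your proposal is correct and follows essentially the same route as the paper: transfer the question to commutative $H\ground$-algebra spectra via Mandell's equivalence, invoke the EKMM result that Bousfield localization of a commutative $S$-algebra is again a commutative $S$-algebra (with the localization map being one of commutative $S$-algebras), and then identify the module localization $L_\tau^{\Mod A}(A)$ with the associative derived localization $L_\tau(A)$ via Theorem~\ref{thm:algmod}. The paper's version is terser and does not explicitly foreground the smashing property, but the logical skeleton is the same.
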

\begin{proof}
Let $H\ground$ be the Eilenberg-MacLane spectrum corresponding to the ring $\ground$; it is known to be a commutative $S$-algebra. According to \cite[Theorem 7.11]{Man03}, there is a functor $\Xi:B\mapsto \Xi(B)$ from the homotopy category of commutative $H\ground$ algebras to the homotopy category of dg  $E_\infty$ algebras and another one $\mathbf{R}:M\mapsto \mathbf{R}(M)$ from the homotopy category of $\Xi(B)$-modules to the homotopy category of $B$-modules. Moreover, both $\Xi$ and $\mathbf R$ are equivalences. Additionally, the $B$-module $\mathbf{R}(\Xi(B))$ is weakly equivalent to $B$ for any $S$-algebra $B$; this property is not stated explicitly in op.cit. but follows readily from the construction. 

It suffices to show that $L_\tau^{\Mod A}$ is quasi-isomorphic to a dg $E_\infty$ algebra and, by the setup described above this is equivalent to showing that the Bousfield localization of $(\Xi)^{-1}(A)$ is a commutative $S$-algebra (and the map into it from $(\Xi)^{-1}(A)$ is that of commutative $S$-algebras). But this is proved in \cite[Chapter 8, Theorem 2.2]{EKMM}.
\end{proof}
\subsection{Small example}\label{smallexample} The following is the smallest example of a finite-dimensional algebra possessing nontrivial derived localization. Let $A$ be the algebra with a basis $e_1,e_2,\alpha_1,\alpha_2$ so that $e_1^2=e_1,e_2^2=e_2, \alpha_1e_1=\alpha_1=e_2\alpha_1, e_1\alpha_2=\alpha_2=\alpha_2e_2$ and the rest of the products are zero. The algebra $A$ is the path algebra of a quiver with two vertices and two arrows between them running in the opposite directions, subject to the relations above. The elements $\alpha_i, i=1,2$ correspond to the two arrows and the elements $e_i,i=1,2$ correspond to the trivial loops at each vertex. 

Set $B:=\REnd_A(\ground,\ground)$ where $A$ acts on $\ground$ via $e_2=\alpha_i=0, i=1,2$. Also set 
$B^\prime:=\REnd_A(\ground\times\ground, \ground\times \ground)$ where  $A$ acts on $\ground\times \ground$ via $\alpha_i=0, i=1,2$. It is 
immediate that $B^\prime$ is (quasi-isomorphic to) the path algebra of the same quiver with arrows marked by the generators
$\alpha^\prime_i, i=1,2$ with $|\alpha^\prime_i|=-1$ and no relations. It follows that $B\simeq e_1B^\prime e_1$ is the polynomial algebra
on one generator $\beta=\alpha_2^\prime\alpha_1^\prime$ with $|\beta|=-2$. It further follows that $\REnd_B(\ground,\ground)\simeq \ground[\alpha]/(\alpha^2)$, the exterior algebra on one generator $\alpha$ with $|\alpha|=1$.

On the other hand, it is clear that $L_{e_1}A$ is the localization of $A$ as an $A$-module with respect to the functor $\RHom_A(-, e_1Ae_1)\simeq\RHom_A(-,\ground)$ and the latter localization is (quasi-isomorphic to) $\REnd_B(\ground,\ground)$, cf. \cite[Theorem 2.1, Proposition 4.8]{Dwyer02} for this kind of statement. 

All told, we conclude that $L_{e_1}A\simeq \ground[\alpha]/(\alpha^2)$. The nonderived localization $A[e_1^{-1}]$  of $A$ is, of course, the ground ring $\ground$.

Next, consider the projective $A$ modules $e_1A$ and $e_2A$; then the left multiplication with $\alpha_2$ determines a map $e_1A\to e_2A$ which we will regard as an object in $\Perf(A)$. Denote by $\tau$ the thick subcategory generated by this object. Then $L_\tau A$ is isomorphic to $M_2(\ground)$, the $2\times 2$ matrix algebra over $\ground$ and since $M_2(\ground)$ is flat over $A$, we conclude that no higher derived terms are present, i.e. $L_{\tau}(A)\simeq M_2(\ground)$. Similar conclusions can be made regarding derived localizations of $A$ at $e_2$ and at the object $e_2A\to e_1A$ determined by the left multiplication by $\alpha_1$. 
\subsection{Group completion} Let $M$ be a discrete monoid and $BM$ be its classifying space. Then the based loop space $\Omega BM$ is the so-called group completion of $M$ and according to McDuff's theorem, any topological space $X$ is weakly equivalent to some $\Omega BM$ \cite{McD79}. The chain algebra $C_*(\Omega BM)$ is quasi-isomorphic to the derived localization of the monoid algebra $\ground[M]$ at all monoid elements by \cite[Theorem 10.3]{BCL}. It follows that the derived localization of an ordinary (ungraded) algebra, such as $\ground[M]$ can be a fairly arbitrary dg algebra. Here is a particularly striking example due to Fiedorowicz \cite{Fie84}.
\begin{example}\label{ex:fiedorowicz}
	Let $M$ be the monoid with  five elements $\{1,x_{ij},i,j=1,2\}$ which multiply according to the rule $x_{ij}x_{kl}=x_{il}$. It is easy to see that the non-derived group completion of $M$ is trivial and that $H_*(M,\ground):=\operatorname{Tor}_*^{\ground[M]}(\ground,\ground)$ coincides with the homology of $S^2$, the two-dimensional sphere. It follows that $BM$ is weakly equivalent to $S^2$ and, therefore $\Omega BM$ is weakly equivalent to $\Omega S^2$. The homology of $\Omega S^2$ is $\ground[x]$ with $|x|=1$, and this dg algebra is clearly formal. Thus, the derived localization of $\ground[M]$ is (quasi-isomorphic to) $\ground[x]$. Note that the localization map $\ground[M]\to C_*(\Omega BM)\simeq \ground[x]$ is highly nontrivial in the homotopy category of dg algebras (e.g. it induces a Verdier quotient on the level of derived categories) and it is \emph{not} the one that factors through $\ground$.
\end{example}
\subsection{Rank functions for derived localization algebras} Let $A$ be a dg algebra and $\rho$ be a rank function on $\Perf(A)$. Consider the map
$A\to L_{\rho}(A)$ from $A$ into its derived localization at $\Ker(\rho)$. Then the following result holds.
\begin{theorem}
The rank function $\rho$ descends to an object-faithful rank function $\overline{\rho}$ on $\Perf(L_\rho(A))$ (so that the pullback of $\overline{\rho}$ under the direct image functor $\Perf(A)\to\Perf(L_\rho(A))$ is $\rho$). 
\end{theorem}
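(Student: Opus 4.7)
Set $\tau := \Ker(\rho)$. The strategy is to descend $\rho$ first to the Verdier quotient $\Perf(A)/\tau$, then identify $\Perf(L_\rho(A))$ with the idempotent completion of this quotient, and finally extend the descended rank function via Lemma~\ref{lem:idempotentcompletion}.

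By Corollary~\ref{cor:quotient}, $\tau$ is a thick subcategory of $\Perf(A)$, and $\rho$ descends to an object-faithful rank function $\tilde{\rho}$ on $\Perf(A)/\tau$ whose pullback along the quotient functor $j\colon \Perf(A) \to \Perf(A)/\tau$ recovers $\rho$. Next, Corollary~\ref{cor:algmodloc} together with Theorem~\ref{thm:algmod} identifies $\D(L_\rho(A))$ with the full subcategory of $\tau$-local objects in $\D(A)$, under which the direct image functor $\pi := -\otimes^L_A L_\rho(A)$ corresponds to the Bousfield localization. By the Thomason--Neeman theorem on compact objects of Bousfield localizations in compactly generated triangulated categories, $\Perf(L_\rho(A))$ is the idempotent completion $(\Perf(A)/\tau)^{\natural}$, with $\pi$ factoring as $\Perf(A) \xrightarrow{j} \Perf(A)/\tau \hookrightarrow (\Perf(A)/\tau)^{\natural}$. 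Lemma~\ref{lem:idempotentcompletion} then extends $\tilde{\rho}$ uniquely to a rank function $\overline{\rho}$ on $\Perf(L_\rho(A))$, and the pullback relation $\pi^{*}\overline{\rho} = \rho$ follows by composing the two factors.

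The main obstacle is establishing object-faithfulness of $\overline{\rho}$. Every nonzero object of $\Perf(L_\rho(A))$ is isomorphic to a pair $(X, e)$ with $X$ in the image of $j$ and $e\colon X \to X$ a nonzero idempotent, and $\overline{\rho}((X, e)) = \tilde{\rho}(e)$ by the construction of Lemma~\ref{lem:idempotentcompletion}. Object-faithfulness thus reduces to showing that no nonzero idempotent in $\Perf(A)/\tau$ has zero $\tilde{\rho}$-rank. The plan here is to argue by contradiction: a nonzero idempotent $e$ with $\tilde{\rho}(e) = 0$ splits in $\Perf(L_\rho(A))$ as a nonzero $\overline{\rho}$-zero direct summand $M$ of some $\pi(N)$, $N \in \Perf(A)$. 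Using the homological epimorphism property of $A \to L_\rho(A)$ (Corollary~\ref{cor:homotepi2}) to control the idempotent lifting back to $\Perf(A)$, one produces a thick subcategory of $\Perf(A)$ strictly larger than $\tau$ yet still contained in $\Ker(\rho)$, contradicting $\tau = \Ker(\rho)$. The technical heart of the argument lies in this lifting step, where the machinery on derived localization from Section~\ref{section:localization} is brought to bear.
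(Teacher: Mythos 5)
Your proof follows the same skeleton as the paper's: descend $\rho$ to a rank function $\tilde{\rho}$ on $\Perf(A)/\Ker(\rho)$ via Corollary~\ref{cor:quotient}, identify $\Perf(L_\rho(A))$ with the idempotent completion of this Verdier quotient (Neeman/Thomason), and extend via Lemma~\ref{lem:idempotentcompletion}. You are also right that the paper's ``Clearly, $\overline{\rho}$ has the required properties'' elides a small verification: object-faithfulness does not automatically pass to the idempotent completion, since Lemma~\ref{lem:idempotentcompletion} does not assert that the extension of an object-faithful rank function remains object-faithful.

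However, the argument you sketch for closing this gap will not work. Idempotents in a Verdier quotient need not lift to the source category --- that failure is precisely why one must pass to the idempotent completion --- and the homological epimorphism property of $A \to L_\rho(A)$ gives no handle on such a lifting. Moreover the conclusion you aim for, a thick subcategory of $\Perf(A)$ strictly containing $\Ker(\rho)$ yet still contained in $\Ker(\rho)$, is vacuous as stated. The correct argument is far simpler and stays entirely inside $\Perf(A)/\Ker(\rho)$. Suppose $(X,e)$ is a nonzero object of the idempotent completion with $\overline{\rho}\bigl((X,e)\bigr) = \tilde{\rho}(e) = 0$, where $e\colon X\to X$ is a nonzero idempotent. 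Since $\tilde{\rho}(e)=0$, Lemma~\ref{lem:additive} gives $\tilde{\rho}(\id_X) = \tilde{\rho}\bigl((1-e)+e\bigr) = \tilde{\rho}(1-e)$, so $1-e$ is $\tilde{\rho}$-full. By Lemma~\ref{lem:full}(1), $\tilde{\rho}(\cone(1-e)) = 0$, and since $\tilde{\rho}$ is object-faithful on $\Perf(A)/\Ker(\rho)$, we get $\cone(1-e)=0$, i.e.\ $1-e$ is invertible. An invertible idempotent equals the identity, so $e=0$, a contradiction. This is the content that the paper's ``clearly'' is waving at, and it replaces the unworkable idempotent-lifting step in your plan.
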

\begin{proof}
By Corollary \ref{cor:quotient}, $\rho$ descends to an object-faithful rank function  on the Verdier quotient $\Perf(A)/\Ker(\rho)$.  Note that $ \Perf(L_\rho(A))$ is the idempotent completion of $\Perf(A)/\Ker(\rho)$ by \cite[Theorem 2.1]{Neema92} and so the obtained rank function on $\Perf(A)/\Ker(\rho)$ extends further to $\overline{\rho}$ on $ \Perf(L_\rho(A))$ by Lemma \ref{lem:idempotentcompletion}. Clearly, $\overline{\rho}$ has the required properties.
\end{proof}
\begin{rem}
The above theorem is an extension of the corresponding statement for Sylvester rank functions, \cite[Theorem 7.4]{Sch86}. This is a key result in theory of Sylvester  rank functions and its proof in op.cit. is very involved. The almost trivial proof of the much more general result above demonstrates the advantage of the notion of a rank function for triangulated categories over the classical notion. 
\end{rem}
\subsection{Loops on p-completions of topological spaces} Another example of derived localization in topology comes from the study of chain algebras of based loops on completed classifying spaces of finite groups, cf. \cite{CohenLevi96, Benson09}.  Let $X$ be a topological space such that $\pi_1(X)$ is finite (e.g. the classifying space of a finite group) and $X^{\wedge}_p$ is the $p$-completion of $X$. Then it is proved in \cite{ChuangLazarev20} that there is an idempotent $e\in\mathbb{F}_p[\pi_1(X)]$  such that the derived localization $L_e\mathbb{F}_p[\pi_1(X)]$ is quasi-isomorphic, as a dg algebra, to $C_*\Omega(X^{\wedge}_p)$, the chain algebra of the based loop space of $X^{\wedge}_p$.
\begin{rem}
When $X$ is the classifying space of a finite group, this result (in a somewhat different formulation) was proved in the paper \cite{Vogel17} where a good portion of derived localization theory was also developed. Unfortunately, the present authors had not been aware of this earlier work and did not make a proper attribution to it in \cite{BCL, ChuangLazarev20}. 
\end{rem}

\section{Localizing rank functions and fraction fields}\label{section:localizing}
We will start with the following almost obvious result.
\begin{prop}\label{prop:localizing} Let $f:X\to Y$ be a morphism in a triangulated category $\C$ supplied with a rank function $\rho$ such that $\rho(f)=0$. Then the following 
conditions are equivalent:
\begin{enumerate}
\item The morphism $f$ factors through an object of rank zero.
\item There exists an object $Z$ in $\C$ and $\rho$-full morphism $g:Z\to X$ such that $f\circ g=0$.
\item There exists and object $W$ in $\C$ and a $\rho$-full morphism $h:Y\to W$ such that $h\circ f=0$.
\item The morphism $f$ maps to zero under the Verdier quotient map $\C\to\C/\ker(\rho)$.
\end{enumerate}
\end{prop}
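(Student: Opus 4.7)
The plan is to prove the chain $(2)\Rightarrow(1)\Rightarrow(3)\Rightarrow(1)\Rightarrow(2)$ together with $(1)\Leftrightarrow(4)$, leaning on Lemma \ref{lem:full}(1) which says that in a triangle $X\xrightarrow{f}Y\xrightarrow{g}Z\rightsquigarrow$, the map $f$ is $\rho$-full iff $\rho(Z)=0$.

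First I would handle $(2)\Rightarrow(1)$ and $(3)\Rightarrow(1)$. Given a $\rho$-full $g\colon Z\to X$ with $fg=0$, complete $g$ to an exact triangle $Z\xrightarrow{g}X\xrightarrow{p}\cone(g)\rightsquigarrow$. Since $fg=0$, the map $f$ factors as $f=\bar f\circ p$ for some $\bar f\colon\cone(g)\to Y$, and by Lemma~\ref{lem:full}(1) $\rho$-fullness of $g$ forces $\rho(\cone(g))=0$, yielding the factorization through an object of rank zero. Symmetrically, given $h\colon Y\to W$ $\rho$-full with $hf=0$, form the triangle $\Sigma^{-1}W\xrightarrow{q}Y\xrightarrow{h}W\rightsquigarrow$; then $f$ factors as $q\circ\tilde f$ for some $\tilde f\colon X\to\Sigma^{-1}W$, and $\rho(\Sigma^{-1}W)=\rho(W)=0$ by \eqref{O1} and Lemma~\ref{lem:full}(1).

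Next, for $(1)\Rightarrow(2)$ and $(1)\Rightarrow(3)$, write $f=\beta\circ\alpha$ with $\alpha\colon X\to U$, $\beta\colon U\to Y$, and $\rho(U)=0$. Consider the exact triangles
\[
\Sigma^{-1}U\xrightarrow{\ g\ } X\xrightarrow{\alpha} U\rightsquigarrow \qquad\text{and}\qquad U\xrightarrow{\beta} Y\xrightarrow{\ h\ } W\rightsquigarrow.
\]
Then $fg=\beta\alpha g=0$ and $hf=h\beta\alpha=0$. Moreover the cone of $g$ is $U$ (so $\rho$-fullness of $g$ follows from Lemma~\ref{lem:full}(1) since $\rho(U)=0$), and the cone of $h$ is $\Sigma U$ which also has rank $0$, so $h$ is $\rho$-full. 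Thus (2) and (3) both hold.

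Finally, the equivalence $(1)\Leftrightarrow(4)$ is a standard property of Verdier quotients by a thick subcategory: a morphism in $\C$ becomes zero in $\C/\ker(\rho)$ precisely when it factors through an object of $\ker(\rho)$ (one direction is immediate, the other is obtained by writing the zero morphism as a roof and applying the octahedral axiom to absorb the intermediate object into $\ker(\rho)$). I expect no serious obstacle; the only point worth being careful about is the symmetry between (2) and (3), which relies on the translation invariance \eqref{O1} to pass from $\rho(U)=0$ to $\rho(\Sigma U)=0$ so that the cones of $g$ and $h$ both lie in $\ker(\rho)$.
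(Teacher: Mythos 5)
Your proof is correct and takes essentially the same approach as the paper's: the equivalences $(1)\Leftrightarrow(2)$ and $(1)\Leftrightarrow(3)$ via Lemma \ref{lem:full} (rotating triangles and using translation invariance \eqref{O1} to identify the relevant cone as an object of $\Ker(\rho)$), and the link to $(4)$ via the standard Verdier-quotient characterization. The only cosmetic difference is that the paper phrases the quotient fact as the calculus-of-fractions criterion connecting $(2),(3)$ to $(4)$, whereas you invoke the equivalent ``zero iff factors through the thick subcategory'' statement connecting $(1)$ to $(4)$; both are standard and equivalent restatements of the same fact.
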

\begin{proof} The equivalence of (1) with (2) and (3) follows from Lemma \ref{lem:full} and the equivalence of (2) and (3) with (4) follows from the characterization of morphisms in a Verdier quotients in terms of left or right fractions.
\end{proof}
\begin{defi}
An integral rank function  on a triangulated category $\mathcal{C}$ is {\em localizing} if any morphism in $\C$ of rank zero 
satisfies either of the equivalent conditions of Proposition \ref{prop:localizing}. 
\end{defi}
\begin{rem}
The notion of a localizing rank function is motivated by constructing derived localization of dg rings to dg simple Artinian rings. If one is interested in maps into dg algebras more general than dg simple Artinian rings (e.g. dg analogues of von Neumann regular rings), one can speculate that real-valued rank functions will be relevant. In this context perhaps it is more natural to require that any rank 0 morphism factors through objects of arbitrarily small rank. We will not elaborate on this more subtle notion in the present paper however.
\end{rem}

\begin{theorem}\label{thm:category-rank-bijection}
Let $\mathcal{C}$ be a triangulated category admitting a generator. Then there is a bijection between the following two sets:
\begin{itemize}
	\item localizing prime rank functions;
	\item thick subcategories of $\mathcal{C}$ with a simple Verdier quotient.
	\end{itemize}
	\end{theorem}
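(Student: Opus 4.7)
The plan is to construct the bijection by sending a localizing prime rank function $\rho$ to the thick subcategory $\ker(\rho)$, and conversely sending a thick subcategory $\mathcal{U}$ with simple Verdier quotient $\C/\mathcal{U}$ to the pullback along $j\colon \C\to\C/\mathcal{U}$ of the unique prime rank function on $\C/\mathcal{U}$ provided by Corollary~\ref{cor:simplerank}.

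First I would verify that the forward map lands in the target. By Corollary~\ref{cor:quotient}, $\rho$ descends to an object-faithful rank function $\overline{\rho}$ on $\C/\ker(\rho)$. Since $\C$ admits a generator $X$ with $\rho(X)=1$, its image $j(X)$ is a generator of $\C/\ker(\rho)$ with $\overline{\rho}(j(X))=1$, so $\overline{\rho}$ is prime. The localizing condition, interpreted through part (4) of Proposition~\ref{prop:localizing}, says precisely that any morphism of rank $0$ in $\C$ becomes zero in $\C/\ker(\rho)$; hence $\overline{\rho}$ is morphism-faithful. By Proposition~\ref{prop:faithfulsimple}, $\C/\ker(\rho)$ is simple.

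Next I would verify that the backward map lands in the target. Given a thick subcategory $\mathcal{U}$ with $\C/\mathcal{U}$ simple, denote by $\sigma$ the unique prime rank function on $\C/\mathcal{U}$ (cf.\ Corollary~\ref{cor:simplerank}) and set $\rho:=j^{*}\sigma$. Any generator $X$ of $\C$ maps to a generator of $\C/\mathcal{U}$, so $\rho(X)=\sigma(j(X))$ is a positive integer; dividing $\sigma$ by this integer (using the freedom in the choice of prime rank function on a simple category only up to the generator used) we may arrange $\rho(X)=1$, so $\rho$ is prime and integral. I would then show $\ker(\rho)=\mathcal{U}$: objects of $\mathcal{U}$ become zero under $j$ hence have rank zero, while $\rho(Y)=0$ forces $\sigma(j(Y))=0$, and since $\sigma$ is object-faithful on the simple category $\C/\mathcal{U}$, we get $j(Y)\cong 0$, i.e.\ $Y\in\mathcal{U}$. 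The localizing property follows similarly: any rank-zero morphism $f$ in $\C$ pushes to a morphism in $\C/\mathcal{U}$ with $\sigma$-value zero, hence is zero in the simple quotient by the morphism-faithfulness given by Proposition~\ref{prop:faithfulsimple}, so condition (4) of Proposition~\ref{prop:localizing} holds.

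Finally I would verify mutual inverseness. Starting from a thick $\mathcal{U}$ with simple quotient, the above already gives $\ker(j^{*}\sigma)=\mathcal{U}$. Starting from a localizing prime $\rho$, the descended $\overline{\rho}$ is a prime rank function on the simple category $\C/\ker(\rho)$; by uniqueness (Corollary~\ref{cor:simplerank}), $\overline{\rho}$ coincides with the distinguished prime rank function there, so $j^{*}\overline{\rho}=\rho$. The main subtle point, and the step I expect to require the most care, is the pinning down of the normalization in the backward construction so that the prime condition is preserved under pullback; concretely, one must observe that the specific prime rank function on $\C/\mathcal{U}$ that corresponds to $\mathcal{U}$ is the one normalized by the image of a chosen generator of $\C$, and that any two choices of generator produce the same localizing rank function. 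Once this bookkeeping is done, both compositions are the identity, and the bijection is established.
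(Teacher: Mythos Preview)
Your approach is the same as the paper's: both send $\rho\mapsto\ker(\rho)$ in one direction and pull back the unique prime rank function on the simple quotient (Corollary~\ref{cor:simplerank}) in the other, invoking Corollary~\ref{cor:quotient}, Proposition~\ref{prop:localizing}(4), and Proposition~\ref{prop:faithfulsimple} at the same junctures.

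There is, however, a genuine error in your treatment of the normalization in the backward map. You propose to divide $\sigma$ by the integer $n=\sigma(j(X))$ so as to force $\rho(X)=1$ for a chosen generator $X$ of $\C$, and then assert that the result is ``prime and integral''. It is not. The prime rank function $\sigma$ on the simple category $\C/\mathcal U$ takes the value~$1$ on the indecomposable generator $Z$; since the Verdier quotient functor $j$ is essentially surjective, there exists $X_0\in\C$ with $j(X_0)\cong Z$, and after your rescaling one gets $\rho(X_0)=1/n$, which is not an integer when $n>1$. There is also no ``freedom in the choice of prime rank function'' on a simple $1$-periodic category to exploit: Corollary~\ref{cor:simplerank} gives uniqueness outright.

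The paper avoids this by not rescaling at all: it pulls back $\sigma$ as is. Primality of the pullback then comes down to exhibiting a generator of $\C$ of rank~$1$, a point the paper passes over silently. One clean way to supply it: with $X_0$ as above and $G$ any generator of $\C$, the isomorphism $j(G)\cong\bigoplus_i\Sigma^{k_i}Z\cong j\bigl(\bigoplus_i\Sigma^{k_i}X_0\bigr)$ in $\C/\mathcal U$ is represented by a roof $G\xleftarrow{s}G'\xrightarrow{f}\bigoplus_i\Sigma^{k_i}X_0$ with $\cone(s),\cone(f)\in\mathcal U$, whence $G\in\langle X_0,\cone(s),\cone(f)\rangle$. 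Thus $X_0\oplus\cone(s)\oplus\cone(f)$ is a generator of $\C$ with $\rho$-value $1+0+0=1$, and the pullback is prime without any division. Replacing your rescaling paragraph with this observation fixes the argument.
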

\begin{proof}
	Let $\tau$  be a thick subcategory of $\mathcal{C}$ such that $\mathcal{C}/\tau$ is simple. Fix an indecomposable object $X$ in $\mathcal{C}/\tau$. Then there is a unique morphism-faithful rank function on $\mathcal{C}/\tau$ taking value $1$ on $X$, see Corollary \ref{cor:simplerank}.  The pullback of this rank function to $\mathcal{C}$ is a localizing rank function on $\mathcal{C}$.
	
	Conversely, given a  localizing rank function $\rho\colon \mathcal{C} \to \Z$, let $\overline{\rho}$
 be the rank function on $\C/\Ker(\rho)$ induced by $\rho$; since $\rho$ is localizing, $\overline{\rho}$ is morphism faithful and $\overline{\rho}(X)=1$ for some generator $X$. By Proposition \ref{prop:faithfulsimple}, $\C/\Ker(\rho)$ is simple.
	
	It is clear that the two processes described define mutually inverse maps between the two sets in the statement of the theorem. 
	\end{proof}
The following result gives a complete description of homotopy classes of derived localizations of dg algebras into dg skew-fields or, more generally, dg simple Artinian rings, in terms of rank functions. Classically, only partial results of this sort were available (e.g. for a very specific class of rings or a particular class of localizations), cf. \cite[Theorems 5.4, 5.5]{Sch86} and \cite[Theorem 4.6.14]{Cohn95}.

\begin{theorem}\label{thm:algebra-rank-bijection}
	Let $A$ be a dg algebra. Then there is a bijection between the following two sets:
	\begin{itemize}
	\item  localizing prime rank functions on $\Perf(A)$.
	\item equivalence classes of homotopy classes of finite homological epimorphisms $A\to B$ into simple Artinian dg algebras $B$.
	\end{itemize} 
Moreover, $\rho(A)=1$ if and only if $B$ is a dg skew-field.
	\end{theorem}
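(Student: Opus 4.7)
The plan is to compose two bijections already established in this paper. By Theorem \ref{thm:category-rank-bijection} applied to $\C = \Perf(A)$ (which admits $A$ as a generator), localizing prime rank functions on $\Perf(A)$ correspond bijectively to thick subcategories $\tau \subseteq \Perf(A)$ whose Verdier quotient $\Perf(A)/\tau$ is simple. By Corollary \ref{cor:thick}, such thick subcategories correspond bijectively to equivalence classes of homotopy classes of finite homological epimorphisms $A \to L_\tau A$. It then remains to match up, under this composite correspondence, the condition ``$\Perf(A)/\tau$ is simple'' with the condition ``$L_\tau A$ is simple Artinian'', and to verify the final assertion about $\rho(A)=1$.

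For the matching, the main observation I would establish is that any simple triangulated category is automatically idempotent complete. Indeed, by Proposition \ref{prop:simplefield} such a category is equivalent to $\Perf(K)$ for a graded skew-field $K$, and any idempotent endomorphism of a finite-dimensional graded $K$-vector space splits by elementary linear algebra over $K$. Recalling that $\Perf(L_\tau A)$ is the idempotent completion of $\Perf(A)/\tau$ (as already invoked in Section \ref{section:localization}), I would deduce: if $\Perf(A)/\tau$ is simple, then it coincides with $\Perf(L_\tau A)$, which is therefore simple, and Proposition \ref{prop:simple}(2) gives that $L_\tau A$ is simple Artinian. Conversely, if $L_\tau A$ is simple Artinian, then $\Perf(L_\tau A)$ is simple by Proposition \ref{prop:simple}(1), hence already idempotent complete, so the dense inclusion of $\Perf(A)/\tau$ into $\Perf(L_\tau A)$ is forced to be an equivalence; thus $\Perf(A)/\tau$ is simple.

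For the final claim, the composite bijection sends a localizing prime rank function $\rho$ to the pullback, along $A \to L_\tau A$, of the unique prime rank function on $\Perf(L_\tau A)$, which assigns rank $1$ to the simple module $V$. Writing $L_\tau A$ with $H(L_\tau A) \cong M_n(K)$ for a graded skew-field $K$, the module $L_\tau A$ decomposes as $V^{\oplus n}$ in $\Perf(L_\tau A)$, so $\rho(A) = n$; hence $\rho(A) = 1$ exactly when $n = 1$, i.e., when $L_\tau A$ is a dg skew-field.

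The hardest part is arguably the identification $\Perf(A)/\tau \simeq \Perf(L_\tau A)$ in the simple case, which hinges on idempotent completeness of simple triangulated categories. Without this observation, one would only obtain a bijection involving categories ``simple after idempotent completion'', which would not yield a direct correspondence with simple Artinian dg algebras on the nose.
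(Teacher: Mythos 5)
Your overall strategy --- composing the bijection of Theorem \ref{thm:category-rank-bijection} with Corollary \ref{cor:thick} and then matching ``simple Verdier quotient'' against ``simple Artinian localization'' via idempotent completion --- is exactly the route the paper takes. Your forward direction is sound, and in fact slightly more careful than the paper's phrasing: the paper simply asserts that the idempotent completion of a simple triangulated category is simple, whereas you isolate and justify the stronger fact (via Proposition \ref{prop:simplefield}) that a simple triangulated category is already idempotent complete, so that $\Perf(A)/\tau$ literally coincides with $\Perf(L_\tau A)$ when the former is simple. Your computation of $\rho(A)$ for the final assertion also matches the paper's.

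The genuine gap is in the converse direction. You write that, since $\Perf(L_\tau A)$ is idempotent complete, ``the dense inclusion of $\Perf(A)/\tau$ into $\Perf(L_\tau A)$ is forced to be an equivalence.'' This does not follow. The idempotent completion of \emph{any} additive category is idempotent complete by construction, so observing that $\Perf(L_\tau A)$ is idempotent complete gives no information about whether the inclusion from $\Perf(A)/\tau$ is essentially surjective. What one needs is that $\Perf(A)/\tau$ \emph{itself} is idempotent complete (equivalently, simple), and Verdier quotients of idempotent complete triangulated categories such as $\Perf(A)$ are notoriously not idempotent complete in general. Concretely, by Thomason's classification, dense strictly full triangulated subcategories of $\Perf(L_\tau A)\simeq\Perf(K)$ correspond to subgroups of $K_0(\Perf(K))\cong\Z$; so the converse hinges on showing that the image of $K_0(\Perf(A)/\tau)$ is all of $\Z$ rather than a proper subgroup $m\Z$, and nothing in your argument rules out the latter. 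It is worth noting that the paper's own proof is also silent on this converse implication: it restricts Corollary \ref{cor:thick} to ``such thick subcategories'' without verifying that every finite homological epimorphism into a simple Artinian dg algebra arises this way, so the omission may be inherited --- but your proposal goes further than the paper by offering an explicit argument for the converse, and that argument, as written, is incorrect.
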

\begin{proof}
By Theorem \ref{thm:category-rank-bijection}  localizing prime rank functions $\rho$ on $\Perf(A)$ correspond bijectively to thick subcategories on $\Perf(A)$ with a simple Verdier quotient. For such a thick subcategory $\tau$ the image of $A$ in $\Perf(A)$ is a generator $M$ of $\Perf(A)/\tau$ that we can assume without loss of generality to be cofibrant over $A$. Since $ \Perf(L_\tau(A))$ is the idempotent completion of $\Perf(A)/\tau$ and $\Perf(A)/\tau$ is simple, $ \Perf(L_\tau(A))$ is also simple and it follows by Proposition \ref{prop:simple} that  $L_\tau(A)$ is a simple Artinian dg algebra. By Corollary \ref{cor:thick} such thick subcategories correspond bijectively with the equivalence classes of homotopy classes of finite homological epimorphisms $A\to L_\tau(A)$. Finally, the condition $\rho(A)=1$ means that $M$ is an indecomposable object of $\Perf(A)/\tau$ and in this case $H(L_\tau(A))\cong \End_{\Perf(A)/\tau}(M,M)$ must be a graded skew-field.
	\end{proof}
\begin{example}
Let $A$ be the 4-dimensional algebra of \S\ref{smallexample}. The localization map $A\to L_{\tau}(A)\simeq M_2(\ground)$ induces a functor $\Perf(A)\to\Perf(M_2(\ground))$ where the target triangulated category is simple. The induced rank function on $\Perf(A)$ is localizing. 

On the other hand, consider the localization map 
$A\to L_{e_1}(A)\simeq \ground[\alpha]/\alpha^2$. 
The category $\Perf(\ground[\alpha]/\alpha^2)$ is not simple, but composing with the augmentation $\ground[\alpha]/\alpha^2\to\ground$, we obtain
a map $A\to \ground$. Note that the latter map is the Cohn-Schofield (nonderived) localization of $A$. The induced rank function (which corresponds to a certain Sylvester rank function for $A$) is \emph{not} localizing.
\end{example}
\subsection{Localizing rank functions for hereditary rings}
Localizing rank functions are easiest to construct for perfect derived categories of (right)-hereditary algebras in which case 
they essentially reduce to the nonderived notion.

Let $A$ be an ordinary $\ground$-algebra and $K_0(A)$ be its Grothendieck group of its category of finitely generated projective modules. The abelian group $K_0(A)$ has a pre-order specified by the declaring the classes of finitely generated projective modules in $K_0(A)$ to be positive. Then we have the notion of a projective rank function on $A$ cf. \cite{Sch86}. 
\begin{defi}
	A projective rank function on $A$ is homomorphism of pre-ordered groups $\rho:K_0(A)\to\R$ for which $\rho[A]=1$.
\end{defi}
A Sylvester rank function on $A$ (which, by Theorem \ref{thm:Sylvesterderived} is equivalent to a normalized $\infty$-periodic rank function on $\Perf(A)$) restricts to a function on the positive cone of $K_0(A)$ with values in nonnegative real numbers and thus, determines a projective rank function on $A$. One can ask whether, conversely, one can associate to a projective rank function on $A$ a Sylvester rank function on $A$. For this, a rank needs to be assigned to any map between two finitely generated projective $A$-modules. One can attempt the following definition.
\begin{defi}
Let $A$ be a ring with a projective rank function $\rho$.	Given a map $f:P\to Q$ between two finitely generated projective $A$-modules,  its \emph{inner} rank $\rho(f)$ is defined as $\rho(f):=\inf(\rho(S))$ where $S$ ranges through finitely generated projective $A$ modules through which $f$ factors.
\end{defi}
There is no reason for an inner rank function to be Sylvester in general. However, this holds in one important special case.
\begin{prop}\label{prop:hereditary}
	Let $A$ be a hereditary algebra with a projective rank function $\rho$. Then the associated inner rank function is Sylvester.
\end{prop}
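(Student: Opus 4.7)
The plan is to verify the four axioms (m1)--(m4) of Definition~\ref{def:Sylvester-morphism} directly for the inner rank $\rho(f) := \inf\{\rho(S) : f \text{ factors through } S\}$, exploiting the key consequence of the hereditary hypothesis: every submodule of a finitely generated projective module is again finitely generated projective, so every short exact sequence of finitely generated modules whose right-hand term is projective splits.

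Normalization (m1) and the ideal condition (m4) are essentially formal. For (m1), a factorization $\id_A = \beta\alpha$ through $S$ exhibits $A$ as a direct summand of $S$, so $\rho(S) \geq \rho(A)=1$, with equality when $S=A$. For (m4), a factorization of $f$ through $S$ yields one of $gf$ through the same $S$ by post-composition with $g$, and symmetrically for $g$.

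The core of the proof is the triangular inequality (m3); specialized to $h=0$ it also delivers additivity (m2). Given a factorization
\[
\begin{pmatrix} f & h \\ 0 & g \end{pmatrix} = \beta\alpha\colon P_1 \oplus P_2 \xrightarrow{\alpha} S \xrightarrow{\beta} Q_1 \oplus Q_2
\]
through a finitely generated projective $S$, I would form $\beta_2 := \pi_2\circ\beta\colon S \to Q_2$. Its image is a submodule of the projective $Q_2$ and is therefore projective, so the surjection $S \twoheadrightarrow \im(\beta_2)$ splits. This gives a decomposition $S = K \oplus S_2$ with $K = \ker(\beta_2)$ and $S_2 \cong \im(\beta_2)$, both finitely generated projective. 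The lower-left zero of the matrix forces $\beta_2\alpha|_{P_1}=0$, so $\alpha(P_1) \subseteq K$, and $f = (\pi_1\beta)|_K \circ \alpha|_{P_1}$ factors through $K$; similarly the $S_2$-component of $\alpha|_{P_2}$, composed with the embedding $\beta_2|_{S_2}\colon S_2 \xrightarrow{\sim} \im(\beta_2) \hookrightarrow Q_2$, recovers $g$ and factors it through $S_2$. Thus $\rho(S) = \rho(K) + \rho(S_2) \geq \rho(f) + \rho(g)$, and taking the infimum over factorizations yields (m3).

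The principal obstacle is (m3), and the hereditary hypothesis is invoked in exactly one step: the projectivity of $\im(\beta_2)$, which permits the splitting $S = K \oplus S_2$. Over a non-hereditary ring this splitting is unavailable, so the block-triangular shape of the matrix can no longer be leveraged to separate the factorization problems of $f$ and $g$ along complementary projective summands.
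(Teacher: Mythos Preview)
Your proof is correct. The paper takes a less direct route, invoking three results from Schofield's monograph: first, that the inner rank on a hereditary (or weakly semihereditary) ring satisfies the \emph{law of nullity} (if $\beta\alpha=0$ with $\alpha\colon P_0\to P_1$ and $\beta\colon P_1\to P_2$, then $\rho(\alpha)+\rho(\beta)\leq\rho(P_1)$), and then two lemmas deducing (m2) and (m3) from the law of nullity. Your argument bypasses the law of nullity entirely and proves (m3) by an explicit splitting of the intermediate projective $S$, which is more self-contained. The paper's detour through nullity is arguably more modular---the law of nullity is a clean intermediate statement that isolates exactly what the hereditary hypothesis contributes---but your approach is shorter and needs no external references.

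Two minor remarks. First, your opening claim that ``every submodule of a finitely generated projective module is again finitely generated projective'' is stronger than what hereditary rings guarantee in general (submodules need not be finitely generated over non-Noetherian rings); but you only ever use that the \emph{finitely generated} image $\im(\beta_2)$ is projective, which is exactly what hereditary gives, so the argument itself is fine. Second, specializing (m3) to $h=0$ yields only the inequality $\rho(f\oplus g)\geq\rho(f)+\rho(g)$; for (m2) you should also note that combining factorizations of $f$ through $S_1$ and $g$ through $S_2$ gives a factorization of $f\oplus g$ through $S_1\oplus S_2$, whence $\rho(f\oplus g)\leq\rho(f)+\rho(g)$.
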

\begin{proof}
We prove the desired statement by quoting relevant results of \cite{Sch86} (all of which are elementary and easy). Firstly, by \cite[Theorem 1.11]{Sch86} the Sylvester law of nullity holds for $\rho$; that is if $\alpha:P_0\to P_1$ and $\beta:P_1\to P_2$ are  two maps between finitely generated projective $A$-modules for which $\beta\circ \alpha=0$ then $\rho(\alpha)+\rho(\beta)\leq \rho(P_1)$.

Next, the law of nullity implies axioms (\ref{m2}) and (\ref{m3}) of the Sylvester morphism rank functions, by \cite[Lemmata 1.14, 1.15]{Sch86}. Finally, the axioms (\ref{m1}) and (\ref{m4}) are immediate from the definition.
\end{proof}
\begin{rem}
As is clear from the above proof, Proposition \ref{prop:hereditary} is essentially contained in \cite{Sch86}, although it is not explicitly formulated in op. cit. as such. Moreover, the statement of the proposition holds under the weaker assumption that $A$ be weakly semihereditary (as opposed to hereditary). We will not need this stronger result.	
\end{rem}
\begin{cor}
	Let $A$ be a hereditary algebra with a projective rank function $\rho$. Then $\rho$ extends uniquely to a rank function on $\Perf(A)$.
\end{cor}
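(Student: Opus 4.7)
The plan is to chain together Proposition \ref{prop:hereditary} with Proposition \ref{prop:extendrank}. First, Proposition \ref{prop:hereditary} produces from the projective rank function $\rho$ a Sylvester rank function on $\proj(A)$, namely its associated inner rank function. Then Proposition \ref{prop:extendrank} extends this Sylvester rank function uniquely, via formula (\ref{eq:extendrank}), to a normalized $\infty$-periodic rank function on $\Perf(A)$. This gives existence.

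For uniqueness, I would observe that any rank function on $\Perf(A)$ extending $\rho$ restricts (by Theorem \ref{thm:Sylvesterderived}) to a Sylvester rank function $\sigma$ on $\proj(A)$ with $\sigma(\id_P) = \rho(P)$ for all finitely generated projective $P$; and by Proposition \ref{prop:extendrank} the rank function on $\Perf(A)$ is in turn determined by $\sigma$. Hence the task reduces to showing that, on a hereditary algebra, any Sylvester rank function is determined by its restriction to $K_0(A)$.

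The main (and essentially only) point is the following factorization argument, which is where hereditarity is used beyond its role in Proposition \ref{prop:hereditary}. For $f : P \to Q$ in $\proj(A)$, the image $\im(f)$ is a submodule of the projective module $Q$, hence projective, so $f$ factors as $P \xrightarrow{p} \im(f) \xrightarrow{i} Q$ with $p$ a split epimorphism (admitting a section $s$) and $i$ a split monomorphism (admitting a retraction $r$). Applying the ideal condition (\ref{m4}) together with Lemma \ref{lem:boundmorphismrank} to the identities $\id_{\im(f)} = p \circ s = r \circ i$, one obtains $\sigma(p) = \sigma(i) = \sigma(\id_{\im(f)}) = \rho(\im(f))$. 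Combining $\sigma(f) = \sigma(i \circ p) \leq \sigma(p)$ with $\sigma(i) = \sigma(f \circ s) \leq \sigma(f)$ then forces $\sigma(f) = \rho(\im(f))$, a formula that recovers $\sigma$ entirely from $\rho$. The argument is a direct concatenation of the two preceding results with this elementary splitting, and no genuine obstacle is expected; the only subtlety to watch is keeping track that the factorization above yields both bounds $\sigma(f) \leq \rho(\im(f))$ and $\sigma(f) \geq \rho(\im(f))$ simultaneously.
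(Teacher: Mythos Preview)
Your existence argument matches the paper's, except that you stop at the $\infty$-periodic rank function while the paper adds one more sentence reducing it (via $\pi_{\infty,1}$) to an ordinary (1-periodic) rank function, which is what the statement asks for.

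The paper's proof, however, does \emph{not} actually argue uniqueness; the word ``uniquely'' in the statement should be read as referring to the canonicity of the construction (inner rank, then the unique $\infty$-periodic extension from Proposition~\ref{prop:extendrank}), not as the claim that every rank function on $\Perf(A)$ restricting to $\rho$ on $K_0(A)$ coincides with this one. Your attempt to prove that stronger statement contains a genuine error and, in fact, the statement is false.

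The error is the assertion that $i:\im(f)\hookrightarrow Q$ is a split monomorphism. Hereditarity guarantees that $\im(f)$ is projective, which is why $p$ splits, but a projective submodule of a projective need not be a direct summand: for $A=k[x]$ and $f:A\to A$ multiplication by $x$, the inclusion $(x)\hookrightarrow A$ is not split. Once this step fails you cannot conclude $\sigma(i)=\rho(\im(f))$, and indeed the conclusion you are aiming for---that a Sylvester rank function on a hereditary algebra is determined by its underlying projective rank function---is false. On $A=k[x]$ the inner rank and the Sylvester rank pulled back along $k[x]\to k$, $x\mapsto 0$, both satisfy $\sigma(\id_A)=1$, yet assign ranks $1$ and $0$ respectively to multiplication by $x$; they yield distinct rank functions on $\Perf(A)$.

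A secondary point: your appeal to Proposition~\ref{prop:extendrank} to say that a 1-periodic rank function is determined by its Sylvester restriction is a miscitation, since that proposition is about $\infty$-periodic rank functions. The conclusion does hold for hereditary $A$ (every object of $\Perf(A)$ is a sum of shifted two-term complexes, and the rank of a two-term complex is recovered from the Sylvester data via the rank--nullity formula), but this is the content of the Remark following the Corollary, not of Proposition~\ref{prop:extendrank}.
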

\begin{proof}
The inner rank function associated to $\rho$ is Sylvester by Proposition \ref{prop:hereditary}. Next, Proposition \ref{prop:extendrank} implies that a Sylvester rank function gives rise to an $\infty$-periodic rank function on $\Perf(A)$ and the latter determines, by reduction, a (1-periodic) rank function on $\Perf(A)$.
\end{proof}	
\begin{rem}
For a hereditary algebra $A$ there is a 1-1 correspondence between rank functions and $\infty$-periodic rank functions on $\Perf(A)$. Indeed, this is straightforward to check on complexes of finitely generated projective $A$-modules of length 2 using formula (\ref{eq:extendrank}) and the hereditary property implies that such complexes generate the whole triangulated category $\Perf(A)$.
\end{rem}
\begin{prop}
Let $A$ be a hereditary algebra and $\rho$ be a rank function on $\Perf(A)$ associated, as above, with a projective rank function on $A$. Then $\rho$ is localizing.
\begin{proof}The rank function $\rho$ is localizing if and only if the derived localization $L_{\rho}A$ of $A$ at $\Ker\rho$ is a dg skew-field by Theorem \ref{thm:algebra-rank-bijection} and by Corollary \ref{cor:hereditary} $L_{\rho}A$ is quasi-isomorphic to the underived localization of $A$ at the collection of $\rho$-full maps between finitely generated $A$-modules. By \cite[Theorem 5.4]{Sch86} this underived localization is a skew-field and we are done.
\end{proof} 
\end{prop}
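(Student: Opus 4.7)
The strategy is to apply Theorem~\ref{thm:algebra-rank-bijection}: a prime rank function on $\Perf(A)$ is localizing if and only if the derived localization $L_\rho(A) := L_{\Ker(\rho)}(A)$ is a simple Artinian dg algebra, and moreover $\rho(A)=1$ corresponds to $L_\rho(A)$ being a dg skew-field. Since the input projective rank function is normalized with $\rho[A]=1$, it suffices to prove that $L_\rho(A)$ is a dg skew-field.

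The hereditary hypothesis enters via Corollary~\ref{cor:hereditary}, which identifies derived localization with classical Cohn-Schofield localization: for any thick subcategory $\tau \subseteq \Perf(A)$, $L_\tau(A)$ is quasi-isomorphic to the ordinary ring-theoretic localization of $A$ at any collection of morphisms between finitely generated projectives that generates $\tau$. I would next verify that $\Ker(\rho)$ is generated, as a thick subcategory of $\Perf(A)$, by the cones of $\rho$-full maps between finitely generated projective $A$-modules. This relies on two facts about hereditary rings: every object of $\Perf(A)$ splits as a direct sum of shifts of length-two complexes of finitely generated projectives (since the cohomology modules have projective dimension at most one and there are no higher $\mathrm{Ext}$ obstructions to splitting the Postnikov tower), and a length-two complex $f\colon P \to Q$ has zero rank precisely when $f$ is $\rho$-full, by the rank-nullity axiom~\eqref{M3} together with the bound $\rho(f)\leq\min\{\rho(P),\rho(Q)\}$ from Lemma~\ref{lem:boundmorphismrank}.

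With this identification, $L_\rho(A)$ is quasi-isomorphic to the Cohn-Schofield localization of $A$ at the class $\Sigma$ of all $\rho$-full maps between finitely generated projective $A$-modules, and then Schofield's \cite[Theorem 5.4]{Sch86} asserts that this classical localization is a skew-field. Consequently $L_\rho(A)$ is quasi-isomorphic to an ordinary skew-field, which is in particular a dg skew-field, and $\rho$ is localizing. The only point I expect to require care is the identification of $\Ker(\rho)$ with the thick subcategory generated by cones of $\rho$-full maps; once this is in hand, the result is essentially an assembly of the preceding structural results, so I do not anticipate substantive obstacles.
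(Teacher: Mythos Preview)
Your proposal is correct and follows essentially the same route as the paper: invoke Theorem~\ref{thm:algebra-rank-bijection} to reduce to showing $L_\rho(A)$ is a dg skew-field, use Corollary~\ref{cor:hereditary} to identify the derived localization with the Cohn--Schofield one, and then cite \cite[Theorem~5.4]{Sch86}. The only difference is that you spell out the identification of $\Ker(\rho)$ with the thick subcategory generated by cones of $\rho$-full maps between finitely generated projectives, a step the paper leaves implicit; your justification via the splitting of perfect complexes over a hereditary ring into shifts of two-term complexes is the right one.
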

A standard application of this result is the construction of the skew-field of fractions of a free algebra. 
\begin{example}
Let $\ground\langle S\rangle$, the free algebra on a set $S$. Then there exists a unique homological epimorphism $A\to K(S)$ where $K(S)$ is a skew-field. Indeed, $A$ is a hereditary algebra with $K_0(A)=\BZ$, thus there exists a unique projective rank on $A$ that uniquely extends to a rank $\rho$ function on $\Perf(A)$. The corresponding derived localization $L_\rho(A)$ (which coincides with the Cohn-Schofield localization since $A$ is hereditary) is therefore a skew-field, commonly known as the \emph{free field} on $S$, \cite{Cohn95}.
\end{example}
\subsection{Derived fields of fractions}
Given an ordinary ring $A$ we will understand a \emph{classical} field of fractions of $A$ to be a (nonderived) localization  $A\to K$ where $K$ is a skew-field. A derived field of fractions of $A$ is a dg skew-field $K$ together with a derived localization map $A\to K$.
\begin{rem}
This definition of a classical field of fractions agrees with the standard notion that ordinarily assumes that $A$ is a commutative domain. In the noncommutative case the definition accepted e.g. in \cite{Cohn95} is different, in particular the map $A\to K$ is assumed to be an embedding. Our definition is one that extends most naturally to the dg context.
\end{rem}
Corollary \ref{thm:algebra-rank-bijection} classifies derived fields of fractions of a ring $A$ in terms of localizing rank functions on $\Perf(A)$.  The following examples demonstrate the existence of such (genuinely derived) fields of fractions and thus, localizing rank functions (that cannot be reduced to classical Sylvester rank functions) even for finite-dimensional algebras over fields.
\begin{example}\
\begin{enumerate}\item
Let $A=\ground[M]$ be the 5-dimensional algebra of Example \ref{ex:fiedorowicz}. We saw that there is a derived localization map
$\ground[M]\to \ground[x]$ where $|x|=1$. Composing this with inverting $x$, we obtain a derived localization map $\ground[M]\to \ground[x, x^{-1}]$ where the target is a graded skew-field of period 1, i.e. a derived fraction field of $A$. The `classical' part of this map is the augmentation map $\ground[M]\to \ground$; this is an underived localization and thus, a classical fraction field in the sense understood above.
\item
This example is essentially contained in \cite[Example 6.13]{ChuangLazarev20}.	Let $\ground$ be a field of characteristic $3$ and consider $A:=\ground[S_3]$, the group algebra of the symmetric group on 3 symbols. Then $A$ contains an idempotent $e$ such that $L_eA$ is (quasi-isomorphic to) the graded algebra generated by two indeterminates $y$ and $z$ with $|z|=2,|y|=3$ and $z^3=y^2$. The latter algebra has a fraction field $k[x, x^{-1}]$ with $x=z^{-1}y$ so $|x|=1$ . Thus, $k[x, x^{-1}]$ is a derived fraction skew-field for $A$.  Again, the `classical' part of the latter derived fraction field is the augmentation map of the group ring $\ground[S_3]$.
\end{enumerate}
\end{example}

\bibliography{biblibrary}
\bibliographystyle{alphaurl}

\end{document}